\documentclass[a4paper, 12pt]{article}

\usepackage[top=1cm, bottom=2cm, left=1cm, right=1cm]{geometry}
\usepackage[parfill]{parskip}    		
\usepackage{hyperref}
\hypersetup{hidelinks}

\usepackage{fontspec}
\usepackage{polyglossia}
\setmainlanguage{english}
\setotherlanguage{russian}

\newfontfamily{\russianfont}{cmunrm.otf}[
    BoldFont = cmunbx.otf,
    ItalicFont = cmunti.otf,
    BoldItalicFont = cmunbi.otf,
    Script = Cyrillic
]

\usepackage{mathtools, amssymb, amsthm}
\allowdisplaybreaks
\usepackage{relsize}
\usepackage{arcs}
\usepackage[mathscr]{eucal}
\usepackage{enumitem}
\setenumerate{
	rightmargin=5pt,      
	labelsep=5pt        
	}
\usepackage{multicol}

\usepackage[
	backend = biber,
	style = numeric,
	bibencoding = utf8,
	giveninits = true,
	url = false,
	doi = false,
	isbn = false,
	eprint = false,
	autolang = langname
	]{biblatex} 
\AtEveryBibitem{\clearlist{language}}
\DefineBibliographyStrings{russian}{in = {//}}

\newtheoremstyle{custom}
	{0pt}
	{0pt}
	{\itshape}
	{\parindent}
	{\bfseries}
	{}
	{\newline}
	{}

\theoremstyle{custom}
\newtheorem{theorem}{Theorem}[subsection]
\newtheorem{lemma}[theorem]{Lemma}
\newtheorem{definition}[theorem]{Definition}
\newtheorem{proposition}[theorem]{Proposition}
\newtheorem{corollary}[theorem]{Corollary}

\renewcommand {\qedsymbol}{$\blacksquare$}
\renewenvironment{proof}{\par\noindent\textbf{Proof:}\\}{\hfill\qedsymbol}

\newenvironment{note}
{%
  \begin{center}%
    {\small\bfseries Note}%
    \vspace{-0.5em}%
  \end{center}%
  \small%
  \quotation
}
{\endquotation}

\newenvironment{keywords}
{%
  \begin{center}%
    {\small\bfseries Keywords:}%
    \vspace{-0.5em}%
  \end{center}%
  \small%
  \quotation
}
{\endquotation}

\newcommand{\Top}{\mathscr{T}}
\newcommand{\diff}{\mathop{}\!d}

\DeclareMathOperator{\supp}{supp}
\DeclareMathOperator{\im}{im}
\DeclareMathOperator*{\esssup}{ess\,sup}
\DeclareMathOperator{\Lift}{\mathbb{L}}
\renewcommand{\restriction}{\mathord{\upharpoonright}}
\newcommand{\CA}{\mathrm{CA}}
\newcommand{\SM}{\mathcal{SM}}

\newcommand{\Sim}{\mathcal{S}}
\newcommand{\V}{\mathbb{V}}
\newcommand{\W}{\mathbb{W}}
\newcommand{\F}{\mathbb{F}}
\newcommand{\A}{\mathcal{A}}
\newcommand{\var}{\mathrm{v}}
\newcommand{\svar}{\mathrm{sv}}
\newcommand{\M}{\mathbb{M}}
\newcommand{\BSM}{\mathcal{BSM}}
\newcommand{\TF}{\mathfrak{F}}
\newcommand{\TM}{\mathfrak{M}}
\newcommand{\pairing}[2]{\left\langle #1 \:\vphantom{\big\vert}\middle\vert\: #2 \right\rangle}
\newcommand{\DS}{D\!S}

\title{Towards a Theory of Dobrakov-Sobolev Spaces}
\author{Artem Yurievich Dudko}

\begin{document}

\begin{titlepage}

\centering

\vspace*{2cm}

{\huge \bfseries Towards a Theory of Dobrakov-Sobolev Spaces \par}

\vspace{0.5cm}

{\Large Artem Yurievich Dudko \par}

\vspace{0.3cm}

{\large \texttt{artemdudko@gmail.com}  \qquad \texttt{artem.dudko@math.msu.ru} \par}

\vspace{1.5cm}

\begin{abstract}
	\noindent
	The aim of this paper is to introduce a generalization of Sobolev spaces based on the Dobrakov integral.
	More precisely, we consider the setting of Banach-valued functions and Fomin differentiable Borel operator-valued measures on a finite-dimensional space.
	To build the necessary rigorous foundation, we establish analogs of several key results from the theory of differentiable real-valued measures, including the Leibniz rule and the integration by parts formula, all within the context of Dobrakov integration.
	These results are then embedded into the general scheme of vector-valued distribution theory.
	In particular, we describe the configuration of test spaces that yields an appropriate definition of a generalized derivative with respect to a differentiable operator-valued measure.
\end{abstract}

\begin{keywords}
	\noindent
	Dobrakov-Sobolev spaces, Dobrakov integral, differentiable vector measures, integration by parts, Leibniz rule.
\end{keywords}

\vfill

{\large 25 June 2026}

\end{titlepage}

\setcounter{page}{2}
\tableofcontents

\newpage
\section{Introduction}

In the standard definition of Sobolev spaces of scalar-valued functions on $\mathbb{R}^d$ a crucial role is played by the notion of generalized derivative, the definition of which in this context significantly depends upon the properties of the Lebesgue measure and integration by parts formula.
The construction can be extended to accommodate Banach-valued functions using the Bochner integral in place of the Lebesgue integral.
Another possible direction for generalization is to base the construction on a different measure and naturally along with it to consider a more general domain such as a locally convex space.

Several well-known approaches to implementing this idea in case of a real-valued measure on a locally convex space are reviewed in \autocite{Bogachev_2023}.
One of these approaches is analogous to the classical definition via generalized derivatives and relies on the theory of differentiable measures.
In particular, it supposes that the given real-valued measure is differentiable in the sense of Fomin and has its logarithmic derivatives in $L_q$, where $p^{-1}+q^{-1}=1$.
If we take $\mathbb{R}^d$ as the locally convex space in question, then for real-valued functions the corresponding definition of the Sobolev space of order $(1,p)$ given in \autocite{Bogachev_2023} reduces to:

\begin{definition} \label{0}
	The Sobolev space $G_p^1(\mu)$ consists of all functions $f \in L_p(\mu)$, having Sobolev derivatives $\partial_hf \in L_p(\mu)$ for all $h \in \mathbb{R}^d$ in the sense of integration by parts formula:
	$$ \forall \varphi \in C_B^{\infty}(\mathbb{R}^d, \mathbb{R}):      \;       \int_{\mathbb{R}^d} \partial_hf(x) \varphi(x) \mu(\diff{x})    =   - \int_{\mathbb{R}^d} f(x) \partial_h\varphi(x) \mu(\diff{x})   -      \int_{\mathbb{R}^d} f(x) \varphi(x) \Theta_h(x) \mu(\diff{x}) $$
	where $\Theta_h \in L_q(\mu)$ is the logarithmic derivative of $\mu$ along $h$, and $C_B^{\infty}(\mathbb{R}^d, \mathbb{R})$ is the space of smooth functions with bounded derivatives of all orders.
\end{definition}

The purpose of this work is to give a rigorously justified definition of a more general version of the space above, while for the time being still confining ourselves to the domain of  $\mathbb{R}^d$.
More specifically, given Banach spaces $\V$ and $\W$, we will consider functions on $\mathbb{R}^d$ with values in $\V$ and a Borel measure on $\mathbb{R}^d$ with values in the space $L(\V,\W)$ of bounded linear operators, countably additive in the operator norm.
Moreover, we will use the Dobrakov integral as the main analytical backbone, which will allow to avoid the assumption of finite variation.
Fulfilling this goal will require a considerable amount of preparatory work.
Firstly, the existing body of literature on the Dobrakov integral is rather limited, and we will need to obtain a few technical results that are not explicitly found elsewhere.
Secondly, when it comes to the theory of differentiable measures, the existing literature mainly focuses on the real-valued case, and very little is available in the vector-valued setting beyond the basic definitions.
Therefore, some fundamental results for real-valued measures leading up to the Definition \ref{0} must be reestablished within the vector-valued setting and the Dobrakov integration framework.
Lastly, it seems desirable to carefully embed the construction into a more general framework of vector-valued distribution theory, showing precisely how the notion of Sobolev derivative, described in the Definition \ref{0}, is linked to that of distributional and classical derivatives.

The plan for this paper is as follows.

Section \ref{1} is devoted to Dobrakov integration.
For the sake of self-containment we will review some of the basic concepts from vector measure theory, that will be used throughout.
Also we will briefly describe the construction of the Dobrakov integral together with some of the important theorems forming the basis of this theory.
All results already available in the literature will be stated without proof, giving the appropriate references.
Subsection \ref{29} contains some new technical results concerning integration with respect to a measure, defined by an operator-valued density, and the corresponding change of measure formula.
These will be necessary to handle the operator-valued logarithmic derivatives later on.

Section \ref{34} is dedicated to the concepts of continuity and differentiability of vector- and operator-valued measures on $\mathbb{R}^d$.
Here we obtain analogs of some well-known classical results for real-valued measures, culminating in Lebniz rule and integration by parts formula in connection with the Dobrakov integral, all necessary for defining the generalized derivative in the succeeding section.

Section \ref{57} is concerned with vector-valued distributions on $\mathbb{R}^d$ with respect to a differentiable Borel operator-valued measure.
Here we provide a suitable configuration of test spaces leading to the desirable definition of a generalized derivative that matches Definition \ref{0}.

Finally, section \ref{58} defines the Dobrakov-Sobolev space of order $(k,p)$ and proves its completeness.
  
\newpage
\section{Dobrakov integral} \label{1}

Dobrakov developed his theory of integration over a period of 18 years starting in 1970, which resulted in a series of articles.
Among these papers \autocite{Dobrakov_0, Dobrakov_1, Dobrakov_2} play a fundamental role, containing the basic notions, theorems on interchange of limit and integral, and various results pertaining to Lebesgue type spaces that arise within this theory.
Special mention must be made of article \autocite{Dobrakov_Panchapagesan} that came out somewhat later and complements work \autocite{Dobrakov_1}, filling in some logical gaps and containing an exhaustive treatment of the Pettis Measurability Theorem.
An excellent overview of Dobrakov's theory and its comparison with other Lebesgue-type integrals can be found in \autocite{Panchapagesan_ODF}.

As discussed in \autocite{Panchapagesan_ODF}, the Dobrakov integral coincides with the classical Lebesgue integral in the scalar case and subsumes the Bochner integral as well as its generalization to the case of operator-valued measures with finite variation, known as the Dinculeanu integral \autocite{Dinculeanu_VM}.
One of its distinguishing features is absence of absolute integrability, despite which it still possesses an analog of the $L_p$-norm, that in some sense constitutes an extension of the notion of semivariation.
In its original formulation the framework allows integration of Banach-valued functions with respect to an operator-valued measure, countably additive in the strong operator topology, defined on a $\delta$-ring and having finite semivariation.
However this level of generality will not be attained here due to some of our results in the next section requiring the structure of a $\sigma$-algebra and continuity of semivariation (the later immidiately elevates countable additivity to the uniform operator topology).
For these reasons when presenting the basics of integration theory we will confine ourselves to this less general setting.

\subsection{Vector measures and the associated submeasures} \label{2}
For the rest of section \ref{1} we fix a measurable space with a $\sigma$-algebra $(X, \A)$.
By $\mathrm{FD}(\A)$ we will denote the collection of all finite disjoint families of sets in $\A$.
The classes of Borel and compact subsets of a topological space $Y$ are denoted by $\mathcal{B}_Y$ and $\mathcal{K}_Y$ respectively.
$Y$, $\V$, $\W$ are Banach spaces over $\F \in \{\mathbb{R}, \mathbb{C}\}$.

\begin{definition}
    	An additive function $\mu : \A \to Y$ is called a vector measure.
    	The set of all $Y$-valued measures on $\A$, countably additive in the norm topology, will be denoted by $\CA\left[\A,Y\right]$.
	A family $\mathcal{M} \subseteq \CA\left[\A,Y\right]$ is called uniformly countably additive, if for any disjoint sequence $\{A_n\}$ the series $\sum_{i=1}^{\infty} \mu(A_i)$ converges uniformly in $\mu \in \mathcal{M}$.
\end{definition}

The next fundamental theorem plays a vital role in the construction of Dobrakov integral.
\begin{theorem}[Vitali-Hahn-Saks-Nikodym, \autocite{Dunford_Schwartz_LO1}, Theorem IV.10.5] \label{3}
	Suppose $\{\mu_n\}_1^\infty$ is a sequence in $\CA\left[\A,Y\right]$, and for each $A \in \A$ there exists a limit $\mu(A) = \lim_{n\to\infty} \mu_n(A)$.\\
	Then: $\mu \in \CA\left[\A,Y\right]$ and the family $\{\mu_n\}_1^\infty$ is uniformly countably additive on $\A$.
\end{theorem}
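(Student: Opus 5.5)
The plan is the classical Baire category argument on the Fréchet–Nikodym metric space of measurable sets, applied to a sequence of vector measures. First I reduce to a single control measure. For each $n$, $\mu_n \in \CA[\A,Y]$ has a finite nonnegative control measure $\lambda_n$ (Bartle–Dunford–Schwartz), meaning $\lambda_n(A)\to 0$ implies $\|\mu_n\|(A)\to 0$, where $\|\mu_n\|$ denotes the semivariation. I then set
$$\lambda = \sum_n 2^{-n}\frac{\lambda_n}{1+\lambda_n(X)},$$
so that every $\mu_n$ is $\lambda$-continuous. Next I pass to the metric space $(\A(\lambda), \rho)$, whose points are classes of sets modulo $\lambda$-null sets and whose metric is $\rho(A,B)=\lambda(A\triangle B)$. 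This space is complete, since it is isometric to a closed subset of $L_1(\lambda)$ consisting of indicator functions. Each $\mu_n$ is well defined and continuous on it.

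The key step is uniform $\lambda$-continuity. Fix $\varepsilon>0$ and define
$$F_N=\bigl\{A: \sup_{m,n\ge N}\|\mu_n(A)-\mu_m(A)\|\le\varepsilon\bigr\}.$$
Each $F_N$ is closed, and because the pointwise limits exist, the union of the $F_N$ is the whole space. By Baire's theorem some $F_N$ contains a ball $B(A_0,r)$. If $\lambda(E)<r$, then $A_0\cup E$ and $A_0\setminus E$ both lie in the ball, and
$$\mu_n(E)=\mu_n(A_0\cup E)-\mu_n(A_0\setminus E).$$
It follows that $\|\mu_n(E)-\mu_N(E)\|\le 2\varepsilon$ for all $n\ge N$. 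Shrinking $r$ to handle the finitely many measures $\mu_1,\dots,\mu_N$, each of which is $\lambda$-continuous, I get $\sup_n\|\mu_n(E)\|\le 3\varepsilon$ whenever $\lambda(E)<\delta$. This is the main obstacle, and the care lies in two places: verifying closedness of $F_N$, which comes from continuity of each $\mu_n$ in $\rho$, and the difference trick that turns a ball around $A_0$ into a neighbourhood of $\emptyset$.

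The conclusions then follow. For a disjoint sequence $\{A_i\}$, the tails satisfy $\lambda\bigl(\bigcup_{i>k}A_i\bigr)\to 0$. By the uniform estimate, the tails $\sum_{i>k}\mu_n(A_i)=\mu_n\bigl(\bigcup_{i>k}A_i\bigr)$ tend to $0$ uniformly in $n$, which is uniform countable additivity. Finally, $\mu$ is clearly additive. Passing to the limit in $n$ gives $\|\mu(E)\|\le 3\varepsilon$ when $\lambda(E)<\delta$, so $\mu\bigl(\bigcup_{i>k}A_i\bigr)\to 0$. This yields $\mu\bigl(\bigcup A_i\bigr)=\sum\mu(A_i)$ in norm, hence $\mu\in\CA[\A,Y]$.
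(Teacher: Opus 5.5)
The paper gives no proof of this result and only cites Dunford--Schwartz; your argument is correct and is essentially the classical proof found there: obtain a single control measure $\lambda$ from the Bartle--Dunford--Schwartz theorem, then run the Baire category argument on the complete Fr\'echet--Nikodym space of sets modulo $\lambda$-null sets to get uniform $\lambda$-continuity of $\{\mu_n\}$, which yields both uniform countable additivity and countable additivity of $\mu$. The delicate points are all handled correctly: the closedness of $F_N$, the identity $\mu_n(E)=\mu_n(A_0\cup E)-\mu_n(A_0\setminus E)$, and shrinking $\delta$ to cover $\mu_1,\dots,\mu_N$.
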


\begin{definition}
	A nondecreasing function $\lambda : \A \to [0, +\infty]$ with $\lambda(\varnothing)=0$ is called a submeasure.\\
	A submeasure is called continuous if for any monotone sequence $\A \ni A_n \searrow \varnothing$ we have $\lambda(A_n) \rightarrow 0$.
\end{definition}

Let us recall the important notions of variation, semivariation and supremation, arising naturally in the theory of vector measures.

\begin{definition}[\autocite{Dinculeanu_VM} p. 32]
	Variation of a vector measure $\mu : \A \to Y$ on a subset $E \subseteq X$ is defined as:
	$$\var(\mu, E) := \sup\left\{ \sum_{i=1}^{m}\|\mu(A_i)\|_Y \;:\; \{A_i\}_1^m \in \mathrm{FD}(\A),\; \bigsqcup_{i=1}^{m}A_i \subseteq E \right\}$$
	Its restriction to $\A$ will be denoted by $\overline{\mu}$.
\end{definition}

\begin{definition}[\autocite{Dinculeanu_VM}, p. 51]
	Suppose there is a linear isometry $Y \hookrightarrow L(\V, \W)$.
	Semivariation of a vector measure $\mu : \A \to Y$ relatively to $(\V, \W)$ on a subset $E \subseteq X$ is defined as:
	$$\svar_{\V, \W}(\mu, E) := \sup\left\{  \left\| \sum_{i=1}^{m} \mu(A_i) v_i \right\|_{\W} \;:\; \{A_i\}_1^m \in \mathrm{FD}(\A),\; \bigsqcup_{i=1}^{m}A_i \subseteq E, \; \|v_i\|_\V \leq 1 \right\}$$
	Its restriction to $\A$ will be denoted by $\widehat{\mu}$.
\end{definition}

According to a corollary in \autocite[p. 55]{Dinculeanu_VM}, for $\mu \in \CA\left[\A,Y\right]$ the semivariation $\widehat{\mu}$ is a countably subadditive sumbeasure on $\A$ and,
as mentioned in \autocite[p. 17]{Dobrakov_0}, its continuity on $\A$ implies its finiteness.

Let us note that there is always a natural isometry $Y \hookrightarrow L(\F, Y)$.
Semivariation of a vector measure $\mu : \A \to Y$ relatively to this isometry is called scalar.
Its restriction to $\A$ will always be denoted by $\widetilde{\mu}$.
Thus for $\mu \in \CA\big[\A, L(\V, \W) \big]$ we have both semivariations $\widetilde{\mu}$ and $\widehat{\mu}$.

\begin{proposition}[\autocite{Dinculeanu_VM}, Proposition 4.4] \label{4}
	Suppose $\mu : \A \to Y$ is a vector measure and $Y \hookrightarrow L(\V, \F) = \V^*$.
	Then $\svar_{\V, \F}(\mu, \cdot) = \var(\mu,  \cdot)$
\end{proposition}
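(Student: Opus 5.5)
We prove the two inequalities $\svar_{\V,\F}(\mu,E) \le \var(\mu,E)$ and $\svar_{\V,\F}(\mu,E) \ge \var(\mu,E)$ for an arbitrary subset $E \subseteq X$. Fix such an $E$ and a family $\{A_i\}_1^m \in \mathrm{FD}(\A)$ with $\bigsqcup_{i=1}^m A_i \subseteq E$. Since both quantities are suprema over exactly these families, it suffices to compare, family by family, the sums $\sum_{i=1}^m \|\mu(A_i)\|_Y$ and $\left|\sum_{i=1}^m \mu(A_i)v_i\right|$ with $\|v_i\|_\V \le 1$. Throughout we use that $\|\mu(A_i)\|_Y = \|\mu(A_i)\|_{\V^*}$, because the embedding $Y \hookrightarrow \V^*$ is isometric.

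\textbf{Upper bound.} For any $v_i$ with $\|v_i\|_\V \le 1$, the triangle inequality gives
$$\left|\sum_{i=1}^m \mu(A_i)v_i\right| \le \sum_{i=1}^m |\mu(A_i)v_i| \le \sum_{i=1}^m \|\mu(A_i)\|_{\V^*} = \sum_{i=1}^m \|\mu(A_i)\|_Y \le \var(\mu,E).$$
Taking the supremum over all admissible families and vectors yields $\svar_{\V,\F}(\mu,E) \le \var(\mu,E)$.

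\textbf{Lower bound.} Fix $\varepsilon > 0$. By the definition of the dual norm, for each $i$ we choose $u_i \in \V$ with $\|u_i\|_\V \le 1$ and $|\mu(A_i)u_i| \ge \|\mu(A_i)\|_{\V^*} - \varepsilon/m$. Next we rotate the phases: if $\mu(A_i)u_i \neq 0$, set $v_i := \overline{\mathrm{sgn}(\mu(A_i)u_i)}\,u_i$, where $\mathrm{sgn}(z) = z/|z|$; otherwise set $v_i := u_i$. Then $\|v_i\|_\V \le 1$ and $\mu(A_i)v_i = |\mu(A_i)u_i| \ge 0$ is real. This unimodular rescaling is legitimate over both $\F = \mathbb{R}$ and $\F = \mathbb{C}$. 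Since all the terms $\mu(A_i)v_i$ are nonnegative reals, no cancellation occurs in the sum, and
$$\svar_{\V,\F}(\mu,E) \ge \left|\sum_{i=1}^m \mu(A_i)v_i\right| = \sum_{i=1}^m |\mu(A_i)u_i| \ge \sum_{i=1}^m \|\mu(A_i)\|_Y - \varepsilon.$$
Letting $\varepsilon \to 0$ and then taking the supremum over families gives $\svar_{\V,\F}(\mu,E) \ge \var(\mu,E)$.

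The two inequalities together prove the claim. Both sides may be $+\infty$; this causes no difficulty, since every estimate above is made for a fixed finite family before any supremum is taken. The only step needing any care is the phase alignment in the lower bound, which is what rules out cancellation between the terms of the sum.
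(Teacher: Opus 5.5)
Your proof is correct. The triangle inequality gives $\svar_{\V,\F}(\mu,E)\le\var(\mu,E)$, and choosing almost-norming vectors rotated by unimodular scalars (so that each $\mu(A_i)v_i\ge 0$ and nothing cancels) gives the reverse inequality, with the isometry $Y\hookrightarrow\V^*$ identifying $\|\mu(A_i)\|_Y$ with the functional norm. The paper gives no proof of its own and only cites Dinculeanu's Proposition 4.4, which is this same standard two-inequality argument.
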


Now let $\W'$ be another Banach space and $T \in L(\W, \W')$.
If $\mu : \A \to Y \hookrightarrow L(\V, \W)$, we can define the composition of the measure and the linear operator as follows: 
$$T\mu \;:\; \A \ni A \:\mapsto\: T \circ (\mu(A)) \:\in\: L(\V, \W')$$
It's easily verified that $T\mu$ inherits countable additivity in the operator norm and $\svar_{\V, \W'}(T\mu, \cdot) \leq \|T\|_{op} \cdot \svar_{\V, \W}(\mu, \cdot)$

In particular, given a bounded functional $w^* \in \W^*$ and a vector measure $\mu : \A \to Y \hookrightarrow L(\V, \W)$,  we have their composition $w^*\mu$, which is used in the following important result, expressing semivariation in terms of variations of a family of measures.

\begin{theorem}[\autocite{Dinculeanu_VM}, Proposition 4.5]  \label{5}
	Suppose $\mu : \A \to Y \hookrightarrow L(\V, \W)$ is a vector measure and $\mathcal{N} \subseteq \W^*_1$ is a norming subset.
	Then $$\svar_{\V, \W}(\mu, E) = \sup \big\{ \var(w^* \mu, E) \;:\; w^* \in \mathcal{N} \big\} = \sup \big\{ \svar_{\V, \F}(w^* \mu, E) \;:\; w^* \in \mathcal{N} \big\}$$
\end{theorem}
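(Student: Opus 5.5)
We prove the two equalities by establishing the chain
$$\svar_{\V,\W}(\mu,E) \;\geq\; \sup_{w^*\in\mathcal{N}} \var(w^*\mu,E) \;=\; \sup_{w^*\in\mathcal{N}} \svar_{\V,\F}(w^*\mu,E) \;\geq\; \svar_{\V,\W}(\mu,E).$$
The middle equality is immediate from Proposition \ref{4}. For each $w^* \in \W^*$ the composition $w^*\mu$ takes values in $L(\V,\F)=\V^*$ via the natural isometry, so $\svar_{\V,\F}(w^*\mu,\cdot)=\var(w^*\mu,\cdot)$ holds pointwise in $w^*$. Taking suprema over $\mathcal{N}$ gives the second equality of the statement.

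For the inequality $\svar_{\V,\W}(\mu,E) \geq \var(w^*\mu,E)$ with $w^*\in\mathcal{N}\subseteq\W^*_1$, fix $\{A_i\}_1^m\in\mathrm{FD}(\A)$ with $\bigsqcup A_i\subseteq E$ and $\varepsilon>0$. Since $\|w^*\circ\mu(A_i)\|_{\V^*}=\sup_{\|v\|\le1}|w^*(\mu(A_i)v)|$, we choose $v_i$ with $\|v_i\|_\V\le1$ and $|w^*(\mu(A_i)v_i)|\ge\|w^*\mu(A_i)\|-\varepsilon/m$. We then multiply $v_i$ by a unimodular scalar $\theta_i\in\F$ so that $w^*(\mu(A_i)\theta_iv_i)$ is real and nonnegative; this handles both $\F=\mathbb{R}$ and $\F=\mathbb{C}$. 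This gives
$$\sum_i\|w^*\mu(A_i)\|-\varepsilon\le w^*\Big(\sum_i\mu(A_i)\theta_iv_i\Big)\le\|w^*\|\,\Big\|\sum_i\mu(A_i)\theta_iv_i\Big\|_\W\le\svar_{\V,\W}(\mu,E).$$
Taking the supremum over partitions and letting $\varepsilon\to0$ proves the first inequality.

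For the reverse inequality, fix admissible $\{A_i\}$ and $\|v_i\|\le1$, and set $w=\sum_i\mu(A_i)v_i$. Since $\mathcal{N}$ is norming, $\|w\|_\W=\sup_{w^*\in\mathcal{N}}|w^*(w)|$. For each $w^*\in\mathcal{N}$,
$$|w^*(w)|\le\sum_i|(w^*\mu(A_i))v_i|\le\sum_i\|w^*\mu(A_i)\|_{\V^*}\le\var(w^*\mu,E).$$
Taking the supremum over $w^*\in\mathcal{N}$ and then over the admissible families closes the chain.

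The only delicate points are the phase-rotation trick in the complex case and the precise meaning of \emph{norming} as $\|w\|=\sup_{\mathcal{N}}|w^*(w)|$; no convergence issues arise, since all sums involved are finite.
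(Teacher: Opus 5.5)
Your proof is correct. The norming-set estimate gives $\svar_{\V,\W}(\mu,E)\le\sup_{w^*\in\mathcal{N}}\var(w^*\mu,E)$, and the choice of $v_i$ together with $\|w^*\|\le1$ gives the reverse bound. Proposition \ref{4}, applied to the $\V^*$-valued measures $w^*\mu$, gives the second equality, and only finite additivity is used.

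The paper gives no proof of its own (it only cites Dinculeanu), and yours is the standard direct argument. The one simplification is that the phase-rotation step is redundant. Proposition \ref{4} already gives $\var(w^*\mu,E)=\svar_{\V,\F}(w^*\mu,E)$, which is the supremum of $\bigl|w^*\bigl(\sum_i\mu(A_i)v_i\bigr)\bigr|$ over admissible families with $\|v_i\|\le1$. That is at most $\svar_{\V,\W}(\mu,E)$ directly.
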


\begin{definition}[\autocite{Dobrakov_Panchapagesan}, Definition 4]
	Supremation of a vector measure $\mu : \A \to Y$ on a subset $E \subseteq X$ is defined as:
	$$\mathrm{sp}(\mu, E) := \sup\left\{ \|\mu(A)\|_Y \;:\; \A \ni A \subseteq E\right\} $$
	Its restriction to $\A$ will be denoted by $\overset{\smile}{\mu}$.
\end{definition}

\begin{proposition}[\autocite{Dobrakov_Panchapagesan}, Propositions 2,4] \label{36}
	Suppose $\mu \in \CA\left[\A,Y\right]$ is a countably additive measure. Then
	\begin{enumerate}[label=\alph*)]		
		\item
		$ \tilde{\mu}, \overset{\smile}{\mu}$ are continuous submeasures on $\A$
		\item
		$\forall A \in \A \::\; \overset{\smile}{\mu}(A) \leq \tilde{\mu}(A) \leq 4 \overset{\smile}{\mu}(A) < \infty$
	\end{enumerate}
\end{proposition}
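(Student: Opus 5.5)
We prove part b) first, since its finiteness claim is what makes part a) meaningful; both parts rest on one standard reduction. By Theorem \ref{5} with $\V=\F$, $\W=Y$ and $\mathcal{N}=Y^*_1$ (the whole dual unit ball is norming), the scalar semivariation satisfies $\tilde{\mu}(A)=\sup_{\|y^*\|\le 1}\var(y^*\mu,A)$. Each $y^*\mu$ is a scalar countably additive measure, so its variation is a finite positive measure.

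For the lower bound in b), take $B\subseteq A$ in $\A$. The one-element family $\{B\}$ with coefficient of modulus $1$ gives $\|\mu(B)\|\le\tilde{\mu}(A)$, so $\overset{\smile}{\mu}(A)\le\tilde{\mu}(A)$.

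For the upper bound, fix $y^*$ in the dual unit ball and consider the real case first. Given a partition $\{A_i\}$ of a subset of $A$, let $P$ be the union of those $A_i$ with $y^*\mu(A_i)\ge 0$ and $N$ the union of the rest. Then
$$\sum_i |y^*\mu(A_i)| = y^*\mu(P)-y^*\mu(N)\le \|\mu(P)\|+\|\mu(N)\|\le 2\overset{\smile}{\mu}(A).$$
In the complex case we split into real and imaginary parts and apply the same argument to each, which gives the constant $4$. Taking suprema over partitions and over $y^*$ yields $\tilde{\mu}(A)\le4\overset{\smile}{\mu}(A)$.

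Finiteness of $\overset{\smile}{\mu}(A)$ is the real obstacle. Apply the Nikodym boundedness principle to the family $\{y^*\mu\}$: for each fixed $B$ it is bounded, since $|y^*\mu(B)|\le\|\mu(B)\|$. This gives a uniform bound on $\sup_{y^*}\sup_{B}|y^*\mu(B)|=\overset{\smile}{\mu}(X)$. Alternatively, argue by contradiction. If $\overset{\smile}{\mu}(A)=\infty$, the standard splitting argument builds a disjoint sequence $B_n$ with $\|\mu(B_n)\|\ge 1$: at each stage, among $B$ and $A\setminus B$, one retains infinite supremation. This contradicts the convergence of $\sum_n\mu(B_n)$ required by countable additivity.

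For part a), monotonicity and $\overset{\smile}{\mu}(\varnothing)=\tilde{\mu}(\varnothing)=0$ are immediate from the definitions. By b) it then suffices to prove continuity of $\overset{\smile}{\mu}$. Suppose $A_n\searrow\varnothing$ but $\overset{\smile}{\mu}(A_n)>\varepsilon$ for all $n$. Pick $B_1\subseteq A_{n_1}$ with $\|\mu(B_1)\|>\varepsilon$. Countable additivity along $B_1\cap A_n\searrow\varnothing$ gives $\mu(B_1\cap A_n)\to0$, so we may choose $n_2$ with $\|\mu(B_1\setminus A_{n_2})\|>\varepsilon/2$. Iterating produces disjoint sets $C_k=B_k\setminus A_{n_{k+1}}$ with $\|\mu(C_k)\|>\varepsilon/2$. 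This again contradicts countable additivity, since the terms of the convergent series $\sum_k\mu(C_k)$ must tend to zero.
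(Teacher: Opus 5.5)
Your proof is correct: the positive/negative (and real/imaginary) splitting gives the constant $4$, Nikodym boundedness gives finiteness, and your disjointification argument gives continuity of $\overset{\smile}{\mu}$, hence of $\tilde{\mu}$ via $\tilde{\mu}\leq 4\overset{\smile}{\mu}$. The paper has no proof of its own to compare with, since it only cites Dobrakov and Panchapagesan; yours is the standard argument for this fact.

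One detail is left implicit in your alternative finiteness argument. You should choose $B\subseteq A$ with $\|\mu(B)\|\geq\|\mu(A)\|+1$, so that both $B$ and $A\setminus B$ have norm at least $1$; subadditivity of $\overset{\smile}{\mu}$ then leaves one of them with infinite supremation. Your Nikodym route already settles finiteness without this.
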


In particular, it follows from this proposition that the scalar semivariation of a contably additive operator-valued measure $\mu: \A \to L(\V, \W)$ is continuous and finite.
However, in general countable additivity in the uniform operator topology does not imply continuity or even finiteness of $\widehat{\mu}$.

\begin{theorem}[\autocite{Dobrakov_0}, Lemma 3] \label{6}
	Suppose $\{\mu_n\}_1^\infty$ is a sequence in $\CA\left[\A,L(\V,\W)\right]$, and for each $A \in \A$ there exists a limit $\mu(A) = \lim_{n\to\infty} \mu_n(A)$.
	Then $\mu \in \CA\left[\A,L(\V,\W)\right]$ and if $\widehat{\mu_n}$ are continuous, then so is $\widehat{\mu}$.
\end{theorem}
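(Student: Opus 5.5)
The first assertion follows from Theorem \ref{3}; the real work is in the continuity of $\widehat{\mu}$. Since $L(\V,\W)$ is a Banach space, Theorem \ref{3} with $Y = L(\V,\W)$ gives $\mu \in \CA\left[\A, L(\V,\W)\right]$ at once. It also shows that $\{\mu_n\}$ is uniformly countably additive in the operator norm.

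\emph{Reformulating continuity of semivariation.} For a finite disjoint family $B = \{B_i\}_1^m \in \mathrm{FD}(\A)$ and vectors $v = \{v_i\}$ with $\|v_i\|_\V \le 1$, I put
$$\nu_{\mu,B,v}(C) := \sum_{i=1}^m \mu(C\cap B_i)v_i \in \W .$$
Each $\nu_{\mu,B,v}$ lies in $\CA[\A,\W]$, and directly from the definitions
$$\widehat{\mu}(E) = \sup_{B,v} \overset{\smile}{\nu}_{\mu,B,v}(E).$$
So continuity of $\widehat{\mu}$ at $A_k \searrow \varnothing$ means $\sup_{B,v}\overset{\smile}{\nu}_{\mu,B,v}(A_k) \to 0$. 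Every $\nu_{\mu_n,B,v}$ converges setwise to $\nu_{\mu,B,v}$, and the only missing ingredient is uniformity in $(B,v)$.

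\emph{Argument by contradiction (gliding hump).} Suppose $A_k \searrow \varnothing$ and $\widehat{\mu}(A_k) > \varepsilon$ for all $k$. I then construct indices $k_1<k_2<\dots$, disjoint sets $D_j \subseteq A_{k_j}\setminus A_{k_{j+1}}$, and families $(B^j,v^j)$ carried by $D_j$ with $\|\nu_{\mu,B^j,v^j}(D_j)\|_\W > \varepsilon/2$.
\begin{enumerate}[label=\arabic*)]
\item Pick a family $(B^j,v^j)$ inside $A_{k_j}$ whose value exceeds $\varepsilon$.
\item For this fixed finite family, countable additivity of $\mu$ gives $\|\nu_{\mu,B^j,v^j}(A_m)\|_\W \to 0$ as $m \to \infty$.
\item Choose $k_{j+1}$ so large that trimming by $A_{k_{j+1}}$ costs less than $\varepsilon/2$; then $D_j := A_{k_j}\setminus A_{k_{j+1}}$ works.
\end{enumerate}
Write $\nu^j_n := \nu_{\mu_n,B^j,v^j}$ and $\nu^j := \nu_{\mu,B^j,v^j}$.

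For each fixed $n$, continuity of $\widehat{\mu_n}$ gives two facts:
\begin{enumerate}[label=\arabic*)]
\item $\sup_j \|\nu^j_n(D_j)\| \to 0$ along the tail, since $\|\nu^j_n(D_j)\| \le \widehat{\mu_n}\bigl(\bigcup_{i\ge j} D_i\bigr)$.
\item For every $S \subseteq \mathbb{N}$ the series $\sum_{j\in S}\nu^j_n(D_j)$ converges unconditionally, because its tails are bounded by $\widehat{\mu_n}$ of a tail union.
\end{enumerate}
Consequently each $m_n(S) := \sum_{j\in S}\nu^j_n(D_j)$ is a $\W$-valued countably additive measure on $2^{\mathbb{N}}$. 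Moreover, $\lim_n m_n(S)$ exists for every finite $S$, with columns $x_{nj} := \nu^j_n(D_j)$ converging to $\nu^j(D_j)$, which has norm $> \varepsilon/2$.

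\emph{Main obstacle.} I must reach a contradiction from a matrix $(x_{nj})$ with the following properties: each row is summable over arbitrary subsets, each column converges to a vector of norm $>\varepsilon/2$, and each row satisfies $x_{nj} \to 0$ as $j\to\infty$. The difficulty is that $\lim_n m_n(S)$ is known only for finite $S$, so Theorem \ref{3} cannot be applied directly.

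\begin{enumerate}[label=\arabic*)]
\item I would first invoke the Antosik--Mikusi\'nski diagonal lemma, which is the matrix form of the gliding hump underlying Theorem \ref{3}.
\item Pass to subsequences $n_p$, $j_p$ such that $\|x_{n_p j_p}\| > \varepsilon/4$, while $\sum_{q\ne p}\|x_{n_p j_q}\|$ is small. This is possible since rows vanish at infinity and columns converge.
\item Set $S = \{j_p\}$. The measures $m_{n_p}$ restricted to subsets of $S$ then have "diagonal" values bounded away from zero on ever-later singletons.
\item Recover setwise convergence on all subsets of $S$ by a further diagonal extraction. Use the lemma in the form: a sequence of countably additive measures on $2^{\mathbb{N}}$ converging on all subsets is uniformly countably additive. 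This form is exactly Theorem \ref{3} with $\A = 2^{\mathbb{N}}$.
\end{enumerate}
Uniform countable additivity gives $\sup_p \|m_{n_p}(\{j_q\})\| \to 0$ as $q \to \infty$. This contradicts $\|x_{n_q j_q}\| > \varepsilon/4$. If the extraction needed in step 4 fails, the fallback is Dobrakov's original argument in \autocite{Dobrakov_0}, which I would follow at this point.
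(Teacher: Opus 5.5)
\textbf{There is a genuine gap, and it cannot be closed as the statement is formulated.}

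Your reduction is sound up to the matrix $(x_{nj})$. The first assertion is indeed Theorem \ref{3}. The construction of disjoint $D_j$ with $\|\nu^j(D_j)\|_\W>\varepsilon/2$ is correct. For fixed $n$, the rows $m_n$ are countably additive on $2^{\mathbb{N}}$ with $x_{nj}\to 0$ as $j\to\infty$.

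The argument fails in the last part. In step 2, an extraction making $\sum_{q\ne p}\|x_{n_pj_q}\|$ small needs the columns to tend to $0$. Yours tend to $\nu^j(D_j)$, of norm $>\varepsilon/2$, so for fixed $q$ and large $p>q$ the term $\|x_{n_pj_q}\|$ exceeds $\varepsilon/2$. Passing to row differences $x_{n_{p+1}j}-x_{n_pj}$ would repair that. Step 4, however, is a genuine gap. Your contradiction needs $\lim_n m_n(S)$ on an infinite set $S$, i.e.\ convergence of $\int_E f\,d\mu_n$ for the bounded countably valued function $f=\sum_j\sum_i\chi_{D_j\cap B^j_i}v^j_i$. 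Setwise convergence of $\mu_n$ only controls simple functions, and no diagonal extraction can give a subsequence converging on all of the uncountably many subsets of $S$, since $\W$ is arbitrary.

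The statement as written here is false. The paper gives no proof of it, only the reference to Dobrakov, so presumably some hypothesis was lost in transcription.
\begin{itemize}
\item \emph{Infinite semivariation.} Let $X=\mathbb{N}$, $\A=2^{\mathbb{N}}$, $\V=\ell_1$, $\W=\mathbb{R}$. Then $L(\V,\W)=\ell_\infty$, and by Proposition \ref{4} semivariation coincides with variation. Put $\mu_n(A)=\sum_{j\in A,\,j\le n}j^{-1}e_j^*$. Then $\widehat{\mu_n}(A)=\sum_{j\in A,\,j\le n}j^{-1}$ is continuous. Also $\mu_n(A)\to\mu(A)=\sum_{j\in A}j^{-1}e_j^*$ with $\|\mu_n(A)-\mu(A)\|\le (n+1)^{-1}$, uniformly in $A$. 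Yet $\widehat{\mu}(\{j\ge k\})=\sum_{j\ge k}j^{-1}=\infty$ for every $k$. In your construction this is the matrix $x_{nj}=\sum_{i\in D_j,\,i\le n}i^{-1}$, for which $m_n(\mathbb{N})\to\infty$.
\item \emph{Finite semivariation.} Take $\W=c_0$ and $\mu(A)v=\sum_{j\in A}v_jw_j$ with $w_j=\sum_{j/2<k\le j}k^{-1}e_k$, and let $\mu_n(A)=\mu(A\cap\{1,\dots,n\})$. Then $\|\mu_n(A)-\mu(A)\|\le\sup_{j>n}\|w_j\|_\infty\to 0$ and each $\widehat{\mu_n}$ is continuous. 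Moreover $\widehat{\mu}\le 1$, yet $\widehat{\mu}(\{j\ge k\})=1$ for all $k$.
\end{itemize}

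What does hold in general is $\widehat{\mu}(A)\le\liminf_n\widehat{\mu_n}(A)$. So the conclusion follows immediately under an extra hypothesis, with no gliding hump needed. One such hypothesis is uniform continuity of $\{\widehat{\mu_n}\}$; another is $\widehat{\mu_n-\mu}(X)\to 0$, using subadditivity of semivariation.
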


\begin{definition}
	A vector measure $\mu : \A \to Y$ is said to be continuous with respect to a submeasure $\lambda : \A \to [0,\infty]$ (denoted $\mu << \lambda$), if $\forall \epsilon > 0 :\: \exists \delta > 0 :\: \forall A\in\A :\: \lambda(A)<\delta \implies \|\mu(A)\| < \epsilon$.
\end{definition}

An analogous definition applies when $\mu$ is a submeasure.
Moreover, from the previous proposition we obviously have:
$$\mu << \lambda \iff \tilde{\mu} << \lambda \iff \overset{\smile}{\mu} << \lambda$$

\begin{theorem}[\autocite{Dobrakov_Panchapagesan}, Theorem 6] \label{7}
	Suppose $\mu \in \CA\left[\A,Y\right]$ is a countably additive vector measure and $\lambda : \A \to [0,\infty]$ is a submeasure.\\
	Then the following conditions are equivalent:
	\begin{enumerate}[label=\alph*)]		
		\item
		$\forall \epsilon > 0 :\: \exists \delta > 0 :\: \forall A\in\A :\: \lambda(A)<\delta \implies \|\mu(A)\| < \epsilon$
		
		\item
		$\forall B \in \A: \: \lambda(A)=0 \implies \mu(A)=0$
	\end{enumerate}
\end{theorem}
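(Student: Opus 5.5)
The implication a)$\Rightarrow$b) is immediate. If $\lambda(A)=0$, then $\lambda(A)<\delta$ for every $\delta>0$, so a) gives $\|\mu(A)\|<\epsilon$ for every $\epsilon>0$, hence $\mu(A)=0$. (The hypothesis in b) should read ``$\forall A\in\A$''.) All the work is in b)$\Rightarrow$a), and I would prove it by contradiction. As stated, with $\lambda$ an arbitrary submeasure, the implication cannot hold in general. The standard version, as in Dobrakov--Panchapagesan, assumes $\lambda$ is continuous and also countably subadditive, or at least has the property that $\lambda(A_n)\to 0$ quickly forces $\lambda(\limsup A_n)=0$. I would therefore work under that reading, which is the one used later with $\lambda=\widehat{\nu}$.

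Suppose a) fails. Then there are $\epsilon>0$ and sets $A_n\in\A$ with $\lambda(A_n)<2^{-n}$ and $\|\mu(A_n)\|\geq\epsilon$. By Proposition \ref{36}, the scalar semivariation $\tilde\mu$ is a continuous, finite submeasure, and it is natural to pass from $\|\mu\|$ to $\tilde\mu$, since $\tilde\mu(A_n)\geq\overset{\smile}{\mu}(A_n)\geq\epsilon$. Set $B_n=\bigcup_{k\geq n}A_k$ and $B=\bigcap_n B_n=\limsup A_n$.
\begin{enumerate}[label=(\roman*)]
\item By countable subadditivity, $\lambda(B)\leq\lambda(B_n)\leq\sum_{k\ge n}2^{-k}\to0$, so $\lambda(B)=0$.
\item Every measurable subset of $B$ then also has $\lambda$-measure zero, by monotonicity of $\lambda$. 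Condition b) gives $\mu(C)=0$ for all $\A\ni C\subseteq B$, hence $\overset{\smile}{\mu}(B)=0$ and $\tilde\mu(B)=0$ by Proposition \ref{36}b).
\item Since $B_n\setminus B\searrow\varnothing$, continuity of $\tilde\mu$ (Proposition \ref{36}a) gives $\tilde\mu(B_n\setminus B)\to0$.
\item Using subadditivity of $\tilde\mu$ (semivariation is subadditive) and monotonicity, $\tilde\mu(B_n)\to 0$. Hence $\tilde\mu(A_n)\leq\tilde\mu(B_n)\to0$.
\end{enumerate}
This contradicts $\tilde\mu(A_n)\geq\epsilon$, so a) holds.

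The main obstacle is step (i), where the submeasure must pass a small value from the $A_n$ to their limsup. With only monotonicity this step fails, so I would make the needed hypothesis on $\lambda$ explicit: countable subadditivity, or more weakly that $\lambda$ is continuous and satisfies $\lambda(\limsup A_n)=0$ whenever $\sum\lambda(A_n)<\infty$. If only continuity and finite subadditivity are available, I would instead pass to a subsequence along which $\lambda\big(\bigcup_{k=n}^{m}A_k\big)$ stays controlled, using a Drewnowski-type lemma, and then use continuity of $\lambda$ along the decreasing sequence $B_n$. Everything on the $\mu$ side is handled by Proposition \ref{36}.
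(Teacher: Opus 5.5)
Your proof is correct and is the standard $\limsup$ argument. The paper gives no proof of its own here, only the citation to Dobrakov--Panchapagesan, so there is no alternative route to compare with.

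More importantly, your objection to the statement is justified. With the paper's definition of a submeasure (only nondecreasing with $\lambda(\varnothing)=0$), the implication b)$\Rightarrow$a) is false. It stays false even for continuous submeasures. Here is an example:
\begin{itemize}
\item Take $[0,1)$ with $\mu$ the Lebesgue measure, and let $D_n=\{x:\lfloor 2^n x\rfloor \text{ is even}\}$.
\item Set $\lambda(A)=\min\{\mu(A),\rho(A)\}$, where $\rho(A)=\inf\big(\{1/n : A\subseteq D_n\}\cup\{1\}\big)$.
\item $\lambda$ is nondecreasing, and it is continuous because $\lambda\le\mu$.
\item If $\lambda(A)=0$, then either $\mu(A)=0$ or $A\subseteq D_n$ for infinitely many $n$. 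The latter also gives $\mu(A)=0$, by independence of the binary digits. So b) holds.
\item But $\lambda(D_n)\le 1/n$ while $\mu(D_n)=1/2$, so a) fails.
\end{itemize}
Thus countable subadditivity (or your weaker $\limsup$ condition) really has to be added. It holds in both places where the paper uses the theorem: $\lambda=Leb_X$ in the continuity criterion in $\Top_{set}$, and $\lambda=\widetilde{\mu}$ in Theorem~\ref{39}\,d). So your corrected version covers every application in the paper.

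Three minor remarks:
\begin{itemize}
\item Your main argument never uses continuity of $\lambda$. It uses only countable subadditivity in (i) and monotonicity in (ii).
\item The Drewnowski-type detour in your last paragraph is unnecessary. A continuous, finitely subadditive submeasure is automatically countably subadditive, since $\lambda\big(\bigcup_k A_k\big)\le\sum_{k\le n}\lambda(A_k)+\lambda\big(\bigcup_k A_k\setminus\bigcup_{k\le n}A_k\big)$ and the last term tends to $0$.
\item On the $\mu$ side you can skip $\widetilde{\mu}$ altogether: $\|\mu(A_n)\|=\|\mu(A_n\setminus B)\|\le\overset{\smile}{\mu}(B_n\setminus B)\to0$ by Proposition~\ref{36}.
\end{itemize}
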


Let us also recall the definition of a regular measure, when $(X, \Top_X)$ is a locally compact topological space.
\begin{definition}[\autocite{Dinculeanu_Kluvanek_OVM}, p. 508]
	A vector measure $\mu : \A \to Y$ is called regular, if
	$$\forall A \in \A, \: \varepsilon > 0:     \quad       \exists K \in \mathcal{K}_X,\: U \in \Top_X:      \quad        K\subseteq A \subseteq U \; , \; \overset{\smile}{\mu}(U\setminus K) < \varepsilon$$
\end{definition}

To conclude this subsection we fix notation for several natural topologies on the space $\CA(\A,Y)$.
The topology of setwise convergence (i.e. on each measurable subset) is denoted by $\Top_{set}$.
The topology of convergence in variation $\Top_{v}$ is induced by the norm $\|\mu\|_{v} := \overline{\mu}(X)$.
Given an isometry $Y \hookrightarrow L(\V, \W)$, the topology of convergence in semivariation $\Top_{sv}$ is induced by the norm $\|\mu\|_{sv} := \widehat{\mu}(X)$.
Separate notation $\Top_{ssv}$ is reserved for the topology induced by the scalar semivariation.

\subsection{Measurable functions}\label{8}

In this subsection $Y \in \{\V, \overline{\mathbb{R}}\}$.

Let $\mathcal{M}(\A, \mathcal{B}_Y) := \{f:X \to Y \:|\: \forall B\in \mathcal{B}_Y: f^{-1}(B)\in \A\}$ be the set of all measurable (in a basic sense) functions from $X$ to $Y$. 
The characteristic function of a measurable set $A \in \A$ is denoted by $\chi_{A}$.
In a usual way we define the set of all $\A$-simple and $\A$-elementary $Y$-valued functions:
$$\Sim(\A, Y) := \big\{ f \in \mathcal{M}(\A, \mathcal{B}_Y) \::\: |\im f| < \aleph_0 \big\}$$
$$\mathcal{E}(\A, Y) := \big\{ f \in \mathcal{M}(\A, \mathcal{B}_Y) \::\: |\im f| \leq \aleph_0 \big\}$$

\begin{theorem}
	$\mathcal{M}(\A, \mathcal{B}_Y)$ is closed under sequential pointwise limits.
\end{theorem}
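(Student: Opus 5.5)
The statement covers two cases, $Y=\V$ a Banach space (a metric space) and $Y=\overline{\mathbb{R}}$ (metrizable, e.g.\ via $\arctan$). In both cases I will work with a metric $\rho$ on $Y$ compatible with its topology. Let $f_n \in \mathcal{M}(\A,\mathcal{B}_Y)$ converge pointwise to $f$. Since $\mathcal{B}_Y$ is generated by the open sets and $\{B \subseteq Y : f^{-1}(B)\in\A\}$ is a $\sigma$-algebra, it suffices to show that $f^{-1}(U)\in\A$ for every open $U\subseteq Y$.

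Fix an open $U$. If $U=Y$, then $f^{-1}(U)=X\in\A$. Otherwise, for $k\in\mathbb{N}$ define the open sets
$$U_k := \{\, y\in Y : \rho(y, Y\setminus U) > 1/k \,\}.$$
These satisfy $\overline{U_k}\subseteq \{\rho(\cdot,Y\setminus U)\ge 1/k\}\subseteq U$ and $\bigcup_k U_k = U$. The key identity I will verify is
$$f^{-1}(U) \;=\; \bigcup_{k=1}^{\infty}\bigcup_{N=1}^{\infty}\bigcap_{n\ge N} f_n^{-1}(U_k).$$
Each $f_n^{-1}(U_k)$ lies in $\A$ by measurability of $f_n$, and the right-hand side is built from countable unions and intersections, so it lies in $\A$ because $\A$ is a $\sigma$-algebra. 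That proves the theorem.

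To verify the identity I check both inclusions.

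\emph{Inclusion $\subseteq$.} Suppose $f(x)\in U$. Pick $k$ with $\rho(f(x),Y\setminus U)>2/k$. Since $f_n(x)\to f(x)$, for large $n$ we have $\rho(f_n(x),f(x))<1/k$. The function $\rho(\cdot,Y\setminus U)$ is $1$-Lipschitz, so $\rho(f_n(x),Y\setminus U)>1/k$, i.e.\ $f_n(x)\in U_k$ eventually.

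\emph{Inclusion $\supseteq$.} Suppose $f_n(x)\in U_k$ for all $n\ge N$. Then the limit satisfies $f(x)\in\overline{U_k}\subseteq U$.

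The only mildly delicate point is the case $Y=\overline{\mathbb{R}}$. Here I must make sure a compatible metric is used, for instance $\rho(a,b)=|\arctan a-\arctan b|$ with $\arctan(\pm\infty)=\pm\pi/2$. I also need that the Borel $\sigma$-algebra is generated by the open sets of this topology, which holds by definition. With that choice, the same argument goes through verbatim, so no real obstacle is expected.
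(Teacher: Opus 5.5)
Your proof is correct and is essentially the paper's argument in complemented form. The paper tests closed sets $F$ and writes $f^{-1}(F)=\bigcap_{n}\bigcup_{N}\bigcap_{k\ge N}f_k^{-1}(U_n)$ with $U_n=\{y:\mathrm{dist}(y,F)<1/n\}$, whereas you test open sets through the inner neighbourhoods $U_k$ and a $\bigcup\bigcup\bigcap$ expression. Your separate treatment of $U=Y$ and your explicit compatible metric on $\overline{\mathbb{R}}$ supply details the paper leaves implicit.
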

\begin{proof}
	Suppose $\mathcal{M}(\A, \mathcal{B}_Y) \ni f_n \xrightarrow{n\to\infty} f \in Y^X$, and $F\subseteq Y$ is closed.
	Define for $n\in\mathbb{N}$ the subsets \\ $U_n := \big\{ y\in Y \::\: \mathrm{dist}(y,F) < 1/n \big\} \in \mathcal{B}_Y$.
	Then $$\displaystyle f^{-1}(F) = \bigcap_{n\in\mathbb{N}} \bigcup_{N\in\mathbb{N}} \bigcap_{k \geq N} f_k^{-1}(U_n) \:\in\: \A$$
\end{proof}

\begin{definition}
	A function $f:X\to Y$ is called strongly $\A$-measurable, if it is a pointwise limit of a sequence of $\A$-simple functions.
	The corresponding space will be denoted by $\SM(\A, Y)$.
	Note that for $f \in \SM(\A, \overline{\mathbb{R}})$ an approximation by $\Sim(\A,\mathbb{R})$ is also possible.
\end{definition}

\begin{theorem}[\autocite{Dobrakov_Panchapagesan}, Theorem 1] \label{9}
	For a function $f : X \to \V$ the following conditions are equivalent:
	\begin{enumerate}[ label=\alph*)]		
		\item
		$f \in \SM(\A, \V)$
		
		\item
		$f \in \mathcal{M}(\A, \mathcal{B}_\V)$ and $\im{f}$ is separable
		
		\item
		$\displaystyle \exists\: \{f_n\}_1^\infty \in \mathcal{E}(\A, \V)^{\mathbb{N}} \::\: f_n \overset{n \to \infty}{\rightrightarrows} f$
	\end{enumerate}
	Consequently, $\SM(\A, \V)$ is closed under sequential pointwise limits.
\end{theorem}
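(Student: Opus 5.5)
I would prove the cycle of implications a) $\Rightarrow$ b) $\Rightarrow$ c) $\Rightarrow$ a), and then derive the closure statement from b) together with the preceding theorem on $\mathcal{M}(\A, \mathcal{B}_Y)$.

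For a) $\Rightarrow$ b), take $\A$-simple $f_n \to f$ pointwise. Each simple function lies in $\mathcal{M}(\A,\mathcal{B}_\V)$, so the previous theorem gives $f \in \mathcal{M}(\A,\mathcal{B}_\V)$. Moreover, $\im f$ is contained in the closure of the countable set $\bigcup_n \im f_n$, so it is separable.

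For b) $\Rightarrow$ c), fix a countable dense set $\{y_k\}$ in $\im f$. For each $n$, put
$$A_{n,k} := f^{-1}\big(B(y_k,1/n)\big) \setminus \bigcup_{j<k} f^{-1}\big(B(y_j,1/n)\big).$$
These sets lie in $\A$ because open balls are Borel and $f$ is measurable. By density they form a countable partition of $X$. Define $f_n := \sum_k y_k \chi_{A_{n,k}}$. Then $f_n$ has countable image, and $f_n^{-1}(B)$ is a countable union of the sets $A_{n,k}$, so $f_n \in \mathcal{E}(\A,\V)$. Finally $\|f_n - f\| < 1/n$ everywhere, which gives uniform convergence.

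For c) $\Rightarrow$ a), take elementary $g_n$ with $\sup_x\|g_n(x) - f(x)\| < 1/n$. Write $g_n = \sum_k z_{n,k}\chi_{C_{n,k}}$ with $\{C_{n,k}\}_k$ a countable measurable partition of $X$. Truncate by setting $h_n := \sum_{k\le n} z_{n,k}\chi_{C_{n,k}}$, which is $\A$-simple. Fix $x$ and $\varepsilon>0$, and choose $n$ with $1/n<\varepsilon$. For every $m \ge n$, $\|g_m(x)-f(x)\|<\varepsilon$, but $h_m(x)$ equals $g_m(x)$ only once $m$ exceeds the index $k$ of the cell of the $m$-th partition containing $x$. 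That index depends on $m$, so $h_m(x) \to f(x)$ can fail. This is the main obstacle, since uniform approximation does not directly give pointwise simple approximation.

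I would get around it by passing through b) instead of truncating. From c), each $g_n$ is measurable (countable unions of measurable sets) and the preceding theorem makes $f$ measurable. Also $\im f \subseteq \overline{\bigcup_n \im g_n}$ is separable, so c) $\Rightarrow$ b). It then remains to show b) $\Rightarrow$ a) directly. With $\{y_k\}$ dense in $\im f$, define $f_n(x) := y_{k}$, where $k \le n$ minimizes $\|f(x)-y_k\|$, taking the smallest such $k$ on ties. The level sets of $f_n$ are finite Boolean combinations of sets of the form $\{x: \|f(x)-y_i\| \le \|f(x)-y_j\|\}$. These are measurable, since $x \mapsto (\|f(x)-y_i\|, \|f(x)-y_j\|)$ is measurable as a composition of $f$ with a continuous map. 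Hence $f_n$ is $\A$-simple. Moreover $\|f_n(x)-f(x)\| = \min_{k\le n}\|f(x)-y_k\| \to 0$ by density, so $f_n \to f$ pointwise.

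For the closing statement, let $f_n \in \SM(\A,\V)$ converge pointwise to $f$. By b) each $f_n$ is measurable with separable image. The preceding theorem shows $f$ is measurable, and $\im f \subseteq \overline{\bigcup_n \im f_n}$ is separable. Therefore b) holds for $f$, and $f \in \SM(\A,\V)$.
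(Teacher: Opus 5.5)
Your proof is correct and complete. The steps a) $\Rightarrow$ b) and b) $\Rightarrow$ c) check out, the latter using the cells $A_{n,k}$. So does c) $\Rightarrow$ b), via the closure theorem for $\mathcal{M}(\A,\mathcal{B}_\V)$ and $\im f\subseteq\overline{\bigcup_n \im g_n}$. In b) $\Rightarrow$ a), the nearest-point selection among $y_1,\dots,y_n$ is also sound: the selection sets are finite Boolean combinations of measurable comparison sets, so each $f_n$ lies in $\Sim(\A,\V)$, and $\|f_n(x)-f(x)\|=\min_{k\le n}\|f(x)-y_k\|$ decreases to $0$. The closing statement then follows from b), as you say.

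The paper gives no proof of this theorem; it only quotes it from Dobrakov and Panchapagesan. So there is no argument to match step by step. The obstacle you found with naive truncation is real, though, and the paper does deal with it directly in the proof of Theorem \ref{11}. There, $s_n(x)$ is set equal to the $k$-th elementary approximant for the largest $k\le n$ such that $x$ lies in one of the first $n$ cells of the $k$-th partition, and to $0$ if there is no such $k$. For fixed $x$ and $N$, once $n$ is large the selected index is at least $N$, so the error is at most $1/N$. This gives c) $\Rightarrow$ a) directly, without separability or the closure theorem.

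Your detour through b) trades that bookkeeping for a cleaner approximating sequence: finitely many values, all taken in $\im f$, with pointwise error nonincreasing in $n$. It also makes b) the hub from which the closure statement follows at once.
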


We will use the following notation for the modulus of a function $|f|: x \mapsto \big\|f(x)\big\|_\V$.

\begin{theorem}[\autocite{Dobrakov_Panchapagesan}, Proposition 7] \label{10}
	If $f \in \SM(\A, \V)$, then $|f| \in \SM(\A, \mathbb{R})$.
	Moreover there exists a sequence $\{f_n\}_1^\infty$ in $\Sim(\A, \V)$ such that $f_n \to f$ \:and\: $|f_n| \nearrow |f|$ pointwise.
\end{theorem}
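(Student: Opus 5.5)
The plan is to prove the two claims of the statement separately. Measurability of $|f|$ comes from composition, and the monotone approximating sequence is built by modifying an arbitrary simple approximation, in the spirit of Dobrakov--Panchapagesan.

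\textbf{Step 1: measurability of $|f|$.} The map $\|\cdot\|_\V : \V \to \mathbb{R}$ is continuous. Take any sequence $g_n \in \Sim(\A,\V)$ with $g_n \to f$ pointwise. Then each $|g_n|$ is $\A$-simple and real-valued, since its image is the finite set of norms of the values of $g_n$ and its preimages are finite unions of preimages under $g_n$. Moreover $|g_n| \to |f|$ pointwise. Hence $|f| \in \SM(\A,\mathbb{R})$.

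\textbf{Step 2: the monotone sequence.} Fix $g_n \in \Sim(\A,\V)$ with $g_n \to f$ pointwise and define
$$ f_n(x) := \begin{cases} g_n(x), & \text{if } \|g_n(x)\|_\V \le \|f(x)\|_\V\,(1-2^{-n}) ,\\ 0, & \text{otherwise.} \end{cases} $$
\begin{itemize}
\item $f_n$ is $\A$-simple. The set $\{x : |g_n(x)| \le (1-2^{-n})|f(x)|\}$ belongs to $\A$ because $|f|$ is measurable by Step 1, and $f_n$ takes finitely many values.
\item $f_n \to f$. If $f(x)=0$, then $f_n(x)=0$ for every $n$. If $f(x)\ne 0$, then $|g_n(x)| \to |f(x)|$. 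This alone does not guarantee $|g_n(x)| \le (1-2^{-n})|f(x)|$ eventually, so the truncation must be adjusted.
\end{itemize}
The fix is to scale instead of truncate. Put
$$ h_n(x) := \min\!\Big(1, \tfrac{(1-2^{-n})|f(x)|}{|g_n(x)|}\Big)\, g_n(x). $$
This is no longer simple, so instead I would quantize $|f|$ from below. Let $r_n$ be the standard dyadic simple approximation of $|f|$, so $0 \le r_n \nearrow |f|$ with $r_n < |f|$ wherever $|f|>0$. Then set
$$ \phi_n := \max_{k\le n} \Big( r_k \cdot \chi_{\{g_k \ne 0\}} \, \tfrac{g_k}{|g_k|} \Big), $$
where the maximum is taken in the sense of choosing, at each $x$, the term of largest norm. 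This produces simple functions whose norms are nondecreasing and bounded by $|f|$.

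\textbf{Step 3: convergence, the main obstacle.} The hardest part is showing that such a choice still converges to $f$ in direction. Since $g_k(x)/|g_k(x)| \to f(x)/|f(x)|$ when $f(x) \ne 0$, the natural construction is
$$ f_n(x) := r_n(x)\, \frac{g_n(x)}{|g_n(x)|} \quad \text{on } \{g_n\ne 0\}, \qquad f_n(x) := 0 \text{ otherwise}. $$
This $f_n$ is $\A$-simple, because each of $r_n$ and $g_n$ takes finitely many values on a common finite measurable partition. On $\{f\ne0\}$ we eventually have $g_n\ne 0$, so
$$ |f_n(x)| = r_n(x) \nearrow |f(x)| $$
and $f_n(x) \to f(x)$. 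On $\{f=0\}$ we have $r_n=0$, hence $f_n=0$. The one remaining defect is that $|f_n| = r_n\chi_{\{g_n\ne0\}}$ may drop to $0$ at the finitely many indices where $g_n(x)=0$ while $f(x)\neq 0$. To remove these drops, I would replace $g_n$ by $g_n + \chi_{\{g_n=0\}}\, g_{m}$ for an earlier nonzero choice. More simply, I would set $f_n := r_n\, u_n$, where $u_n := g_{k}/|g_k|$ with $k \le n$ the largest index such that $g_k(x)\ne0$, and $u_n := 0$ if no such index exists. This $u_n$ is still simple, being built from finitely many simple functions. With this choice $|f_n| = r_n \chi_{\{\exists k\le n:\, g_k\ne0\}}$ is nondecreasing, bounded by $|f|$, and converges to $|f|$, while $u_n(x) \to f(x)/|f(x)|$ on $\{f\ne0\}$.
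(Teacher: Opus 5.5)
Your proof is correct. The paper gives no proof of its own here; it only quotes the result from Dobrakov--Panchapagesan, so there is no in-paper argument to compare routes with, and yours serves as a self-contained verification.

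Step 1 is fine as written. The construction that actually proves the second claim is the last one in Step 3. Take $r_n$ to be a nondecreasing simple approximation of $|f|$ from below, and let $u_n(x)=g_k(x)/|g_k(x)|$ for the \emph{largest} $k\le n$ with $g_k(x)\neq 0$, with $u_n(x)=0$ if no such $k$ exists. Then the following hold.
\begin{itemize}
\item $f_n=r_nu_n$ is simple. The function $u_n$ is constant-valued on each of the finitely many measurable pieces $\{g_k\neq0\}\cap\bigcap_{j=k+1}^{n}\{g_j=0\}$, $1\le k\le n$, and on $\bigcap_{j\le n}\{g_j=0\}$.
\item $|f_n|=r_n\chi_{E_n}$ with $E_n=\bigcup_{k\le n}\{g_k\neq 0\}$ increasing. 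This is a product of nonnegative nondecreasing sequences, hence nondecreasing, and it is bounded by $|f|$.
\item On $\{f\neq0\}$ you eventually have $g_n(x)\neq 0$, hence $u_n(x)=g_n(x)/|g_n(x)|\to f(x)/|f(x)|$. On $\{f=0\}$ everything vanishes because $r_n=0$ there.
\end{itemize}

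When writing this up, delete the Step 2 detours. The truncation and the scaling are dead ends, as you already note. The ``take the term of largest norm'' definition of $\phi_n$ is not well defined without a tie-breaking rule. The norms $r_k(x)$ coincide for many $k$ once the dyadic approximation stabilizes. If you broke ties toward small $k$, the direction would freeze at some $g_k(x)/|g_k(x)|$ and convergence would fail. Choosing the largest index is exactly what your final $u_n$ does.

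Also, your remark that the standard dyadic approximation satisfies $r_n<|f|$ on $\{|f|>0\}$ is false: it equals $|f|$ at dyadic values once $n$ is large. You only ever use $r_n\le |f|$, so nothing breaks.
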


\begin{theorem} \label{11}
	Let $F \in \SM(\A, L(\V,\W))$.
	Then there exists a sequence $\{s_n\}_1^\infty \in \Sim(\A, \V)^{\mathbb{N}}$ such that
	$$\forall x \in X \::\: \left\| F(x)\big[s_n(x)\big] \right\|_{\W} \xrightarrow{n\to\infty} \|F(x)\|_{op} \qquad\text{and}\qquad \|s_n(x)\| \in \{0,1\}$$
\end{theorem}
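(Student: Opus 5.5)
Since $F \in \SM(\A, L(\V,\W))$, Theorem \ref{9} shows that $\im F$ is separable. Fix a countable dense subset $\{T_k\}_{k=1}^\infty$ of $\im F$. For each $k$ and each $j \in \mathbb{N}$ choose a unit vector $v_{k,j} \in \V$ with $\|T_k v_{k,j}\|_\W \geq \|T_k\|_{op} - 1/j$; if $T_k = 0$, set $v_{k,j} := 0$. The idea is that, near the point $F(x)$, the vectors $v_{k,j}$ are almost norming for $F(x)$. The map $x \mapsto \|F(x)\|_{op}$ is measurable by Theorem \ref{10}, and for a fixed $v \in \V$ the map $x \mapsto \|F(x)v\|_\W$ is measurable, being the composition of $F$ with the continuous map $T \mapsto \|Tv\|$. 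Because the family $\{v_{k,j}\}$ is countable, we can enumerate it as $\{u_m\}_{m=1}^\infty$; all of these are unit vectors or $0$. The key density fact is
$$\forall x:\quad \sup_m \|F(x)u_m\|_\W = \|F(x)\|_{op}.$$
To see this, take $T_k$ with $\|T_k - F(x)\|_{op} < \varepsilon$. Then $\|F(x)v_{k,j}\| \geq \|T_k\| - 1/j - \varepsilon \geq \|F(x)\| - 1/j - 2\varepsilon$.

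Next comes the construction of $s_n$. For each $n$ define measurable sets recursively. Let
$$A_{n,m} := \Big\{x : \|F(x)u_m\| > \|F(x)\|_{op} - \tfrac1n \Big\} \setminus \bigcup_{i<m} A_{n,i}, \qquad m \leq n,$$
and put $s_n := \sum_{m=1}^n \chi_{A_{n,m}} u_m$. Each $s_n$ is $\A$-simple. Its values are unit vectors or $0$, so $\|s_n(x)\| \in \{0,1\}$. If some $u_m$ happens to be $0$ while $F(x) \neq 0$, it cannot be the selected vector, since then $0 > \|F(x)\| - 1/n$ fails for large $n$; in any case $\|s_n(x)\| \in \{0,1\}$ holds automatically. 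On the remaining set, where no $m \leq n$ qualifies, put $s_n = 0$.

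For convergence, fix $x$. If $F(x) = 0$, both sides are zero. Otherwise, for any $\varepsilon > 0$, the density fact gives an index $m_0$ with $\|F(x)u_{m_0}\| > \|F(x)\| - \varepsilon$. For $n \geq \max(m_0, 1/\varepsilon)$, the point $x$ lies in $A_{n,m}$ for some $m \leq m_0$. Indeed, $x$ satisfies the defining inequality for $m_0$ once $1/n \leq$ the gap, so I will phrase the sets with the threshold $\|F(x)\| - 1/n$ and take $n$ large enough that $\|F(x)u_{m_0}\| > \|F(x)\| - 1/n$. This requires $\|F(x)u_{m_0}\| > \|F(x)\| - 1/n$, so I will choose $m_0$ as a function of $n$ via the sup. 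Concretely: for each $n$, by the density fact some $m$ satisfies the inequality; if the first such $m$ exceeds $n$, then $s_n(x) = 0$. This happens only for finitely many $n$? Not quite, since the first index may grow with $n$.

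This indexing is the main obstacle. I fix it by replacing the threshold $\tfrac1n$ with ``$\|F(x)u_m\| \geq \max_{i\leq n}\|F(x)u_i\|$'', i.e. letting $s_n(x)$ be the first $u_m$ with $m \leq n$ maximizing $\|F(x)u_m\|$. These sets are measurable, since they are finite Boolean combinations of sets of the form $\{\|Fu_m\| \geq \|Fu_i\|\}$, and $s_n$ is simple. Then $\|F(x)s_n(x)\| = \max_{i\leq n}\|F(x)u_i\|$, which increases to $\sup_m \|F(x)u_m\| = \|F(x)\|_{op}$ by the density fact. One last adjustment keeps the norm in $\{0,1\}$: if the maximizer is $0$, which only happens when $F(x)u_i = 0$ for all $i \leq n$, I pick any nonzero $u_i$ instead when one exists, which does not change the value; if none exists, I keep $s_n(x) = 0$.
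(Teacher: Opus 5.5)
Your final construction is correct, and it takes a genuinely different route from the paper's.

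\textbf{The paper's route.} The paper starts from Theorem~\ref{9}(c). It takes elementary (countably-valued) $F_n$ with $\|F_n(x) - F(x)\|_{op} \le 1/n$ uniformly in $x$. It then picks a nearly norming unit vector for each of the countably many values of $F_n$, which gives elementary $\varphi_n$ with $\|F(x)\varphi_n(x)\|_\W \rightrightarrows \|F(x)\|_{op}$. Finally it needs a diagonal truncation through the sets $B_k^n$. This passes from countably-valued to finitely-valued functions while keeping pointwise convergence.

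\textbf{Your route.} You use separability of $\im F$ (Theorem~\ref{9}(b)) to build a single countable family $\{u_m\}$ of unit (or zero) vectors. This family is norming for every operator $F(x)$ simultaneously. You then make a measurable pointwise selection of the best vector among $u_1,\dots,u_n$. Measurability only needs each $x \mapsto \|F(x)u_m\|_\W$ to be measurable, and you justify that.

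\textbf{What each buys.} Your approach removes the bookkeeping entirely. It also gives a slightly stronger conclusion: monotone convergence $\|F(x)s_n(x)\|_\W = \max_{i\le n}\|F(x)u_i\|_\W \nearrow \|F(x)\|_{op}$. The paper's route additionally produces the intermediate uniform elementary approximation and ensures $s_n = 0$ on $[F=0]$. Neither of these is required by the statement.

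\textbf{Clean-up for a final write-up.} Drop the abandoned threshold-based sets $A_{n,m}$ and the discussion of their indexing problem. The final ``adjustment'' replacing a zero maximizer by a nonzero $u_i$ is unnecessary. Every $u_m$ has norm $0$ or $1$, so $\|s_n(x)\| \in \{0,1\}$ holds automatically.
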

\begin{proof}
	According to the preceding assertion there exists a sequence of elementary functions $\{F_n\}$ in $\mathcal{E}\big[\A, L(\V, \W)\big]$, converging to $F$ uniformly.
	Passing to a subsequence if necessary, we can assume $\|F_n(x) - F(x)\|_{op} \leq 1/n$.
	By definition each function has a representation of the form:
	$$F_n = \sum_{i=1}^{\infty} \chi_{A_{n,i}} \mathcal{F}_{n,i} \quad\text{where}\quad A_{n,i} \in \A \quad X = \bigsqcup_{i=1}^{\infty}A_{n,i} \quad \mathcal{F}_{n,i} \in L(\V, \W)$$
	For each $n\in\mathbb{N}$ there exists a sequence of vectors, satisfying:
	$$\{v_{n,i} \in \V\}_{i=1}^\infty \::\: \|v_{n,i}\|=1, \quad \|\mathcal{F}_{n,i}[v_{n,i}]\| \geq \|\mathcal{F}_{n,i}\|_{op} - 1/n$$
	Define the elementary sequence $\varphi_n = \sum_{i=1}^{\infty} \chi_{A_{n,i}} v_{n_i}$.
	Then we have:
	\begin{multline*}
		\big|\|F(x)[\varphi_n(x)]\|  - \|F(x)\|_{op}\big|     \leq     \big|\|F(x)[\varphi_n(x)]\| - \|F_n(x)[\varphi_n(x)]\|\big|   +  \\   \big|\|F_n(x)[\varphi_n(x)]\| - \|F_n(x)\|_{op}\big|    +      \big|  \|F_n(x)\|_{op} - \|F(x)_{op}\| \big|    \leq 3/n
	\end{multline*}
	and therefore $\|F(x)[\varphi_n(x)]\|  \overset{n \to \infty}{\rightrightarrows}  \|F(x)\|_{op}$.
	
	 Now we construct a simple sequence $\{s_n\}_{n=1}^{\infty}$ in the following way:
	 $$B_k^n := \bigcup_{i=k}^{n} \bigsqcup_{j=1}^{n} A_{i,j} \quad B_{n+1}^{n} := \varnothing \quad 1 \leq k \leq n+1$$
	 $$
	 s_n : 
		\begin{cases}
			\begin{aligned}
				& A_{k,j} \setminus B_{k+1}^{n} \cap [F \neq 0] \xrightarrow{\quad} v_{k,j} \\
				& A_{k,j} \setminus B_{k+1}^{n} \cap [F = 0] \xrightarrow{\quad} 0
			\end{aligned}
			& \quad 1\leq k,j \leq n
		\end{cases}
	$$
	It remains to verify pointwise convergence of $\{\|F(x)[s_n(x)]\|\}_{n=1}^{\infty}$.
	Fix $\epsilon>0$ and $x\in X$.
	Take $N_1 \in \mathbb{N}$ such that $3/N_1 < \epsilon$.
	Then $\exists N_2 \in \mathbb{N} \::\: x \in A_{N_1, N_2} \subseteq \bigcap_{n \geq N_1 \lor N_2} B_1^n$.
	Fix $n \geq N_1 \lor N_2$.
	If $F(x)=0$, then by construction $s_n(x)=0$ and $\|F(x)[s_n(x)]\| = 0$.
	If $F(x) \neq 0$, let $N_1 \leq k \leq n$ be the maximal number satisfying $x \in \bigsqcup_{j=1}^{n} A_{k,j}$.
	By construction then $s_n(x) = v_{k,j}$ for some $1 \leq j \leq n$.
	Consequently
	$$\big|\|F(x)[s_n(x)]\|  - \|F(x)\|_{op}\big| = \big|\|F(x)[\varphi_k(x)]\|  - \|F(x)\|_{op}\big| \leq 3/N_1 < \epsilon$$	
\end{proof}

Now suppose we have a countably subadditive submeasure $\lambda : \A \to [0,\infty]$.
Recall that the Lebesgue completion $(\overline{\A}_\lambda, \overline{\lambda})$ is defined as $\overline{\A}_\lambda := \{A \cup N \::\: A \in \A, \: N\subseteq N' \in \A, \: \lambda(N')=0\}$, with $\lambda$ extended to the countably subadditive submeasure $\overline{\lambda}$ on $\overline{\A}_\lambda$ in an obvious way.
The set of $\lambda$-measurable $Y$-valued functions is defined as $\mathcal{M}(\lambda, Y) := \mathcal{M}(\overline{\A}_\lambda, Y)$.
A function $f:X \to Y$ is said to be $\lambda$-essentially separably valued, if $\exists N \in \A \::\: \lambda(N)=0, \: f(X \setminus N)$ is separable.

\begin{definition}
	A function $f:X\to Y$ is called strongly $\lambda$-measurable, if it is a $\lambda$-a.e limit of a sequence of $\A$-simple functions.
	The corresponding space is denoted by $\SM(\lambda, Y)$.
\end{definition}

\begin{theorem}[\autocite{Dobrakov_Panchapagesan}, Theorem 2]
	For a function $f : X \to \V$ the following conditions are equivalent:
	\begin{enumerate}[label=\alph*)]		
		\item
		$f \in \SM(\lambda, \V)$
		
		\item
		$f \in \mathcal{M}(\lambda, \mathcal{B}_\V)$ and is $\lambda$-essentially separably valued
				
		\item
		$\displaystyle \exists\: \{f_n\}_1^\infty \in \mathcal{E}(\A, \V)^{\mathbb{N}}, \: N \in \A \::\: \lambda(N)=0, \: f_n \overset{X \setminus N}{\rightrightarrows} f$
	\end{enumerate}
	Consequently, $\SM(\lambda, \V)$ is closed under sequential $\lambda$-a.e. limits.
\end{theorem}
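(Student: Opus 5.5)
The plan is to prove the cycle a) $\Rightarrow$ b) $\Rightarrow$ c) $\Rightarrow$ a), reducing each step to the everywhere-defined version, Theorem \ref{9}, applied on the complement of a $\lambda$-null set. Since $\lambda$ is countably subadditive, a countable union of $\lambda$-null sets in $\A$ is again $\lambda$-null. So finitely or countably many exceptional sets can always be merged into one $N\in\A$ with $\lambda(N)=0$.

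\textbf{a) $\Rightarrow$ b).} Let $s_n\in\Sim(\A,\V)$ with $s_n\to f$ outside some $N\in\A$, $\lambda(N)=0$. Then $\chi_{X\setminus N}s_n$ is $\A$-simple and converges pointwise on all of $X$ to $\chi_{X\setminus N}f$. By Theorem \ref{9}, $\chi_{X\setminus N}f$ is $\A$-measurable and has separable range, so $f(X\setminus N)$ is separable. For a Borel set $B$ we have $f^{-1}(B)=\big((\chi_{X\setminus N}f)^{-1}(B)\setminus N\big)\cup(f^{-1}(B)\cap N)$. The first piece lies in $\A$, and the second is a subset of the null set $N$, so $f^{-1}(B)\in\overline{\A}_\lambda$.

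\textbf{b) $\Rightarrow$ c).} This is the main obstacle: $f$ is only $\overline{\A}_\lambda$-measurable, while the approximants must be $\A$-elementary. First I would take a null $N_0\in\A$ with $f(X\setminus N_0)$ separable, and apply Theorem \ref{9} in the $\sigma$-algebra $\overline{\A}_\lambda$ to $\chi_{X\setminus N_0}f$. That gives $\overline{\A}_\lambda$-elementary $g_n=\sum_i\chi_{C_{n,i}}v_{n,i}$ converging uniformly to it. Each $C_{n,i}$ has the form $A_{n,i}\cup M_{n,i}$ with $A_{n,i}\in\A$ and $M_{n,i}\subseteq N_{n,i}\in\A$, $\lambda(N_{n,i})=0$. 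I would then let $N$ be the union of $N_0$ and all the $N_{n,i}$. This is a countable union, so $\lambda(N)=0$ by countable subadditivity, and it is crucial here that elementary functions have only countably many pieces. Finally set $f_n:=\chi_{X\setminus N}g_n$. On $X\setminus N$ each $C_{n,i}$ coincides with $A_{n,i}$, so $f_n=\sum_i\chi_{A_{n,i}\setminus N}v_{n,i}\in\mathcal{E}(\A,\V)$, and $f_n\rightrightarrows f$ on $X\setminus N$.

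\textbf{c) $\Rightarrow$ a).} On $X\setminus N$ the uniform limit $\chi_{X\setminus N}f$ of the $\A$-elementary $\chi_{X\setminus N}f_n$ lies in $\SM(\A,\V)$ by Theorem \ref{9} c) $\Rightarrow$ a). Hence there are $\A$-simple $s_n$ with $s_n\to\chi_{X\setminus N}f$ everywhere, and in particular $s_n\to f$ $\lambda$-a.e.

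\textbf{Closure under a.e.\ limits.} If $f_k\in\SM(\lambda,\V)$ and $f_k\to f$ outside a null set, I would first use b) to get null sets $N_k$, one per $k$, outside which each $f_k$ is $\A$-measurable (after modifying on a null set as above) with separable range. I would then take $N$ to be the union of all these sets together with the set of non-convergence, which is null by countable subadditivity. On $X\setminus N$ the functions $\chi_{X\setminus N}f_k$ lie in $\SM(\A,\V)$. Their pointwise limit $\chi_{X\setminus N}f$ therefore lies in $\SM(\A,\V)$ by Theorem \ref{9}, so $f\in\SM(\lambda,\V)$.
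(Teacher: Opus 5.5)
The paper gives no proof of this statement; it is quoted from Dobrakov--Panchapagesan. So there is no internal argument to compare against. Your proof is correct and gives a self-contained derivation from Theorem \ref{9}.

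The organising idea is exactly what is needed. Every exceptional set that appears sits inside a $\lambda$-null set of $\A$. There are only countably many such sets, because elementary functions have countably many pieces. Countable subadditivity then merges them into a single $N$, after which everything reduces to the everywhere-version on $X\setminus N$.

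The b)$\Rightarrow$c) step is the only one with real content, and you handle it correctly. On $X\setminus N$ each $C_{n,i}$ collapses to $A_{n,i}\setminus N$. These sets are pairwise disjoint, belong to $\A$, and cover $X\setminus N$, so $\chi_{X\setminus N}g_n\in\mathcal{E}(\A,\V)$.

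Three small points of precision:
\begin{enumerate}
\item Applying Theorem \ref{9} in $\overline{\A}_\lambda$ presupposes that $\overline{\A}_\lambda$ is a $\sigma$-algebra. This holds because the $\lambda$-null sets of $\A$ form a $\sigma$-ideal, again by monotonicity and countable subadditivity; it is worth one line.
\item In the closure step, ``the set of non-convergence'' need not belong to $\A$. You should adjoin a null set $N'\in\A$ outside of which $f_k\to f$, which is what ``$\lambda$-a.e.'' provides.
\item The sets $N_k$ come most directly from a), exactly as in your a)$\Rightarrow$b) step, which gives $\chi_{X\setminus N_k}f_k\in\SM(\A,\V)$. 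Multiplying the approximating simple functions by $\chi_{X\setminus N}$ then yields $\chi_{X\setminus N}f_k\in\SM(\A,\V)$. This route is cleaner than invoking b), which only gives $\overline{\A}_\lambda$-measurability.
\end{enumerate}
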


To conclude this subsection we also fix notation for the space of bounded $\A$-measurable and $\lambda$-measurable functions:
$$\BSM(\A, Y) := \SM(\A, Y) \:\cap\: B(X,Y) \qquad \BSM(\lambda, Y) := \SM(\lambda, Y) \:\cap\: B(X,Y)$$

\subsection{Definition and basic properties of the Dobrakov integral} \label{12}
For the rest of section \ref{1} we fix an operator-valued measure $\mu : \A \to L(\V, \W)$, countably additive in the norm topology with finite semivariation $\widehat{\mu} < \infty$.
Saying that a property holds $\mu$-almost everywhere refers to any of the submeasures $\overline{\mu}, \widehat{\mu}, \widetilde{\mu}$, all of which give the same null sets.

Integral of a simple function $s\in \Sim(\A, \V)$ over a set $E \in \A$ is defined in a natural way.
If $s = \sum_{i=1}^{m} \chi_{A_i} v_i$ with $\{A_i\}_{i=1}^{m} \in \mathrm{FD}(\A)$ and $v_i \in \V$, then:
$$\int_{E} s \diff{\mu} := \sum_{i=1}^{m} \mu(A_i \cap E)[v_i] \in \W$$
It is easily verified that this integral is independent of the function's representation and is therefore well-defined.

The following fundamental lemma serves as the basis for the definition of Dobrakov integrability.
\begin{lemma}[\autocite{Dobrakov_Panchapagesan}, Theorem 7] \label{13}
	Suppose \:$\Sim(\A, \V) \ni s_n \xrightarrow{\mu\text{-a.e.}} f \in \V^X$.
	Then:
	\begin{enumerate}[label=\alph*)]
		\item
		The following conditions are equivalent:
		\begin{enumerate}[label=\roman*)]
			\item
			$\displaystyle \left\{ \int_{(\cdot)}s_n \diff{\mu} \right\}_{n=1}^\infty$ converges setwise on $\A$
			
			\item
			$\displaystyle \left\{ \int_{(\cdot)}s_n \diff{\mu} \right\}_{n=1}^\infty$ converges uniformly on $\A$
			
			\item
			The family $\displaystyle \left\{ \int_{(\cdot)}s_n \diff{\mu} \right\}_{n=1}^\infty$ is uniformly countably additive
		\end{enumerate}
		
		\item \label{14}
		If $\{s'_n\}_1^\infty$ is another sequence satisfying the above conditions, then
		$$\forall A\in\A: \: \lim_{n\to\infty} \int_{A}s_n \diff{\mu} =  \lim_{n\to\infty} \int_{A}s'_n \diff{\mu}$$
	\end{enumerate}
\end{lemma}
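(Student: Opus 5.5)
Denote $\nu_n(A) := \int_A s_n \diff{\mu}$. Each $\nu_n$ is a finite sum of measures $A \mapsto \mu(A_i \cap A)[v_i]$, so $\nu_n \in \CA\left[\A, \W\right]$.

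\textbf{Part a).} The easy implications come first.
\begin{itemize}
\item ii) $\Rightarrow$ i) is trivial.
\item i) $\Rightarrow$ iii) is exactly the Vitali-Hahn-Saks-Nikodym Theorem \ref{3} applied to $\{\nu_n\}$ on the $\sigma$-algebra $\A$.
\end{itemize}
The substance is iii) $\Rightarrow$ ii).

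I would first show that a uniformly countably additive family is uniformly $\mu$-continuous. The argument is standard. Uniform countable additivity implies $\sup_n \overset{\smile}{\nu_n}(A_k) \to 0$ for every disjoint sequence $\{A_k\}$. Combined with $\nu_n << \widetilde{\mu}$ for each $n$ (Theorem \ref{7} and Proposition \ref{36}), a Drewnowski-type exhaustion gives the following: for each $\epsilon > 0$ there is $\delta > 0$ with $\widetilde{\mu}(A) < \delta \implies \sup_n \|\nu_n(A)\| < \epsilon$. The argument is by contradiction. Otherwise we extract sets $A_k$ with $\widetilde{\mu}(A_k) < 2^{-k}$ and indices $n_k$ with $\|\nu_{n_k}(A_k)\| \geq \epsilon$. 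Passing to $\limsup$-type disjointifications, using countable subadditivity and continuity of $\widetilde{\mu}$, produces a disjoint sequence violating uniform countable additivity.

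Next comes Egorov's theorem for the continuous submeasure $\widetilde{\mu}$. I would realize it via a control measure: by the Bartle-Dunford-Schwartz theorem there is a finite positive $\lambda$ with the same null sets as $\mu$ and $\nu_n << \lambda$ uniformly. Alternatively, I would prove Egorov directly from continuity and countable subadditivity of $\widetilde{\mu}$. Given $\epsilon$, choose $F \in \A$ with $\widetilde{\mu}(X \setminus F) < \delta$ and $s_n \rightrightarrows f$ on $F$.

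Then for any $A \in \A$ we have
$$\|\nu_n(A) - \nu_m(A)\| \leq \|\nu_n(A\cap F) - \nu_m(A \cap F)\| + 2\epsilon .$$
To control the first term, note that on $F$ we have $\sup_{F}|s_n - s_m| \le \eta$ for large $n,m$. Hence
$$\left\|\int_{A\cap F}(s_n - s_m)\diff{\mu}\right\| \leq \eta\, \widehat{\mu}(X),$$
by the definition of semivariation, normalizing the values of the simple function. This uses the standing assumption $\widehat{\mu}(X) < \infty$. So $\{\nu_n\}$ is uniformly Cauchy on $\A$, which proves ii). I expect the uniform absolute continuity step to be the main obstacle, since it requires carefully building the disjoint sequence.

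\textbf{Part b).} Interlace the sequences: $t_{2n} = s_n$, $t_{2n+1} = s'_n$. Then $t_n \to f$ $\mu$-a.e. The family $\{\int t_n \diff{\mu}\}$ is uniformly countably additive as the union of two such families, and the exhaustion argument only uses this union property. Running the iii) $\Rightarrow$ ii) argument for $\{t_n\}$ shows that $\int_A t_n \diff{\mu}$ converges. Hence its two subsequential limits coincide.
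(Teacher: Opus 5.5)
Your proof is correct. The paper gives no argument of its own for this lemma; it is simply quoted from \autocite[Theorem 7]{Dobrakov_Panchapagesan}. What you have written is therefore a self-contained proof, built only from tools the paper already records (Theorem \ref{3}, Proposition \ref{36}, Theorem \ref{7}) plus Egorov's theorem for a continuous countably subadditive submeasure. It follows the standard scheme:
i)$\Rightarrow$iii) by Vitali--Hahn--Saks--Nikodym;
iii)$\Rightarrow$ii) via uniform $\widetilde{\mu}$-continuity of $\{\nu_n\}$ combined with almost uniform convergence;
b) by interlacing.

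The choice that makes it work is running both the exhaustion and Egorov with respect to $\widetilde{\mu}$. This submeasure is continuous by Proposition \ref{36} precisely because $\mu$ is countably additive in norm. By contrast, $\widehat{\mu}$ is not assumed continuous in Subsection \ref{12}, and it enters only through the finite constant in $\|\int_{A\cap F}(s_n-s_m)\diff\mu\|\le\eta\,\widehat{\mu}(X)$. This is exactly what the paper's standing assumptions allow.

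In the original Dobrakov framework, which the paper notes admits measures countably additive only in the strong operator topology, the operator-norm supremation, and hence $\widetilde{\mu}$, need not be continuous. For example, $A\mapsto P_A$, the coordinate projections on $c_0$ over $(\mathbb{N},2^{\mathbb{N}})$, has $\widehat{\mu}\le 1$ but $\|P_{\{m,m+1,\dots\}}\|=1$ for every $m$. So your main route is tied to the paper's restricted setting, which is all that is needed here.

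Two small points for the write-up:
\begin{enumerate}
\item The exhaustion step only uses countable subadditivity of $\widetilde{\mu}$, to make $\limsup_k A_k$ null. Continuity of $\widetilde{\mu}$ is used only in Egorov.
\item What you must extract from iii) is $\sup_n\overset{\smile}{\nu_n}(D_k)\to0$ along disjoint $D_k$ (supremations, not just values), since the offending sets are subsets of tails. You do state this.
\end{enumerate}
In b) you correctly read ``satisfying the above conditions'' as including $s'_n\to f$ $\mu$-a.e.\ for the same $f$.
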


\begin{definition}[Dobrakov integrability, \autocite{Dobrakov_Panchapagesan}, Definition 15] \label{15}
	A function $f \in \V^X$ is Dobrakov integrable, if there exists a sequence $\{s_n\}_1^\infty \in \Sim(\A, \V)^{\mathbb{N}}$, converging $\lambda$-a.e. to f and satisfying any of the conditions (i-iii) of Lemma \ref{13}.
	Such sequences will be called approximating and the corresponding set labeled as $\mathrm{AS}(f,\mu)$.\\
	By assertion \ref{13}-\ref{14} for each $A \in \A$ we have a well-defined integral $\int_{A} f \diff{\mu} := \lim_{n\to\infty} \int_{A}s_n \diff{\mu}$.\\
	The space of integrable functions is denoted by $\mathcal{I}_D(\mu,\V) := \left\{f \in \V^X : \mathrm{AS}(f,\mu) \neq \varnothing \right\}$.\\
	Obviously, $\mathcal{I}_D(\mu,\V)$ forms a linear subspace of $\SM(\mu, \V)$ over $\F$ .
\end{definition}

Of high importance is the next result, asserting for each integrable function the existence of an approximating sequence of a certain kind.
In turn it also yields a useful representation of the semivariation in terms of the integral.

\begin{theorem}[\autocite{Dobrakov_Panchapagesan}, Theorem 8] \label{16}
Suppose $f \in \mathcal{I}_D(\mu,\V)$ and $A \in \A$. Then:
	\begin{enumerate}[label=\alph*)]
		\item
		$\displaystyle \exists \{s_n\}_1^\infty \in \mathrm{AS}(f, \mu) \::\:  |s_n| \nearrow |f| \quad\mu\text{-a.e.}$
		
		\item
		$\displaystyle \widehat{\mu}(A)	   = 	\sup \left\{ \left\| \int_{A} s \diff{\mu} \right\|_{\W} :\: s \in \Sim(\A, \V), \: \|s\|_A \leq 1 \right\}    =	  \sup \left\{ \left\| \int_{A} g \diff{\mu} \right\|_{\W} :\: g \in \mathcal{I}_D(\mu,\V), \: \|g\|_A \leq 1 \right\}$
	\end{enumerate}
\end{theorem}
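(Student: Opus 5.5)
For part a), I will start from an arbitrary approximating sequence $\{s'_n\} \in \mathrm{AS}(f,\mu)$ and a sequence $\{t_n\} \subset \Sim(\A,\V)$ given by Theorem \ref{10} (applied to a strongly $\A$-measurable modification of $f$ off a $\mu$-null set) with $t_n \to f$ and $|t_n| \nearrow |f|$ pointwise. The issue is that $\{t_n\}$ need not satisfy the conditions of Lemma \ref{13}. To obtain integrals that converge setwise, I will combine the two sequences. By Egorov-type arguments relative to the continuous submeasure $\widetilde{\mu}$ (Proposition \ref{36}), I will pass to a subsequence so that $s'_n$ and $t_n$ are uniformly close to $f$ off sets of small semivariation. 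On the sets where $|s'_n| \le |f|$ is violated, I will replace $s'_n$ by a truncation $s_n := s'_n \cdot \min\bigl(1, |t_n|/|s'_n|\bigr)$ (with value $0$ where $s'_n = 0$), which is again $\A$-simple.

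By construction $|s_n| = \min(|s'_n|, |t_n|)$, so $|s_n|$ is dominated by $|f|$. Monotonicity can then be forced by a further refinement, for instance replacing $s_n$ by a rescaling so that $|s_n| = \min_{k\ge n}$-type envelopes. Convergence $s_n \to f$ a.e. holds because both $|s'_n|$ and $|t_n|$ tend to $|f|$. The key step is to show that $\{\int_{(\cdot)} s_n\diff{\mu}\}$ is still uniformly countably additive. For this I will estimate
$$\Bigl\|\int_A (s_n - s'_n)\diff{\mu}\Bigr\| \le \widehat{\mu}(A)\cdot \sup_A |s_n - s'_n|,$$
using the simple-function version of part b). Outside a set $E_k$ of small $\widehat{\mu}$ (obtained from Egorov and continuity), the difference $s_n - s'_n$ is uniformly small. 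On $E_k$ itself, I will use the uniform countable additivity of the $s'_n$-integrals together with the bound $|s_n|\le|s'_n|$.

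This last bound is the main obstacle. A pointwise radial shrink of $s'_n$ does not obviously give control of $\|\int_{E} s_n \diff{\mu}\|$ by the $s'_n$-integrals, because the Dobrakov integral lacks absolute integrability. I would try to handle it by splitting $E$ according to the finitely many level sets of the shrinking factor, which are $\A$-sets on which the factor is a constant $c\in[0,1]$, and then use linearity: $\int_{E\cap B} c\, s'_n \diff{\mu} = c\int_{E\cap B} s'_n\diff{\mu}$. However, the number of level sets grows with $n$, so a uniform bound is needed. If this fails, the fallback is to rely on the $L_1$-type seminorm of Dobrakov: by the theory in \autocite{Dobrakov_1}, it is monotone in $|g|$ and continuous.

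For part b), the first equality follows from the definition of semivariation by writing $s=\sum \chi_{A_i}v_i$ with $\|v_i\|\le1$, which gives $\int_A s\diff{\mu}=\sum\mu(A_i\cap A)v_i$. The second supremum is at least the first, since simple functions are integrable. For the reverse inequality, I take $g$ with $\|g\|_A\le1$ and, by part a), $s_n\in\mathrm{AS}(g,\mu)$ with $|s_n|\le|g|\le1$ on $A$. Then $\|\int_A g\diff{\mu}\| = \lim\|\int_A s_n\diff{\mu}\|\le\widehat{\mu}(A)$.
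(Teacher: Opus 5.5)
The paper gives no proof of this statement; it is quoted from Dobrakov--Panchapagesan, so your argument has to stand on its own.

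Part b) is fine. The exceptional $\widehat{\mu}$-null set, where $|s_n|\le|g|$ may fail, is harmless because $\mu$ vanishes on its measurable subsets.

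Part a), however, has a genuine gap at exactly the step you flag as the main obstacle: you never show that $\{\int_{(\cdot)} s_n\diff\mu\}$ is uniformly countably additive.
\begin{itemize}
\item The bound $|s_n|\le|s'_n|$ gives no control in Dobrakov's non-absolute setting, as you note yourself.
\item Your fallback does not apply. Continuity of $\widehat{\mu}_1(f,\cdot)$ is, by definition, the statement $f\in\mathcal{L}_1(\mu,\V)$, which is a proper subclass of $\mathcal{I}_D(\mu,\V)$. The paper points out that integrable functions can have $\widehat{\mu}_1(f,X)=\infty$, whereas continuity forces finiteness by Theorem \ref{25}.
\item The Egorov step does not deliver what you use. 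Egorov relative to $\widetilde{\mu}$ produces exceptional sets of small $\widetilde{\mu}$, not of small $\widehat{\mu}$, since in this section $\widehat{\mu}$ is only assumed finite.
\item Monotonicity is only gestured at: $\min(|s'_n|,|t_n|)$ is not monotone, and $\inf_{k\ge n}|s'_k|$ is not simple.
\end{itemize}

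What is missing is a bound that does not depend on the number of level sets. It comes from the fact that your shrinking factor is real-valued with values in $[0,1]$.
\begin{itemize}
\item Write $c_n=\sum_j c_{n,j}\chi_{B_{n,j}}$, where the $B_{n,j}$ form a partition and $c_{n,j}\in[0,1]$, and set $\nu_n:=\int_{(\cdot)}s'_n\diff\mu$. Then $\int_E c_ns'_n\diff\mu=\sum_j c_{n,j}\,\nu_n(E\cap B_{n,j})$.
\item Since $[0,1]^m=\mathrm{conv}\{0,1\}^m$, this vector lies in the convex hull of $\{\nu_n(F):F\in\A,\ F\subseteq E\}$. Hence $\|\int_E c_ns'_n\diff\mu\|\le\overset{\smile}{\nu_n}(E)$.
\item A standard disjointification argument shows that uniform countable additivity of $\{\nu_n\}$ implies $\sup_n\overset{\smile}{\nu_n}(E_k)\to0$ for every $E_k\searrow\varnothing$.
\item Apply this to the tails $E_N=\bigcup_{i>N}A_i$ of a disjoint sequence. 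This gives uniform countable additivity of the new integrals directly, with no Egorov argument.
\end{itemize}

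Monotonicity then follows from the same bound.
\begin{itemize}
\item Let $w_n:=\inf_{k\ge n}|s'_k|$ and take the dyadic truncations $u_n:=\min\bigl(n,2^{-n}\lfloor 2^n w_n\rfloor\bigr)$. These are simple, satisfy $u_n\le\inf_{k\ge n}|s'_k|$, and $u_n\nearrow|f|$ $\mu$-a.e.
\item Set $s_n:=(u_n/|s'_n|)\,s'_n$, with $s_n=0$ where $s'_n=0$.
\item The factor again lies in $[0,1]$, $|s_n|=u_n$, and $s_n\to f$ $\mu$-a.e. The auxiliary sequence $\{t_n\}$ is not needed at all.
\end{itemize}
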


Some basic properties of the integral are summarized in the following theorem.

\begin{theorem}[\autocite{Dobrakov_1}, Theorem 3, Theorem 14] \label{17}
	The operation $\displaystyle \int_{(\cdot)} (\cdot) \diff{\mu} : \A \times \mathcal{I}_D(\mu,\V) \to \W$ \: has the following properties:
	\begin{enumerate}[label=\alph*)]
		\item
		$\displaystyle{  \int_{A} (\cdot) \diff{\mu} \:\in\: \mathrm{Lin}_{\F}[\mathcal{I}_D(\mu,\V), \W]   }$
		
		\item
		$\displaystyle{   \int_{(\cdot)} f \diff{\mu} \:\in\: \CA[ \A, \W ] , \quad  \int_{(\cdot)} f \diff{\mu} \:<<\:  \tilde{\mu} \leq \widehat{\mu} }$ 
		
		\item
		$\displaystyle{  \forall f \in \mathcal{I}_D(\mu,\V), \: A \in \A \::\: \left\| \int_{A} f \diff{\mu} \right\|_{\W} \leq \|f\|_A \cdot \widehat{\mu}(A) }$
	\end{enumerate}
\end{theorem}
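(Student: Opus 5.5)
The plan is to prove the three properties first for simple functions and then pass to general $f \in \mathcal{I}_D(\mu,\V)$ through an approximating sequence, using Lemma \ref{13} and Theorem \ref{16}.

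For (a), linearity on $\Sim(\A,\V)$ follows directly from the definition, since the integral does not depend on the representation. For $f,g \in \mathcal{I}_D(\mu,\V)$ with $\{s_n\}\in\mathrm{AS}(f,\mu)$, $\{t_n\}\in\mathrm{AS}(g,\mu)$ and scalars $\alpha,\beta$, the sequence $\alpha s_n+\beta t_n$ converges $\mu$-a.e. to $\alpha f+\beta g$. Its indefinite integrals are $\alpha\int_{(\cdot)}s_n\diff{\mu}+\beta\int_{(\cdot)}t_n\diff{\mu}$, which converge setwise, so condition (i) of Lemma \ref{13} holds. Hence $\{\alpha s_n+\beta t_n\}\in\mathrm{AS}(\alpha f+\beta g,\mu)$. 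By Lemma \ref{13}\ref{14} the limit is independent of the approximating sequence, which gives linearity.

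For (b), fix $\{s_n\}\in\mathrm{AS}(f,\mu)$. Each $\int_{(\cdot)}s_n\diff{\mu}=\sum_i\mu(\cdot\cap A_i)[v_i]$ is a finite sum of $\W$-valued countably additive measures, because $\mu$ is countably additive in operator norm. These measures converge setwise to $\int_{(\cdot)}f\diff{\mu}$, so the Vitali–Hahn–Saks–Nikodym Theorem \ref{3} makes the limit countably additive. For the absolute continuity, Theorem \ref{7} reduces the matter to showing that $\widetilde\mu(A)=0$ forces $\int_A f\diff{\mu}=0$. Since $\widetilde\mu(A)=0$, Proposition \ref{36} gives $\mu(B)=0$ for every measurable $B\subseteq A$, so $\int_A s_n\diff{\mu}=0$ for all $n$, and the limit vanishes. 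The inequality $\widetilde\mu\le\widehat\mu$ holds because restricting to a single nonzero vector in the definition of semivariation (equivalently, using Theorem \ref{5} with $\|T\|_{op}\le$ sup over unit vectors) gives $\|\sum\lambda_i\mu(A_i)\|\le\widehat\mu$ for $|\lambda_i|\le1$.

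For (c), assume $\|f\|_A<\infty$ and $\widehat\mu(A)<\infty$, since otherwise there is nothing to prove. Theorem \ref{16}(a) gives $\{s_n\}\in\mathrm{AS}(f,\mu)$ with $|s_n|\nearrow|f|$ $\mu$-a.e. The step I expect to be the main obstacle is the exceptional null set $N$. Replace $s_n$ by $s_n\chi_{X\setminus N}$ with $N\in\A$ and $\widehat\mu(N)=0$. This does not change any integral, because $\mu$ vanishes on subsets of $N$, and it ensures $\|s_n\|_A\le\|f\|_A$ (with the sup norm understood as an essential sup). For a simple $s=\sum\chi_{A_i}v_i$ on $A$, the definition of semivariation with normalized vectors gives $\|\int_A s\diff{\mu}\|\le\|s\|_A\widehat\mu(A)$. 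Applying this to each $s_n$ and letting $n\to\infty$ yields the bound.
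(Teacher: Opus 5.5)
The paper does not prove this theorem itself. It is quoted from \autocite{Dobrakov_1} (Theorems 3 and 14), so there is no in-paper argument to compare your route against. Judged on its own, your derivation from the results stated before it is correct:
\begin{itemize}
\item For (a), the linear-combination argument with condition (i) of Lemma \ref{13} gives linearity of the integral, and also that $\mathcal{I}_D(\mu,\V)$ is a linear space.
\item For (b), Theorem \ref{3} gives countable additivity. Theorem \ref{7}, applied to the countably subadditive submeasure $\widetilde\mu$, gives absolute continuity.
\item For (c), you take the sequence from Theorem \ref{16}(a) and multiply it by $\chi_{X\setminus N}$ for a measurable $\widehat\mu$-null set $N$. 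This changes no integral, because $\mu$ vanishes on measurable subsets of $N$. It also gives the pointwise bound $|s_n|\le|f|$. Together with the elementary estimate for simple functions, this yields the inequality.
\end{itemize}

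A few small remarks.

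In (c), the case $\|f\|_A=\infty$, $\widehat\mu(A)=0$ is not vacuous under the convention $0\cdot\infty=0$. It is immediate, though: $\widehat\mu(A)=0$ forces $\mu(B)=0$ for every measurable $B\subseteq A$, so $\int_A s_n\diff\mu=0$ for all $n$. Also, $\widehat\mu<\infty$ is a standing assumption and need not be imposed.

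Theorem \ref{16}(b), combined with the scaling from (a), already contains (c) in one line. Going through Theorem \ref{16}(a), as you do, is the better choice, because the second equality in \ref{16}(b) presupposes exactly the inequality you are proving.

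In (b) you could avoid Theorem \ref{7} entirely. By Lemma \ref{13}(ii), $\int_{(\cdot)}s_n\diff\mu\to\int_{(\cdot)}f\diff\mu$ uniformly on $\A$. For $s_n=\sum_{i=1}^{m_n}\chi_{A_i}v_i$, each $\int_{(\cdot)}s_n\diff\mu$ is bounded in norm by $m_n\|s_n\|_X\,\widetilde\mu(\cdot)$. These two facts give the $\varepsilon$--$\delta$ absolute continuity directly.
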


We will also need the following result describing how the integral interacts with bounded linear operators.

\begin{theorem}[\autocite{Dobrakov_1}, p. 526] \label{18}
	Let $\W'$ be another Banach space and $T \in L(\W, \W')$. Then:
	\begin{enumerate}[label=\alph*)]
		\item
		$\mathcal{I}_D(\mu,\V) \subseteq \mathcal{I}_D(T\mu, \V)$		
		
		\item
		$\displaystyle{  \forall f \in \mathcal{I}_D(\mu,\V), \: A \in \A \::\:  \int_{A} f \diff{[T\mu]} = T\left[\int_{A} f \diff{\mu}\right] }$
	\end{enumerate}
\end{theorem}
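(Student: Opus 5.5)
The plan is to prove both parts simultaneously, working directly from Definition \ref{15} and checking the conditions of Lemma \ref{13} for the composed measure.

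\emph{Setting up.} Recall that $T\mu \in \CA\big[\A, L(\V,\W')\big]$ and $\svar_{\V,\W'}(T\mu,\cdot) \leq \|T\|_{op}\,\widehat{\mu}$. In particular $T\mu$ has finite semivariation, so the integration theory applies to it. For a simple function $s = \sum_{i=1}^m \chi_{A_i} v_i$ the definition of the integral and linearity of $T$ give
\[
\int_E s \diff{[T\mu]} = \sum_{i=1}^m T\big(\mu(A_i\cap E)[v_i]\big) = T\left[\int_E s \diff{\mu}\right],
\]
so both statements hold on $\Sim(\A,\V)$.

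\emph{Transferring approximating sequences.} Let $f \in \mathcal{I}_D(\mu,\V)$ and take $\{s_n\} \in \mathrm{AS}(f,\mu)$. The first point is almost-everywhere convergence. Every $\widehat{\mu}$-null set $N$ is $T\mu$-null, because $\widehat{T\mu}(N) \leq \|T\|_{op}\widehat{\mu}(N) = 0$. Hence $s_n \to f$ holds $T\mu$-a.e. as well.

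The second point is condition (i) of Lemma \ref{13}. For each $A\in\A$ the sequence $\int_A s_n \diff{\mu}$ converges in $\W$. By the identity for simple functions, $\int_A s_n \diff{[T\mu]} = T\int_A s_n\diff{\mu}$, and this converges in $\W'$ by continuity of $T$. Therefore $\{s_n\} \in \mathrm{AS}(f,T\mu)$, which proves (a). Passing to the limit in the identity then yields (b):
\[
\int_A f\diff{[T\mu]} = \lim_n T\int_A s_n\diff{\mu} = T\int_A f\diff{\mu}.
\]

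\emph{Expected obstacle.} The only step needing care is the null-set bookkeeping. Definition \ref{15} says "$\lambda$-a.e." loosely, so I have to fix that it means $\mu$-a.e. with respect to the common null sets of $\overline{\mu},\widehat{\mu},\widetilde{\mu}$. I also have to confirm the domination $\widehat{T\mu}\le\|T\|_{op}\widehat{\mu}$, which follows directly from the defining supremum: $\big\|\sum T\mu(A_i)v_i\big\| \le \|T\|_{op}\big\|\sum\mu(A_i)v_i\big\|$. Everything else is continuity and linearity of $T$.
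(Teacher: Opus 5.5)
Your proof is correct. The bound $\widehat{T\mu}\le\|T\|_{op}\,\widehat{\mu}$ makes every $\widehat{\mu}$-null set $\widehat{T\mu}$-null, so any $\{s_n\}\in\mathrm{AS}(f,\mu)$ still converges $T\mu$-a.e.; continuity of $T$ then turns setwise convergence of $\int_{(\cdot)} s_n \diff{\mu}$ into setwise convergence of $\int_{(\cdot)} s_n \diff{[T\mu]} = T\int_{(\cdot)} s_n \diff{\mu}$, which gives $\{s_n\}\in\mathrm{AS}(f,T\mu)$, and (b) follows by passing to the limit. The paper does not prove this statement itself but quotes it from Dobrakov, and your argument is the standard direct verification from the definition.
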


%
%
%

\subsection{Dobrakov semivariation and $L_p$ spaces}

\begin{definition}[\autocite{Dobrakov_2}, Definition 1', p. 694]
	For $1 \leq p < \infty$ the Dobrakov $p$-semivariation of a function $f \in \SM(\mu, [0,\infty])$ on a set $A \in \A$ is defined as:
	$$\widehat{\mu}_p(f, A) := \sup \left\{ \left\| \int_{A} s \diff{\mu} \right\|_{\W}^{1/p} \::\: s \in \Sim(\A, \V), \: \|s(\cdot)\| \leq f^p \right\}$$
	For notational consistency we also define $\widehat{\mu}_\infty$ as essential supremum:
	$$\widehat{\mu}_\infty(f, A) = \esssup\big[f, A, \widehat{\mu}\big] := \inf \big\{ C \geq 0 \::\: f \leq C \;\widehat{\mu}\text{-a.e. on } A \big\}$$
	For $f \in \SM(\mu, \V), \: 1\leq p \leq \infty$ we let $\widehat{\mu}_p(f, A) := \widehat{\mu}_p(|f|, A)$.\\
	The Dobrakov $L_p$-seminorm is defined as $\|f\|_p := \widehat{\mu}_p(f, X)$
\end{definition}

Noticeably, for $1 \leq p < \infty$ the Dobrakov semivariation extends the classical semivariation, since clearly $\widehat{\mu}(A) = \widehat{\mu}_1(1,A)$.
The following equivalent characterizations of $p$-semivariation will be used later on.

\begin{theorem}[\autocite{Dobrakov_2}, Theorems 2, 4] \label{21}
	Suppose $f \in \SM(\mu, [0,\infty])$, \: $A \in \A$ \:and\: $1 \leq p < \infty$. Then:
	\begin{enumerate}[label=\alph*)]
		\item
		$\displaystyle  \widehat{\mu}_p(f, A) = \sup \left\{ \left\| \int_{A} g \diff{\mu} \right\|_{\W}^{1/p} \::\: g \in \mathcal{I}_D(\mu,\V), \: \|g(\cdot)\| \leq f^p  \;\mu\text{-a.e.}\right\}$		
		
		\item
		$\displaystyle  \widehat{\mu}_p(f, A) = \sup \left\{ \svar_{\F,\W}\left( \int_{(\cdot)} g \diff{\mu} \:,\: A \right) \::\: g \in \mathcal{I}_D(\mu,\V), \: \|g(\cdot)\| \leq f^p  \;\mu\text{-a.e.}\right\}^{1/p}$		
		
		\item
		$\displaystyle  \widehat{\mu}_p(f, A) = \sup \left\{   \left(\int_{A} f^p \diff\big[\overline{w^*\mu}\big]\right)^{1/p} \::\: w^* \in \W^*, \: \|w^*\| \leq 1 \right\}$ \qquad (ordinary Lebesgue integral)
	\end{enumerate}
\end{theorem}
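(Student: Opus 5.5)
Part (a) follows by approximating integrable $g$ with simple functions, (b) by realizing scalar combinations of $\nu(A_i)$ as integrals of multiplied densities, and (c) by dualizing the $\W$-norm through functionals and using Proposition \ref{4}. Throughout I use the definition of $\widehat{\mu}_p$ through simple functions, together with the structural results stated above: Theorems \ref{16}, \ref{17} and \ref{18} and Proposition \ref{4}.

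For (a), the inequality ``$\le$'' is immediate, since $\Sim(\A,\V)\subseteq\mathcal{I}_D(\mu,\V)$. For ``$\ge$'', fix $g\in\mathcal{I}_D(\mu,\V)$ with $|g|\le f^p$ off a $\mu$-null set $N$, which may be enlarged to lie in $\A$. By Theorem \ref{16}(a) there is $\{s_n\}\in\mathrm{AS}(g,\mu)$ with $|s_n|\nearrow|g|$ $\mu$-a.e. Replacing $s_n$ by $s_n\chi_{X\setminus N'}$ for a suitable null set $N'\in\A$ keeps the functions simple. It also does not change the integrals, because $\int_{(\cdot)}s\,d\mu\ll\widehat\mu$ by Theorem \ref{17}(b). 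After this modification $|s_n|\le f^p$ everywhere. Since $\int_A s_n\,d\mu\to\int_A g\,d\mu$ by definition of the integral, each $\|\int_A g\,d\mu\|^{1/p}$ is a limit of admissible values, which gives (a).

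For (b), put $\nu=\int_{(\cdot)}g\,d\mu$. By definition,
\[
\svar_{\F,\W}(\nu,A)=\sup\Bigl\|\sum_i c_i\,\nu(A_i)\Bigr\|
\]
over finite disjoint families $A_i\subseteq A$ and scalars $|c_i|\le1$. The key observation is that
\[
\sum_i c_i\,\nu(A_i)=\int_A h\,d\mu, \qquad h=\Bigl(\sum_i c_i\chi_{A_i}\Bigr)g .
\]
The function $h$ is integrable, because multiplying an approximating sequence of $g$ by a scalar simple function produces an approximating sequence of $h$. Moreover $|h|\le|g|\le f^p$ a.e. Hence $\svar_{\F,\W}(\nu,A)$ is bounded by the $p$-th power of the supremum in (a). Conversely, $\svar_{\F,\W}(\nu,A)\ge\|\nu(A)\|$, so the two suprema coincide.

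For (c), I dualize the norm in $\W$ using unit functionals $w^*$. For a simple $s$ with $|s|\le f^p$, Theorem \ref{18} gives $w^*\!\int_A s\,d\mu=\int_A s\,d[w^*\mu]$, where $w^*\mu$ is $\V^*$-valued. For simple functions directly,
\[
\Bigl|\int_A s\,d[w^*\mu]\Bigr|\le\int_A|s|\,d\overline{w^*\mu}\le\int_A f^p\,d\overline{w^*\mu}.
\]
Taking the supremum over $w^*$ and $s$ yields ``$\le$''. For ``$\ge$'', fix $w^*$ and use monotone convergence. The integral $\int_A f^p\,d\overline{w^*\mu}$ is a supremum of $\sum_i\varphi_i\,\overline{w^*\mu}(B_i)$ over finite-valued simple $0\le\varphi=\sum_i\varphi_i\chi_{B_i}\le f^p$ on $A$. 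If $f$ is only $\mu$-measurable, I first change it on an $\A$-null set, which affects neither side.

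By the definition of variation, each $\overline{w^*\mu}(B_i)$ is approximated by $\sum_j\bigl|w^*\mu(A_{ij})[v_{ij}]\bigr|$, with $\{A_{ij}\}_j$ a disjoint family in $B_i$ and $\|v_{ij}\|\le1$. Multiplying each $v_{ij}$ by a unimodular scalar makes every term $w^*\mu(A_{ij})[v_{ij}]$ real and nonnegative. The simple function
\[
s=\sum_{i,j}\varphi_i v_{ij}\chi_{A_{ij}}
\]
then satisfies $|s|\le f^p$ and
\[
\Bigl\|\int_A s\,d\mu\Bigr\|\ge w^*\!\int_A s\,d\mu\approx\sum_i\varphi_i\,\overline{w^*\mu}(B_i).
\]

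The main obstacle is this last approximation. One has to control the errors uniformly in the finitely many indices $i$, and handle points where $f^p=\infty$, where $\varphi$ must stay finite-valued. Both are routine $\varepsilon$-bookkeeping once the construction above is in place.
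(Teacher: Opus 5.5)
The paper gives no proof of this statement. It is imported from Dobrakov (\autocite{Dobrakov_2}, Theorems 2 and 4), so there is no in-paper argument to compare yours against. Your proposal is correct. It is a self-contained derivation from tools the paper already quotes: Theorem \ref{16}(a) plus a null-set cut-off for (a); the identity $\sum_i c_i\nu(A_i)=\int_A\bigl(\sum_i c_i\chi_{A_i}\bigr)g\,d\mu$ combined with (a) for (b); and Hahn--Banach duality on simple functions for (c). You never invoke the H\"older inequality of Theorem \ref{23}, which the paper proves \emph{from} part (c), so your route has no circularity.

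Three steps deserve an extra line.

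In (b), justify the integrability of $h=\sigma g$, with $\sigma=\sum_i c_i\chi_{A_i}$, through Lemma \ref{13}(i). For $\{s_n\}\in\mathrm{AS}(g,\mu)$ one has $\int_B\sigma s_n\,d\mu=\sum_i c_i\int_{B\cap A_i}s_n\,d\mu$, which converges for every $B\in\A$. Hence $\{\sigma s_n\}\in\mathrm{AS}(h,\mu)$, and $\int_A h\,d\mu=\sum_i c_i\nu(A_i)$ because $A_i\subseteq A$.

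In (c), say why replacing $f$ by an $\A$-measurable version changes neither side. The left side is unaffected by the same cut-off as in (a). The right side is unaffected because $\overline{w^*\mu}\le\widehat\mu$ for $\|w^*\|\le1$ by Theorem \ref{5}.

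The step you flag as the main obstacle is harmless. Choose each family $\{A_{ij},v_{ij}\}_j$ with $\sum_j w^*\mu(A_{ij})[v_{ij}]\ge\overline{w^*\mu}(B_i)-\varepsilon$. Then $\|\int_A s\,d\mu\|\ge\int_A\varphi\,d\overline{w^*\mu}-\varepsilon\sum_i\varphi_i$, with only finitely many $i$. The Lebesgue integral of $f^p$ is by definition the supremum over finite-valued simple $0\le\varphi\le f^p$, so points where $f=\infty$ need no separate treatment.
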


Basic properties of Dobrakov semivariation are summarized below.

\begin{theorem}[\autocite{Dobrakov_2}, Theorems 1, 3] \label{22}
	For $1 \leq p \leq \infty$ the Dobrakov semivariation has the following properties:
	\begin{enumerate}[label=\alph*)]
		\item
		\begin{tabular}{p{10cm} p{7cm}}
		$\widehat{\mu}_p(f, \cdot) : \A \to [0,\infty]$  \; is a $\sigma$-subadditive submeasure 					& $f \in \SM(\mu, [0,\infty])$
		\end{tabular}
				
		\item
		\begin{tabular}{p{10cm} p{7cm}}
		$f \leq g \;\mu\text{-a.e.} \;\implies\; \widehat{\mu}_p(f, \cdot) \leq \widehat{\mu}_p(g, \cdot)$ 			& $ f,g \in \SM(\mu, [0,\infty])$
		\end{tabular}
		
		\item
		\begin{tabular}{p{10cm} p{7cm}}
		$\widehat{\mu}_p(f, A) = 0 \;\iff\; f=0 \;\mu\text{-a.e}\text{ on } A$ 							& $f \in \SM(\mu, [0,\infty]), \: A \in \A$
		\end{tabular}
		
		\item
		\begin{tabular}{p{10cm} p{7cm}}
		$\widehat{\mu}_p(\alpha \cdot f, A) = |\alpha| \cdot \widehat{\mu}_p(f, A)$ 						& $\alpha \in \F, \: f \in \SM(\mu, [0,\infty]), \: A \in \A$
		\end{tabular}
		
		\item
		\begin{tabular}{p{10cm} p{7cm}}
		$\widehat{\mu}_p(f+g, A) \:\leq\:  \widehat{\mu}_p(f, A) \:+\: \widehat{\mu}_p(g, A)$						& $f,g \in \SM(\mu, [0,\infty]), \: A \in \A$
		\end{tabular}
		
		\item
		\begin{tabular}{p{10cm} p{7cm}}
		$\displaystyle \inf_{x\in A} f(x) \: \widehat{\mu}(A)^{1/p} \:\leq\: \widehat{\mu}_p(f, A) \:\leq\: \sup_{x\in A} f(x) \: \widehat{\mu}(A)^{1/p}$			& $f \in \SM(\mu, [0,\infty]), \: A \in \A$
		\end{tabular}
	\end{enumerate}
\end{theorem}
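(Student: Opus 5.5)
The plan is to reduce every assertion to statements about ordinary scalar $L_p$ norms, using the representation in Theorem \ref{21}(c). For $1 \le p < \infty$ it gives
$$\widehat{\mu}_p(f,A) = \sup_{\|w^*\|\le 1} \|f\chi_A\|_{L_p(\nu_{w^*})}, \qquad \nu_{w^*} := \overline{w^*\mu}.$$
Each $\nu_{w^*}$ is a positive countably additive measure, since $w^*\mu \in \CA[\A,\F]$. Its null sets contain those of $\widehat{\mu}$, because $\nu_{w^*}(E) \le \svar_{\V,\W}(\mu,E) = \widehat{\mu}(E)$ by Theorem \ref{5}. Hence $f^p$, which is measurable with respect to the completion, has a well-defined Lebesgue integral in $[0,\infty]$ against each $\nu_{w^*}$. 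Each property will be proved for a single $w^*$ and then passed through the supremum. The case $p=\infty$ is handled separately and directly from the definition of $\esssup[f,A,\widehat{\mu}]$.

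For (a), take $A = \bigcup_n A_n$ and write $a_n := \int_{A_n} f^p \diff\nu_{w^*}$. Countable additivity of $\nu_{w^*}$ gives $\int_A f^p \diff\nu_{w^*} \le \sum_n a_n$. Since $p \ge 1$, the map $t \mapsto t^{1/p}$ is subadditive on $[0,\infty]$, so $(\sum_n a_n)^{1/p} \le \sum_n a_n^{1/p} \le \sum_n \widehat{\mu}_p(f,A_n)$. Taking the supremum over $w^*$ proves $\sigma$-subadditivity; monotonicity in $A$ and $\widehat{\mu}_p(f,\varnothing)=0$ are immediate. For $p=\infty$, if $f \le C_n$ a.e. on $A_n$, then $f \le \sup_n C_n \le \sum_n C_n$ a.e. on $A$, using countable subadditivity of $\widehat{\mu}$ for the exceptional sets.

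Properties (b), (d) and (e) are the corresponding facts for the $L_p(\nu_{w^*})$ norms (monotonicity, homogeneity, Minkowski), followed by taking suprema; in (e) one uses $\sup(x+y) \le \sup x + \sup y$. Note that (b) only needs the hypothesis $\mu$-a.e., since $\widehat{\mu}$-null sets are $\nu_{w^*}$-null. For (f), bound $f$ on $A$ above by $\sup_A f$ and below by $\inf_A f$, pull the constant out of the integral, and use
$$\sup_{w^*} \nu_{w^*}(A) = \widehat{\mu}(A),$$
which is Theorem \ref{5} with $\mathcal{N}=\W^*_1$. For $p=\infty$ we read $\widehat{\mu}(A)^{1/p}$ as $1$ when $\widehat{\mu}(A)>0$; the lower bound then holds because a constant $C < \inf_A f$ cannot dominate $f$ on a set of positive semivariation.

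The main obstacle is the direction "$\Rightarrow$" of (c); the converse follows at once from (b). Suppose $\widehat{\mu}_p(f,A)=0$ and set $N := A \cap [f>0]$, which can be taken in $\A$ up to a $\widehat{\mu}$-null modification. Then $\int_N f^p \diff\nu_{w^*} = 0$ for every $w^*$, so $\nu_{w^*}(N)=0$ for every $w^*$. Applying Theorem \ref{5} again gives $\widehat{\mu}(N) = \sup_{w^*} \nu_{w^*}(N) = 0$. The care needed is in the measurability bookkeeping, namely replacing $N$ by a set in $\A$ and showing that the completions agree for $\widehat{\mu}$ and for each $\nu_{w^*}$. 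For $p=\infty$, (c) is immediate from the definition of the essential supremum together with countable subadditivity of $\widehat{\mu}$, applied to the sets $[f > 1/n]$.
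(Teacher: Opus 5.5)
Your proof is correct. The paper gives no argument for this theorem; it quotes it from Dobrakov (his Theorems 1 and 3), so your route is necessarily different.

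You push everything through the representation of Theorem \ref{21}(c) together with Theorem \ref{5}. Once $\widehat{\mu}_p(f,A)$ is a supremum of ordinary $L_p(\nu_{w^*})$-norms of $f\chi_A$, and $\sup_{\|w^*\|\le 1}\nu_{w^*}(A)=\widehat{\mu}(A)$, each of (a) through (f) becomes a scalar fact passed through a supremum. The scalar facts used are monotone convergence, Minkowski, subadditivity of $t\mapsto t^{1/p}$, and the fact that $\widehat{\mu}$-null sets are $\nu_{w^*}$-null.

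What this buys is brevity, and in particular the triangle inequality (e) comes for free. Working directly from the definition, as a supremum over simple $s$ with $|s|\le f^p$, properties (a), (b), (c), (d), (f) are easy. For instance, $\sigma$-subadditivity follows from countable additivity of each $\int_{(\cdot)}s\diff\mu$, and (f) from Theorems \ref{17}(c) and \ref{16}(b). Property (e), however, needs real work there, because a simple $s$ with $|s|\le f+g$ does not split into simple pieces dominated by $f$ and $g$.

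The price is that your argument is only as self-contained as Theorem \ref{21}(c), the deepest of the imported results. Relative to this paper, which states Theorem \ref{21} before Theorem \ref{22} as a known fact, this is legitimate. In Dobrakov's own numbering, though, the representations (his Theorems 2 and 4) are interleaved with and partly after the properties you derive. As a foundation for the theory, one would have to check that their proofs do not use those properties.

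Two small points to fix.

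First, $w^*\mu$ is not a scalar measure. For $w^*\in\W^*$ it takes values in $L(\V,\F)=\V^*$, which is why Proposition \ref{4} and Theorem \ref{5} speak of its variation. Your conclusion that $\nu_{w^*}$ is a positive countably additive measure survives, but the correct reason is that the variation of a norm-countably-additive vector measure is countably additive. Finiteness then comes from $\nu_{w^*}\le\widehat{\mu}$.

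Second, in (f) for $p=\infty$ you only treat $\widehat{\mu}(A)>0$. When $\widehat{\mu}(A)=0$ the essential supremum over $A$ is $0$, so the factor $\widehat{\mu}(A)^{1/\infty}$ must be read as $0$ there, not as $0^0=1$. Otherwise the lower bound fails whenever $\inf_A f>0$.
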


\begin{theorem}[H\"older inequality] \label{23}
	Suppose $ f,g \in \SM(\mu, [0,\infty])$, \: $1 \leq p < \infty$, \: $p^{-1} + q^{-1} = 1$. \\
	Then: \; $\widehat{\mu}_1(fg, \cdot)    \leq      \widehat{\mu}_p(f, \cdot) \: \widehat{\mu}_q(g, \cdot)$
\end{theorem}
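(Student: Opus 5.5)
The plan is to reduce everything to the scalar Hölder inequality through the representation of Theorem \ref{21}(c). For every $w^* \in \W^*$ with $\|w^*\| \leq 1$, the variation $\overline{w^*\mu}$ is a nonnegative, countably additive, finite measure on $\A$. By Proposition \ref{4} it is finite, since $\var(w^*\mu,\cdot) = \svar_{\V,\F}(w^*\mu,\cdot) \leq \|w^*\|\,\widehat{\mu} < \infty$. So the ordinary Lebesgue theory is available, and Theorem \ref{21}(c) gives, for $A \in \A$,
$$\widehat{\mu}_1(fg, A) = \sup_{\|w^*\|\leq 1} \int_A fg \diff\big[\overline{w^*\mu}\big].$$
The functions $f, g, fg$ lie in $\SM(\mu,[0,\infty])$, hence are $\overline{w^*\mu}$-measurable. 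This holds because $\overline{w^*\mu} \leq \widehat{\mu}$, so $\widehat{\mu}$-null sets are $\overline{w^*\mu}$-null. The convention $0\cdot\infty = 0$ is used for $fg$.

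\emph{Case $1<p<\infty$.} Fix $w^*$ and apply the classical Hölder inequality for the positive measure $\overline{w^*\mu}$ restricted to $A$:
$$\int_A fg \diff\big[\overline{w^*\mu}\big] \leq \Big(\int_A f^p \diff\big[\overline{w^*\mu}\big]\Big)^{1/p}\Big(\int_A g^q \diff\big[\overline{w^*\mu}\big]\Big)^{1/q}.$$
By Theorem \ref{21}(c) again, the two factors are bounded by $\widehat{\mu}_p(f,A)$ and $\widehat{\mu}_q(g,A)$ respectively. Taking the supremum over $w^*$ gives the claim.

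\emph{Case $p=1$, $q=\infty$.} Here $\widehat{\mu}_\infty(g,A)$ is the essential supremum with respect to $\widehat{\mu}$. If $C > \widehat{\mu}_\infty(g,A)$, then $g \leq C$ off a $\widehat{\mu}$-null subset of $A$, and that set is also $\overline{w^*\mu}$-null. Hence $\int_A fg \diff\big[\overline{w^*\mu}\big] \leq C \int_A f \diff\big[\overline{w^*\mu}\big] \leq C\,\widehat{\mu}_1(f,A)$. Letting $C$ decrease to the infimum and taking the supremum over $w^*$ completes this case. Alternatively, one could argue with Theorem \ref{22}(b),(d) using $fg \leq C f$ a.e.

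The main obstacle is the degenerate products $0\cdot\infty$ on the right-hand side, together with infinite values of $f$ and $g$. If one factor on the right is $0$, then by Theorem \ref{22}(c) that function vanishes $\mu$-a.e. on $A$. Then $fg = 0$ a.e. on $A$, and the left-hand side is $0$, again by Theorem \ref{22}(c). If neither factor is $0$ but one is $\infty$, the inequality is trivial. Otherwise both factors are finite and positive, and the argument above applies; it also covers the case where $f$ or $g$ takes the value $\infty$ on null sets. The only remaining step is to check that Theorem \ref{21}(c) indeed involves the same supremum over the unit ball of $\W^*$ for all exponents, which is exactly how it is stated.
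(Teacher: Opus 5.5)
Your proposal is correct and takes the same approach as the paper: both use Theorem \ref{21}(c) to reduce to the classical H\"older inequality for each variation $\overline{w^*\mu}$, and handle $p=1$, $q=\infty$ by comparing the essential supremum with respect to $\overline{w^*\mu}$ with the one with respect to $\widehat{\mu}$. Your explicit treatment of the degenerate $0\cdot\infty$ cases and of the finiteness of $\overline{w^*\mu}$ is a harmless refinement that the paper leaves implicit.
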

\begin{proof}
	If $1<p<\infty$, using Theorem \ref{21} and classical H\"older inequality, we obtain:
	\begin{align*}
	\widehat{\mu}_1(fg, A) & = \sup \left\{   \int_{A} fg \diff\big[\overline{w^*\mu}\big] \::\: w^* \in \W^*, \: \|w^*\| \leq 1 \right\} \\
	& \leq \sup \left\{   \left(\int_{A} f^p \diff\big[\overline{w^*\mu}\big]\right)^{\frac{1}{p}}    \left(\int_{A} g^q \diff\big[\overline{w^*\mu}\big]\right)^{\frac{1}{q}}       \::\:      w^* \in \W^*, \: \|w^*\| \leq 1 \right\} \\
	& \leq \widehat{\mu}_p(f, A) \cdot \widehat{\mu}_q(g, A)
	\end{align*}
	Bearing in mind that $\esssup\big[g, A, \overline{w^*\mu}\big] \leq \esssup\big[g, A, \widehat{\mu}\big]$, the case $p=1, q=\infty$ is proved similarly:
	\begin{align*}
	\widehat{\mu}_1(fg, A) & = \sup \left\{   \int_{A} fg \diff\big[\overline{w^*\mu}\big] \::\: w^* \in \W^*, \: \|w^*\| \leq 1 \right\} \\
	& \leq \sup \left\{  \esssup\big[g, A, \overline{w^*\mu}\big]   \cdot    \int_{A} f \diff\big[\overline{w^*\mu}\big]        \::\:      w^* \in \W^*, \: \|w^*\| \leq 1 \right\} \\
	& \leq \widehat{\mu}_1(f, A) \cdot \widehat{\mu}_\infty(g, A)
	\end{align*}

\end{proof}

Immediately from Theorem \ref{21} and H\"older inequaility \ref{23} we get the following upper bound:
$$\left\| \int_A f \diff{\mu} \right\|_{\W} \:\leq\: \widehat{\mu}_p(f,A) \cdot \widehat{\mu}(A)^{1/q}    \qquad     f \in \mathcal{I}_D(\mu, \V), \; A \in \A, \; 1 \leq p \leq \infty$$

\begin{lemma}[Fatou's Lemma, \autocite{Dobrakov_5}, Lemma 1] \label{24}
	Suppose $\{f_n\}_1^\infty$ is a sequence in $\SM(\mu, [0,\infty])$. Then:
	$$ \forall A \in \A: \quad \widehat{\mu}_1\big(\liminf_{n\to\infty} f_n , A\big) \:\leq\: \liminf_{n\to\infty} \widehat{\mu}_1\big(f_n,A\big)$$
\end{lemma}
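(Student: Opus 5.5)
The plan is to reduce the statement to the classical Fatou lemma for the positive scalar measures $\overline{w^*\mu}$, using the representation of Theorem \ref{21}(c) with $p=1$:
$$\widehat{\mu}_1(f, A) = \sup \left\{ \int_{A} f \diff\big[\overline{w^*\mu}\big] \::\: w^* \in \W^*, \: \|w^*\| \leq 1 \right\}.$$
This holds for every $f \in \SM(\mu,[0,\infty])$.

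The first step is to check that $g := \liminf_{n} f_n$ is again in $\SM(\mu, [0,\infty])$, so that its semivariation is defined. Each $f_n$ is $\overline{\A}_\mu$-measurable with values in $[0,\infty]$. Then $g = \sup_k \inf_{n\ge k} f_n$ is measurable with respect to the completed $\sigma$-algebra, since countable infima and suprema of extended real measurable functions are measurable. For $\overline{\mathbb{R}}$-valued functions separability of the range is automatic, so the $\overline{\mathbb{R}}$-analog of the characterization in the theorem of \autocite{Dobrakov_Panchapagesan} (Theorem 2) applies. Alternatively, one can note that $g$ is a pointwise limit of the measurable functions $\inf_{k\le n\le m} f_n$ and use closedness of $\SM$ under a.e. limits.

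The second step fixes $w^*$ with $\|w^*\|\le 1$. The variation $\overline{w^*\mu}$ is a finite positive countably additive measure on $\A$, because $w^*\mu$ is a scalar countably additive measure of finite variation, bounded by $\widehat{\mu}(X)$ via Theorem \ref{5}. Any $\mu$-null set is $\overline{w^*\mu}$-null, so $\mu$-measurable functions are $\overline{w^*\mu}$-measurable. The classical Fatou lemma then gives
$$\int_A g \diff\big[\overline{w^*\mu}\big] \le \liminf_{n\to\infty} \int_A f_n \diff\big[\overline{w^*\mu}\big] \le \liminf_{n\to\infty} \widehat{\mu}_1(f_n, A),$$
where the last inequality bounds each term by the supremum in Theorem \ref{21}(c). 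Taking the supremum over $w^*$ on the left yields $\widehat{\mu}_1(g,A) \le \liminf_n \widehat{\mu}_1(f_n,A)$.

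The only point requiring care is the measurability and null-set bookkeeping in the first two steps. One must make sure that the completion with respect to $\widehat{\mu}$ is contained in the completion with respect to each $\overline{w^*\mu}$, so that the Lebesgue integrals in Theorem \ref{21}(c) make sense for $g$. This follows from $\overline{w^*\mu} \le \widehat{\mu}$, which is Theorem \ref{5}. Everything else is routine.
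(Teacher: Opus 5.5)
Your argument is correct. The paper gives no proof of this lemma (it is imported from Dobrakov), but your route is exactly the reduction the paper itself uses for H\"older's inequality (Theorem \ref{23}): write $\widehat{\mu}_1(\cdot,A)$ via Theorem \ref{21}(c) as a supremum of Lebesgue integrals against $\overline{w^*\mu}$, apply the classical Fatou inequality for each fixed $w^*$, then take the supremum.

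Two small corrections. First, $w^*\mu$ is not scalar-valued; it takes values in $\V^* = L(\V,\F)$. This does no harm, because all you use is that $\overline{w^*\mu}$ is a finite positive countably additive measure with $\overline{w^*\mu}\le\widehat{\mu}$, and that holds by Theorem \ref{5} (cf.\ Proposition \ref{4}). Second, your alternative measurability remark is imprecise: $\liminf_n f_n$ is an iterated limit of the finite infima, not a single pointwise limit. Your primary argument via measurability with respect to the completed $\sigma$-algebra already covers this.
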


\begin{theorem}[\autocite{Dobrakov_2}, Theorem 5] \label{25}
	For $f \in \mathcal{I}_D(\mu, \V)$ continuity of $\widehat{\mu}_1(f, \cdot)$ is equivalent to the existence of the following control measure:
	$$\lambda \in \CA\big[ \A, [0,\infty) \big] :    \quad         \lambda   \;\leq\;   \svar_{\F, \W}\left( \int_{(\cdot)} f \diff{\mu}  \:,\: \cdot \right)   \;\leq\;   \widehat{\mu}_1(f, \cdot)     \;<<\;    \lambda$$
	Moreover, continuity of $\widehat{\mu}_1(f, \cdot)$ implies $\widehat{\mu}_1(f, X) < \infty$.
\end{theorem}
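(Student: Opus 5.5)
Throughout, let $\nu := \int_{(\cdot)} f \diff{\mu}$. By Theorem \ref{17}, $\nu \in \CA[\A,\W]$. I would prove the two inequalities in the chain first, then the two implications, then finiteness.

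\textbf{The middle inequality.} Apply Theorem \ref{21}(b) with $p=1$ and the admissible choice $g=f$. This gives $\svar_{\F,\W}(\nu,A) \leq \widehat{\mu}_1(f,A)$ for every $A\in\A$, independently of any continuity hypothesis.

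\textbf{The implication ``control measure $\Rightarrow$ continuity''.} Suppose a finite countably additive $\lambda \geq 0$ exists with $\widehat{\mu}_1(f,\cdot) << \lambda$. Take $A_n \searrow \varnothing$. Countable additivity and finiteness give $\lambda(A_n)\to 0$, and the $\epsilon$--$\delta$ form of $<<$ then gives $\widehat{\mu}_1(f,A_n)\to 0$. So $\widehat{\mu}_1(f,\cdot)$ is continuous.

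\textbf{Finiteness of $\widehat{\mu}_1(f,X)$ under continuity.} I would follow the argument behind the remark after Theorem \ref{5} on continuous semivariations. By Theorem \ref{22}(a), $\widehat{\mu}_1(f,\cdot)$ is a $\sigma$-subadditive submeasure. Assume $\widehat{\mu}_1(f,X)=\infty$. Using Theorem \ref{21}(c), write $\widehat{\mu}_1(f,A)$ as a supremum of $\int_A |f| \diff\overline{w^*\mu}$. Then inductively choose $w_n^*$ and sets $B_n$ so that either $\widehat{\mu}_1(f,X\setminus B_1\cup\dots\cup B_n)$ stays infinite while each $B_n$ carries mass at least $1$, or a decreasing sequence with infinite value arises. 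In either case one obtains a disjoint sequence $(B_n)$ with $\widehat{\mu}_1(f,B_n)\geq 1$. The sets $\bigcup_{k\geq n}B_k$ decrease to $\varnothing$ but have value at least $1$, contradicting continuity.

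\textbf{The implication ``continuity $\Rightarrow$ control measure''.} This is where the work lies.
\begin{enumerate}
\item By Theorems \ref{21}(c) and \ref{5}, $\widehat{\mu}_1(f,\cdot)$ is the supremum of a family $\mathcal{L}$ of positive scalar measures, each dominated by $\widehat{\mu}_1(f,\cdot)$. The candidates are $|f|\diff\overline{w^*\mu}$, or $\overline{w^*\nu_g}$ with $\nu_g=\int g\diff\mu$ and $|g|\leq|f|$.
\item Continuity of the dominating submeasure makes $\mathcal{L}$ uniformly countably additive. By the finiteness step, $\mathcal{L}$ is also bounded.
\item By the Bartle--Dunford--Schwartz construction, a bounded uniformly countably additive family of positive measures has a control measure of the form $\lambda=\sum_n 2^{-n}\lambda_n$ with $\lambda_n\in\mathcal{L}$. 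It satisfies $\lambda\leq \sup\mathcal{L}$, and $\lambda(A)=0$ forces every $\lambda'(A)=0$, hence $\widehat{\mu}_1(f,A)=0$.
\item The Theorem \ref{7}-type $\epsilon$--$\delta$ upgrade, proved for a continuous submeasure by a standard exhaustion argument, then gives $\widehat{\mu}_1(f,\cdot) << \lambda$.
\end{enumerate}

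\textbf{Expected obstacle.} The hard point is the sharper lower bound $\lambda\leq\svar_{\F,\W}(\nu,\cdot)$, with $\nu$ built from $f$ itself rather than from an arbitrary $g$ with $|g|\leq|f|$. The route I would try first is Rybakov's theorem applied to $\nu$: choose $w^*$ with $\|w^*\|\leq 1$ and take $\lambda=\overline{w^*\nu}\leq\svar_{\F,\W}(\nu,\cdot)$. One must then show that $\lambda$-null sets are $\widehat{\mu}_1(f,\cdot)$-null, i.e. that $\nu$ vanishing on all subsets of $A$ forces $f=0$ $\mu$-a.e.\ on $A$ (Theorem \ref{22}(c)).

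This step may fail because of cancellation between $f$ and $\mu$. If it does, I would fall back to the family construction above and check whether the intended statement refers to the supremum over $g$ in Theorem \ref{21}(b).
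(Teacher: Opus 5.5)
\textbf{The flagged step is false, not just hard.} Your suspicion about the ``expected obstacle'' is correct. The lower bound $\lambda\le\svar_{\F,\W}(\nu,\cdot)$ combined with $\widehat{\mu}_1(f,\cdot)<<\lambda$ does not hold, so the forward implication of the statement as quoted cannot be proved by any route. (The paper gives no argument of its own, only the citation to Dobrakov.)

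Here is a counterexample. Take $\V=\W=\F^2$ with the Euclidean norm, $P(a,b):=(a,0)$, a point $x_0\in X$, $\mu(A):=\chi_A(x_0)\,P$ and $f:\equiv e_2=(0,1)$.
\begin{itemize}
\item $\mu$ is countably additive with $\widehat{\mu}(A)=\chi_A(x_0)$.
\item $f$ is simple, hence $f\in\mathcal{I}_D(\mu,\V)$.
\item $\nu(A)=\chi_A(x_0)\,Pe_2=0$ for every $A$, so $\svar_{\F,\W}(\nu,\cdot)\equiv 0$.
\item Since $|f|\equiv 1$, we get $\widehat{\mu}_1(f,A)=\widehat{\mu}(A)=\chi_A(x_0)$ (test with $s\equiv e_1$), which is continuous.
\end{itemize}
Every admissible $\lambda$ is therefore $0$, and $\widehat{\mu}_1(f,\cdot)<<0$ would force $\widehat{\mu}_1(f,\cdot)\equiv 0$, which is false.

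This is exactly the cancellation you feared: $f$ takes values in $\ker P$. So the implication your Rybakov route needs ($\nu$ vanishing on all subsets of $A$ forces $f=0$ a.e.\ on $A$) fails. Replacing $\delta_{x_0}$ by a Gaussian measure on $\mathbb{R}^d$ gives a regular, fully supported example with continuous semivariation. That example also refutes Proposition~\ref{27}, which the paper derives from this theorem, so invoking it would have been circular anyway.

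What your argument does prove is the corrected statement. The middle inequality and the reverse implication are fine. Your steps 1--4 are uniform countable additivity of the dominated family, the Bartle--Dunford--Schwartz measure $\lambda=\sum_n 2^{-n}\lambda_n\le\widehat{\mu}_1(f,\cdot)$, and the exhaustion upgrade to the $\varepsilon$--$\delta$ form. Together they yield $\lambda\le\widehat{\mu}_1(f,\cdot)=\sup_{|g|\le|f|}\svar_{\F,\W}\big(\int_{(\cdot)}g\diff\mu,\cdot\big)<<\lambda$ by Theorem~\ref{21}(b). This is the fallback version you anticipated.

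\textbf{The finiteness step also needs repair.} Your sketched dichotomy does not in general produce disjoint sets of mass at least $1$, because the decreasing sets of infinite value need not shrink to $\varnothing$. For instance, $\sup_n n\,\delta_{x_0}$ is $\widehat{\mu}_1(g,\cdot)$ for the $[0,\infty]$-valued $g=\infty\cdot\chi_{\{x_0\}}$ and the $\mu$ above. It is a continuous $\sigma$-subadditive supremum of measures, yet it is infinite on $X$.

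What must be used is that $f$ is $\V$-valued. After modifying $f$ on a null set so that it is $\A$-measurable, $E_m:=[|f|>m]\searrow\varnothing$. Continuity then gives some $m$ with $\widehat{\mu}_1(f,E_m)<1$. Theorem~\ref{22}(a),(f), together with the standing assumption $\widehat{\mu}<\infty$, give $\widehat{\mu}_1(f,X)\le 1+m\,\widehat{\mu}(X)<\infty$.
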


According to \autocite[Theorem 7]{Dobrakov_2}, the space $\SM(\mu, \V)$ with the $L_p$-seminorm forms a complete extended pseudometric space.
Identifying functions equal $\mu$-a.e. we obtain a complete extended metric space.
Naturally, the next step is to restrict one's attention to those strongly measurable maps that have a finite seminorm.
In integration schemes with absolute integrability (such as Bochner and Dinculeanu integrals) this condition implies integrability and determines the $L_p$ spaces.
In Dobrakov's theory the situation is more complex.
In general there may be integrable functions with infinite $L_1$-seminorm, as well as strongly measurable functions with finite $L_1$-seminorm, that are not integrable \autocite[Example 1]{Panchapagesan_ODF}.
Consequently, there arise naturally not one but four spaces of functions with finite seminorm, each of which in a certain way corresponds to the classical Lebesgue space.
The reader is referred to \autocite{Dobrakov_2} and \autocite{Panchapagesan_ODF} for details.
In this paper we will be concerned only with the smallest and most well-behaved of these spaces, the definition of which is formulated below.

\begin{definition}[Dobrakov $L_p$ space, \autocite{Dobrakov_2}, Definition 4]
	Let $Y \in \big\{\V, \: [0,\infty]\big\}$.
	The Dobrakov $L_p$ space for $1\leq p< \infty$ is defined as follows:
	$$\mathcal{L}_{p}(\mu, Y) \::=\: \big\{f \in \SM(\mu, Y) \::\: \widehat{\mu}_p(f, \cdot) \text{ is continuous on  } \A \big\}$$
	The corresponding quotient space after identifying functions equal $\mu$-a.e. is denoted by $L_{p}(\mu, Y)$.
\end{definition}

According to \autocite[Theorem 7]{Dobrakov_2}, $\mathcal{L}_{p}(\mu, \V)$ is a closed subspace of $\SM(\mu, \V)$ in the $L_p$-seminrom.
Therefore the quotient $\big( L_{p}(\mu, \V), \|\cdot\|_p \big)$ is a Banach space.
Moreover, for these spaces there are complete analogs of Vitali and Lebesgue convergence theorems \autocite{Dobrakov_2} and the Monotone Convergence Theorem \autocite[Theorem 2]{Dobrakov_5}, and only these spaces can be separable \autocite[Theorem 19]{Dobrakov_2}.
It should be noted that even when the measure has finite variation the $\mathcal{L}_1$ class of Dobrakov already contains the space of Dinculeanu integrable functions.
In some cases the inclusion may be strict \autocite{Panchapagesan_ODF}.

\begin{proposition} \label{26}
	$1 \leq p \leq r < \infty \quad\implies\quad \mathcal{L}_{r}(\mu, \V) \subseteq \mathcal{L}_{p}(\mu, \V)$.
\end{proposition}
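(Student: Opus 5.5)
We must show that if $\widehat{\mu}_r(f,\cdot)$ is continuous on $\A$, then so is $\widehat{\mu}_p(f,\cdot)$. The natural tool is the H\"older inequality (Theorem \ref{23}) applied with exponent $s = r/p \geq 1$, which controls $\widehat{\mu}_p(f,A)$ by $\widehat{\mu}_r(f,A)$ and a power of $\widehat{\mu}(A)$. The argument works at the level of $f^p$.

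\textbf{Step 1: rewrite in terms of $f^p$.} By definition (or Theorem \ref{21}c), $\widehat{\mu}_p(f,A)^p = \widehat{\mu}_1(|f|^p, A)$. In the same way, $\widehat{\mu}_r(f,A)^r = \widehat{\mu}_1(|f|^r,A) = \widehat{\mu}_s(|f|^p, A)^s$ with $s = r/p$.

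\textbf{Step 2: apply H\"older.} Let $q$ be the conjugate exponent of $s$. If $s=1$ there is nothing to prove. Otherwise, Theorem \ref{23} with the functions $|f|^p$ and $1$ gives
$$\widehat{\mu}_p(f,A)^p = \widehat{\mu}_1(|f|^p\cdot 1, A) \leq \widehat{\mu}_s(|f|^p, A)\, \widehat{\mu}_q(1,A) = \widehat{\mu}_r(f,A)^{p}\, \widehat{\mu}(A)^{1/q}.$$
Here $\widehat{\mu}_q(1,A)=\widehat{\mu}(A)^{1/q}$ follows from Theorem \ref{22}f, or from Theorem \ref{21}c with the function $1$.

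\textbf{Step 3: conclude continuity.} Take a sequence $A_n \searrow \varnothing$. By the standing assumption of Subsection \ref{12}, $\widehat{\mu}(X) < \infty$, so $\widehat{\mu}(A_n) \leq \widehat{\mu}(X)$. Continuity of $\widehat{\mu}_r(f,\cdot)$ gives $\widehat{\mu}_r(f,A_n)\to 0$; note also that Theorem \ref{25} applied to $|f|^r$ (or continuity directly) makes these values finite for large $n$. The bound from Step 2 then gives $\widehat{\mu}_p(f,A_n)\to 0$, which is the required continuity. Measurability is automatic, since $f \in \SM(\mu,\V)$.

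\textbf{Main obstacle.} The delicate point is that $\widehat{\mu}$ need not be continuous, only finite. The argument above therefore uses only the boundedness of $\widehat{\mu}(A_n)$, and the continuity comes from the $\widehat{\mu}_r$ factor. One should also check that the identity $\widehat{\mu}_1(|f|^p,\cdot)=\widehat{\mu}_p(f,\cdot)^p$ holds exactly, including when the values are infinite. This follows directly from the definition, since the constraint $\|s\|\leq (|f|^p)^1$ is the same constraint in both expressions.
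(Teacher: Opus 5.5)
Your proof is correct and follows the paper's argument: H\"older's inequality with the conjugate exponents $r/p$ and $r/(r-p)$ is applied to $|f|^p$ and $1$, and continuity then comes from the $\widehat{\mu}_r(f,\cdot)$ factor together with finiteness of $\widehat{\mu}(X)$. Your exponent bookkeeping is slightly more careful than the paper's, whose final bound $\widehat{\mu}(X)^{(r-p)/r}$ should read $\widehat{\mu}(X)^{(r-p)/(rp)}$; this does not affect the continuity conclusion.
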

\begin{proof}
	Fix $f \in \SM(\mu,\V)$.
	If $p<r$, take the conjugate exponents $r/p$ and $r/(r-p)$ and apply \ref{23}:
	$$\widehat{\mu}_p(f,\cdot)    \;=\;    \widehat{\mu}_1(|f|^p, \cdot)^{1/p}       \;\leq\;        \Big(  \widehat{\mu}_{r/p}(|f|^p, \cdot) \; \widehat{\mu}_{r/(r-p)}(1, \cdot) \Big)^{1/p}       \;\leq\;     \widehat{\mu}_{r}(f, \cdot) \; \widehat{\mu}(X)^{(r-p)/r} $$
	Now continuity of $\widehat{\mu}_r(f,\cdot)$ implies continuity of $\widehat{\mu}_p(f,\cdot)$.
\end{proof}

The next proposition asserts that the integral measure defined by a nonzero $\mathcal{L}_1$-density cannot vanish on every set.
If the density is merely from $\mathcal{I}_D(\mu,\V)$, this may not be the case in general.

\begin{proposition} \label{27}
	For $f \in \mathcal{L}_1(\mu, \V)$ we have \:$\mu f = 0    \iff    f=0 \;\:\widehat{\mu}\text{ - a.e.}$
\end{proposition}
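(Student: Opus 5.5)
The forward implication is the substantive one. My plan is to derive it from the control measure of Theorem \ref{25} together with the null-set characterization in Theorem \ref{22}(c). Throughout, $\mu f$ denotes the integral measure $A \mapsto \int_A f \diff{\mu}$, which by Theorem \ref{17}(b) belongs to $\CA[\A,\W]$.

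For the easy direction, assume $f = 0$ $\widehat{\mu}$-a.e. Then the constant sequence $s_n \equiv 0 \in \Sim(\A,\V)$ converges $\mu$-a.e. to $f$. Its integrals are identically zero, so they trivially converge setwise on $\A$. Hence $\{s_n\} \in \mathrm{AS}(f,\mu)$, and by Definition \ref{15} and Lemma \ref{13}-\ref{14} we get $\int_A f \diff{\mu} = 0$ for every $A \in \A$, i.e. $\mu f = 0$.

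For the converse, assume $\mu f = 0$.
\begin{enumerate}
\item Since $f \in \mathcal{L}_1(\mu,\V)$, the submeasure $\widehat{\mu}_1(f,\cdot)$ is continuous. Here $\mathcal{L}_1(\mu,\V) \subseteq \mathcal{I}_D(\mu,\V)$ is implicit, since $\mu f$ must make sense. Theorem \ref{25} then supplies $\lambda \in \CA[\A,[0,\infty)]$ with
$$\lambda \;\leq\; \svar_{\F,\W}\big(\mu f, \cdot\big) \;\leq\; \widehat{\mu}_1(f,\cdot) \;<<\; \lambda .$$
\item Since $\mu f \equiv 0$, every sum in the definition of $\svar_{\F,\W}(\mu f, A)$ vanishes, so $\svar_{\F,\W}(\mu f,\cdot) \equiv 0$. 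Therefore $\lambda \equiv 0$.
\item Absolute continuity $\widehat{\mu}_1(f,\cdot) << \lambda$ with $\lambda \equiv 0$ forces $\widehat{\mu}_1(f,A) < \varepsilon$ for every $\varepsilon > 0$ and every $A$. Hence $\widehat{\mu}_1(f,X) = 0$.
\item Theorem \ref{22}(c) applied to $|f|$ on $A = X$ then gives $f = 0$ $\mu$-a.e.
\end{enumerate}

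The main obstacle is the first inequality $\lambda \leq \svar_{\F,\W}(\mu f,\cdot)$ in Theorem \ref{25}. It is exactly what ties the vanishing of the vector integral to the size of $f$, and the whole argument rests on it. When $\V$ is genuinely vector-valued I am uneasy about it. Take $\V = \mathbb{R}^2$, $\W = \mathbb{R}$, $\mu(A) = \mathrm{Leb}(A)\,\pi_1$ on $[0,1]$ with $\pi_1$ the first-coordinate projection, and $f \equiv (0,1)$. This $f$ lies in $\mathcal{L}_1$, satisfies $\mu f = 0$, and yet $\widehat{\mu}_1(f,\cdot) = \mathrm{Leb} \neq 0$. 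So the converse would fail here, and Theorem \ref{25} as quoted would fail with it. In writing the proof I would therefore check the precise hypotheses of \autocite[Theorem 5]{Dobrakov_2}. If necessary I would restrict the statement to the case where the relevant directions are controlled, for instance $\V = \F$, where Theorem \ref{21}(a) can be used directly by writing any $g$ with $|g| \leq |f|$ as $g = h f$ with $|h| \leq 1$.
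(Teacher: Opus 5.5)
Your forward implication is exactly the paper's proof: the control measure from Theorem \ref{25}, then $\mu f=0$ forcing $\lambda=0$, then absolute continuity giving $\widehat{\mu}_1(f,X)=0$, then Theorem \ref{22}(c). Your trivial direction is fine too. Your misgiving is justified, however, and your counterexample is correct. For $\mu(A)=\mathrm{Leb}(A)\,\pi_1$ one has $\widehat{\mu}=\mathrm{Leb}$, which is finite and continuous. The function $f\equiv(0,1)$ is simple with $\widehat{\mu}_1(f,\cdot)=\widehat{\mu}_1(1,\cdot)=\widehat{\mu}$, so $f\in\mathcal{L}_1(\mu,\V)$. Yet $\int_A f\diff{\mu}=\mathrm{Leb}(A)\,\pi_1(0,1)=0$ for every $A$.

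So the proposition, as stated for arbitrary $\V$, is false, and the same construction works for any $\V$ with $\dim\V\geq2$. The paper's argument breaks precisely at the inequality $\lambda\leq\svar_{\F,\W}(\mu f,\cdot)$ in Theorem \ref{25} as quoted. In your example $\widehat{\mu}_1(f,\cdot)$ is continuous, yet no $\lambda\leq0$ satisfies $\mathrm{Leb}<<\lambda$.

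The underlying reason is this. $\svar_{\F,\W}(\mu f,\cdot)$ is the supremum over $w^*$ of the variation of the scalar measure $w^*(\mu f)=\int_{(\cdot)}f\diff[w^*\mu]$. When $w^*\mu$ is $\V^*$-valued, this variation can be much smaller than $\int|f|\diff\big[\overline{w^*\mu}\big]$; here it is $0$. That is why Theorem \ref{21}(b) needs a supremum over all $g$ with $|g|\leq|f|$, not just $g=f$. Rechecking the hypotheses of Dobrakov's theorem will not help, since your $f$ is constant and your $\mu$ even has finite variation. What survives in general is a control measure with $\lambda\leq\widehat{\mu}_1(f,\cdot)<<\lambda$, and that carries no information about $\mu f$.

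Your restriction to $\V=\F$ is the right repair. There $w^*\mu$ is scalar, so by Theorem \ref{18} the measure $w^*(\mu f)$ has variation $\int|f|\diff\big[\overline{w^*\mu}\big]$. Theorems \ref{5} and \ref{21}(c) then give $\svar_{\F,\W}(\mu f,\cdot)=\widehat{\mu}_1(f,\cdot)$, and the paper's argument goes through verbatim. Your factorisation $g=hf$ with $|h|\leq1$ also works, via $\int_A hf\diff{\mu}=\int_A h\diff[\mu f]$, which is immediate for simple $h$ and extends by Theorem \ref{28}.

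Be aware that the defect propagates. Replace $\mathrm{Leb}$ by the standard Gaussian measure on $\mathbb{R}^d$; your example then meets all standing assumptions of Section \ref{57}. The constant multiplier $\varphi$ equal to the projection onto the second coordinate gives $\mu\varphi=0$ with $\varphi\neq0$. So test measures there do not determine their multipliers without an additional non-degeneracy hypothesis on $\mu$.
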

\begin{proof}
	The statement follows from the existence of control measure described in Theorem \ref{25}.
	If the integral measure $\mu f$ vanishes on each set, the same must hold for the control measure, and hence for $\widehat{\mu}_1(f,\cdot)$ by absolute continuity.
	By Theorem \ref{22} this means $f=0 \;\:\widehat{\mu}\text{ - a.e.}$
\end{proof}

Finally we state here the aforementioned analog of the Lebesgue Dominated Convergence Theorem, that will be used numerous times throughout the paper.
\begin{theorem}[Lebesgue Dominated Convergence, \autocite{Dobrakov_2}, Theorem 17] \label{28}
	Suppose \: $\displaystyle \SM(\mu, \V) \:\ni\: f_n  \:\xrightarrow{\mu\text{-a.e.}}\: f \:\in\: \SM(\mu, \V)$ \;and\;  $|f_n| \leq g \in \mathcal{L}_{p}(\mu, [0,\infty])$. \\
	Then: \: $\displaystyle \mathcal{L}_{p}(\mu, \V) \:\ni\: f_n  \:\xrightarrow{L_p}\: f \:\in\: \mathcal{L}_{p}(\mu, \V)$
\end{theorem}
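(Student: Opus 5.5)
The plan is to split the statement into two parts: first that all functions involved lie in $\mathcal{L}_p(\mu,\V)$, then the $L_p$-convergence itself. For the convergence I reduce to an Egorov-type argument relative to a scalar control measure supplied by Theorem \ref{25}.

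\textbf{Step 1: membership in $\mathcal{L}_p$.} Since $|f_n| \leq g$ everywhere and $f_n \to f$ $\mu$-a.e., we also have $|f| \leq g$ $\mu$-a.e. By monotonicity (Theorem \ref{22}(b)), $\widehat{\mu}_p(f_n,\cdot) \leq \widehat{\mu}_p(g,\cdot)$ and $\widehat{\mu}_p(f,\cdot) \leq \widehat{\mu}_p(g,\cdot)$ on $\A$. These are nonnegative, and the dominating submeasure is continuous because $g \in \mathcal{L}_p$. Hence all of them are continuous, so $f_n, f \in \mathcal{L}_p(\mu,\V)$.

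\textbf{Step 2: reduction to a scalar function.} Put $h_n := |f_n - f|^p \in \SM(\mu,[0,\infty])$. Then $h_n \to 0$ $\mu$-a.e. and $h_n \leq 2^p g^p$ $\mu$-a.e. From the definitions, $\|f_n-f\|_p^p = \widehat{\mu}_1(h_n, X)$. Moreover $\widehat{\mu}_1(g^p,\cdot) = \widehat{\mu}_p(g,\cdot)^p$ is continuous.

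\textbf{Step 3: the control measure.} I would apply Theorem \ref{25} to $g^p$. Strictly, that theorem is stated for integrable $f$; I would check that its proof only uses continuity of $\widehat{\mu}_1(g^p,\cdot)$, or else replace $g^p$ by an integrable $\mathcal{L}_1$ function with the same $1$-semivariation, via Theorem \ref{21}(b). This yields $\lambda \in \CA[\A,[0,\infty)]$ with $\lambda \leq \widehat{\mu}_1(g^p,\cdot) \ll \lambda$. Since $\lambda \leq \widehat{\mu}_1(g^p,\cdot) \leq \sup g^p$-type bounds, every $\widehat{\mu}$-null set is $\lambda$-null, so $h_n \to 0$ $\lambda$-a.e. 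Each $h_n$ is measurable for the $\widehat{\mu}$-completion, hence for the $\lambda$-completion. Therefore the classical Egorov theorem applies on the finite measure space $(X,\A,\lambda)$.

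\textbf{Step 4: the Egorov estimate.} Fix $\varepsilon>0$ and choose $\delta>0$ from $\widehat{\mu}_1(g^p,\cdot) \ll \lambda$. By Egorov, find $A \in \A$ with $\lambda(A)<\delta$ such that $h_n \to 0$ uniformly on $X \setminus A$. Enlarging $A$ by a $\widehat{\mu}$-null measurable set, which costs nothing in $\lambda$, we may assume $h_n \leq 2^p g^p$ holds everywhere off $A$. Using $\sigma$-subadditivity in the set variable (Theorem \ref{22}(a)), monotonicity (Theorem \ref{22}(b)) and the bound of Theorem \ref{22}(f), we get
$$\widehat{\mu}_1(h_n,X) \leq \widehat{\mu}_1(h_n,A) + \widehat{\mu}_1(h_n,X\setminus A) \leq 2^p\,\widehat{\mu}_1(g^p,A) + \sup_{X\setminus A} h_n \cdot \widehat{\mu}(X).$$
Here the first term is below $2^p\varepsilon$, and the second tends to $0$ because $\widehat{\mu}(X)<\infty$. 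Therefore $\limsup_n \|f_n-f\|_p^p \leq 2^p \varepsilon$ for every $\varepsilon>0$, which gives the $L_p$-convergence.

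The main obstacle is Step 3: justifying the existence of a genuine countably additive control measure for $g^p$ when $g$ is only assumed to lie in $\mathcal{L}_p(\mu,[0,\infty])$ rather than being Dobrakov integrable. Everything else is routine bookkeeping with Theorem \ref{22}.
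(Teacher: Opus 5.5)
The paper does not prove this statement (it is quoted from \autocite{Dobrakov_2}), so your argument has to stand on its own. Steps 1, 2 and 4 are sound, but Step 3 is a genuine gap as written.

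Theorem \ref{25} is stated only for $f\in\mathcal{I}_D(\mu,\V)$. Its $\lambda$ is dominated by $\svar_{\F,\W}\big(\int_{(\cdot)}f\diff\mu,\cdot\big)$, which presupposes an integrable $\V$-valued density. Your $g^p$ is neither, so ``checking the proof'' is not a formality.

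Your second option does not work either. Theorem \ref{21}(b) does not give you a single integrable function with the same $1$-semivariation. The natural candidate $G=g^p v_0$ with $\|v_0\|=1$ does lie in $\mathcal{L}_1(\mu,\V)$, but its integrability is not among the results stated in the paper. Proving it directly means showing $\widehat{\mu}_1(s_n-s_m,X)\to0$ for simple $0\le s_n\nearrow g^p$, which is a special case of the very theorem you are proving.

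Also, your reason for $\widehat{\mu}$-null sets being $\lambda$-null (``$\sup g^p$-type bounds'') fails for unbounded $g$. The correct reason is Theorem \ref{22}(c), or the estimate $\|\int_N s\diff\mu\|\le\|s\|_N\,\widehat{\mu}(N)$ for simple $s$.

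The control measure is in fact unnecessary. Egorov's theorem holds for any $\sigma$-subadditive continuous submeasure, in particular for $\eta:=\widehat{\mu}_1(g^p,\cdot)$. The argument runs as follows.

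\textbf{Null set.} Take $N_0\in\A$ with $\widehat{\mu}(N_0)=0$, off which all $h_n$ are $\A$-measurable, $h_n\to0$ and $h_n\le2^pg^p$. Then $\eta(N_0)=0$.

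\textbf{Exceptional sets.} Put $E_{k,m}:=N_0\cup\bigcup_{n\ge m}\big([h_n\ge1/k]\setminus N_0\big)\in\A$. These satisfy $E_{k,m}\setminus N_0\searrow\varnothing$ as $m\to\infty$, so by continuity $\eta(E_{k,m})\le\eta(E_{k,m}\setminus N_0)+\eta(N_0)\to0$.

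\textbf{Small set.} Choose $m_k$ with $\eta(E_{k,m_k})<\varepsilon2^{-k}$ and set $A:=\bigcup_kE_{k,m_k}$. Theorem \ref{22}(a) gives $\eta(A)<\varepsilon$, while $h_n\to0$ uniformly on $X\setminus A$.

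Your Step 4 estimate then goes through verbatim with $\eta(A)<\varepsilon$ in place of $\lambda(A)<\delta$. This completes the proof without any appeal to Theorem \ref{25}.
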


\subsection{Operator-valued densities and change of measure formula} \label{29}
Let $\V'$ be another Banach space.
Consider the following linear isometry:
$$L(\V, \W) \hookrightarrow L\big[ L(\V', \V) , L(\V', \W)\big]$$
$$L(\V, \W) \:\ni\: A \:\mapsto\:  \big[ L(\V', \V) \:\ni\: B \:\mapsto\:  A\circ B    \big] \:\in\: L\big[ L(\V', \V) , L(\V', \W) \big]$$
This mapping allows to interpret our measure $\mu$ as taking values in the space $L\big[ L(\V', \V) , L(\V', \W) \big]$.
Denote this measure by $M$. Its semivariation $\widehat{M}$ is by definition the semivariation of $\mu$ relatively to the introduced isometry.
It is easily verified that $M$ is also countably additive in the uniform operator topology.
Therefore we can speak of integrating $L(\V', \V)$-valued functions with respect to $M$.
By Theorem \ref{17} such integration results in $L(\V', \W)$-valued measures, countably additive in the uniform operator topology and absolutely continuous with respect to $\widehat{M}$.
But beforehand it is necessary to make sure that $M$ has finite semivariation, which follows from a more general statement below.

\begin{theorem}[Coincidence of Dobrakov semivariations] \label{30}
	$\widehat{M}_p = \widehat{\mu}_p$ for all $1 \leq p \leq \infty$
\end{theorem}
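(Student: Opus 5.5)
The plan is to argue directly from the definitions and to compare simple integrands for $M$ and for $\mu$ via rank-one operators. Throughout we fix, by Hahn--Banach, a unit vector $v'_0 \in \V'$ and a functional $\varphi \in (\V')^*$ with $\|\varphi\| = 1$ and $\varphi(v'_0) = 1$. Any $v \in \V$ then gives the rank-one operator $v\otimes\varphi : u \mapsto \varphi(u)\, v$ in $L(\V',\V)$, with $\|v\otimes\varphi\|_{op} = \|v\|$. The order of steps is: first $p=1$ with $f=1$ (the classical semivariations), then the case $p=\infty$, then general $1\le p<\infty$.

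\emph{Step 1: $\widehat{M} = \widehat{\mu}$, indeed $\svar(M,E) = \svar(\mu,E)$ for every $E$.} Take $\{A_i\}_1^m \in \mathrm{FD}(\A)$ inside $E$ and $B_i \in L(\V',\V)$ with $\|B_i\| \le 1$. For any unit $v' \in \V'$ we have
$$\Big\|\sum_i \mu(A_i) B_i v'\Big\|_\W \le \svar(\mu,E),$$
because $\|B_i v'\| \le 1$. Taking the supremum over $v'$ bounds $\big\|\sum_i \mu(A_i)\circ B_i\big\|_{op}$, and hence gives $\svar(M,E) \le \svar(\mu,E)$. Conversely, given unit vectors $v_i \in \V$, put $B_i = v_i\otimes\varphi$. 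Then $\big(\sum_i \mu(A_i)\circ B_i\big) v'_0 = \sum_i \mu(A_i) v_i$, so $\svar(\mu,E) \le \svar(M,E)$. In particular $\widehat{M} < \infty$, so $M$ fits the standing assumptions of the Dobrakov theory. Moreover $M$ and $\mu$ have the same null sets, so $\SM(M,[0,\infty]) = \SM(\mu,[0,\infty])$, and "a.e." has the same meaning for both measures.

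\emph{Step 2: the case $p=\infty$.} This is now immediate. The essential supremum with respect to $\widehat{M}$ depends only on the null sets of $\widehat{M}$, and these are the null sets of $\widehat{\mu}$ by Step 1.

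\emph{Step 3: $1 \le p < \infty$.} We work with the defining supremum over simple functions. Let $S = \sum_i \chi_{A_i} B_i \in \Sim(\A, L(\V',\V))$ with $\|S(\cdot)\| \le f^p$. For a unit $v' \in \V'$, the function $Sv' = \sum_i \chi_{A_i} B_i v'$ lies in $\Sim(\A,\V)$ and satisfies $\|Sv'(\cdot)\| \le f^p$. Since
$$\Big(\int_A S \diff{M}\Big) v' = \int_A S v' \diff{\mu},$$
we get $\big\|\int_A S\diff{M}\big\|_{op} \le \widehat{\mu}_p(f,A)^p$, hence $\widehat{M}_p \le \widehat{\mu}_p$. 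Conversely, for $s = \sum_i \chi_{A_i} v_i \in \Sim(\A,\V)$ with $\|s(\cdot)\| \le f^p$, set $S = \sum_i \chi_{A_i}\,(v_i\otimes\varphi)$. Then $\|S(\cdot)\| = \|s(\cdot)\| \le f^p$, and evaluating $\int_A S\diff{M}$ at $v'_0$ returns $\int_A s\diff{\mu}$. This gives $\widehat{\mu}_p \le \widehat{M}_p$.

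I do not expect a serious obstacle. The points needing care are the bookkeeping ones: checking that the rank-one construction preserves simplicity and the pointwise norm bound, and invoking Step 1 so that $\widehat{M}_p$ is defined on the same class $\SM(\mu,[0,\infty])$ with the same a.e.\ notion. The $p=\infty$ case in Step 2 depends on exactly this identification of null sets.
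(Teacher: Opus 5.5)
Your proposal is correct and follows essentially the same argument as the paper. Both use the rank-one operators $v\otimes\varphi$ evaluated at $v'_0$ for $\widehat{\mu}_p \le \widehat{M}_p$, the restriction $S \mapsto Sv'$ for $\widehat{M}_p \le \widehat{\mu}_p$, and the coincidence of null sets for $p=\infty$. The only difference is order: you prove $\widehat{M}=\widehat{\mu}$ first, directly from the definition of semivariation, whereas the paper obtains it as the $p=1$, $f\equiv 1$ special case; your order has the small advantage of checking in advance that $M$ has finite semivariation and the same null sets.
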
 
\begin{proof}
	First suppose $1 \leq p < \infty$.
	Fix $f \in \SM(\A, [0,\infty])$ and $A \in \A$.\\
	Let $s = \sum_{i=1}^{n} \chi_{A_i} H_i \:\in\: \Sim\big[ \A, L(\V', \V) \big], \; \|s(\cdot)\| \leq f^p$. Then:
	\begin{align*}
		\left\| \int_{A} s \diff{M} \right\|_{op}      &=       \sup_{\|v\| \leq 1} \left\| \sum_{i=1}^n M(A \cap A_i)[H_i]v \right\|_{\W}      \:=\:       \sup_{\|v\| \leq 1} \left\| \sum_{i=1}^n \mu(A \cap A_i)[H_i v] \right\|_{\W} \\
		& \leq \sup \left\{ \left\| \int_{A} s' \diff{\mu} \right\|_{\W} \::\:  s' \in \Sim(\A, V), \::\: \|s'(\cdot)\| \leq f^p \right\}     \:\leq\:   \widehat{\mu}_p(f, A)^p
	\end{align*}
	Consequently we obtain $\widehat{M}_p(f, A) \leq \widehat{\mu}_p(f,A)$.\\
	For the reverse inequality, let $s = \sum_{i=1}^{n} \chi_{A_i} v_i \:\in\: \Sim\big[ \A, \V \big], \; \|s(\cdot)\| \leq f^p$.
	Choose a unit vector $v' \in \V'$ and a functional $\varphi \in \mathbb{V'}^*$ such that $\|\varphi\| = \varphi(v') = 1$.
	Define the operators $H_i := \varphi \cdot v_i $ and $s' = \sum_{i=1}^{n} \chi_{A_i} H_i$.
	Then $|s'| \leq f^p$ and we have:
	$$ \left\| \int_{A} s \diff{\mu} \right\|_{\W}   =  \left\| \sum_{i=1}^n \mu(A \cap A_i) v_i \right\|_{\W} = \left\| \left(\sum_{i=1}^n M(A \cap A_i) H_i\right) v' \right\|_{\W}  \leq \left\| \int_{A} s' \diff{M} \right\|_{op} \leq  \widehat{M}_p(f, A)$$
	As a special case when $p=1$ we obtain $\widehat{M} = \widehat{\mu}$, which in turn yields the desired equality for $p=\infty$.
\end{proof}

Now we fix an integrable operator-valued function $g \in \mathcal{I}_D \big[M, L(\V', \V) \big]$ with the corresponding integral measure $\gamma = Mg := \int_{(\cdot)} g \diff{M} \allowbreak \in \CA\big[ \A, L(\V', \W) \big]$.
Our aim is to show that under reasonable conditions the change of measure formula is valid for integration with respect to $\gamma$.
This technical fact will be crucial for working with operator-valued logarithmic derivatives later on.

\begin{lemma} \label{31}
	Suppose $f \:\in\: \SM\big(\mu, \: [0,\infty]\big)$ \:and\: $1\leq p <\infty$. Then:
	$$\widehat{\mu}_p \big(|g| f, A\big)    \;=\;    \sup \Big\{ \widehat{\mu}_p \big(g[s], A\big) \::\: s \:\in\: \Sim(\A,\V'), \: |s| \leq f \Big\} ,   \quad A \in \A$$
\end{lemma}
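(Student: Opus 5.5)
The inequality ``$\geq$'' is a monotonicity argument, and the inequality ``$\leq$'' comes from an explicit approximating sequence plus Fatou's Lemma \ref{24}. Throughout I use the identity $\widehat{\mu}_p(h,A)^p = \widehat{\mu}_1(h^p, A)$ for $h \in \SM(\mu,[0,\infty])$, which is immediate from the definition of the $p$-semivariation. I read the statement with the convention $0\cdot\infty = 0$ for $|g|f$.

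\emph{Reduction.} Since $g$ is strongly $\mu$-measurable (being integrable), there is a $\widehat{\mu}$-null set $N\in\A$ off which $g$ is a pointwise limit of $\A$-simple functions. I replace $g$ by $g\chi_{X\setminus N} \in \SM(\A, L(\V',\V))$. By Theorem \ref{22}(b),(c) this changes neither side of the claimed equality, since all functions involved change only on a null set. The same remark lets me assume $f \in \SM(\A,[0,\infty])$. Also, for $s\in\Sim(\A,\V')$ the function $g[s]$ is strongly measurable, because it is the pointwise limit of $g_n[s]$ with $g_n$ simple.

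\emph{Inequality ``$\geq$''.} For any simple $s$ with $|s|\leq f$ we have pointwise $|g[s]| \leq \|g\|_{op}|s| \leq |g| f$. Theorem \ref{22}(b) then gives $\widehat{\mu}_p(g[s],A)\leq \widehat{\mu}_p(|g|f,A)$, and taking the supremum yields this direction.

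\emph{Inequality ``$\leq$'' (main step).}
\begin{enumerate}[label=\arabic*)]
\item Apply Theorem \ref{11} to $g$ (with $\V'$ in place of $\V$ and $\V$ in place of $\W$). This gives $s_n \in \Sim(\A,\V')$ with $|s_n|\in\{0,1\}$ and $\|g(x)[s_n(x)]\| \to \|g(x)\|_{op}$ for every $x$.
\item Choose simple real functions $0 \leq r_n \nearrow f$ pointwise; if $f(x)=\infty$, then $r_n(x)\to\infty$.
\item Set $t_n := r_n s_n \in \Sim(\A,\V')$. Then $|t_n| \leq r_n \leq f$.
\item Check that $|g[t_n]| = r_n\,\|g[s_n]\| \to |g|f$ pointwise. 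If $|g(x)|=0$, every term and the limit vanish. If $|g(x)|>0$ and $f(x)=\infty$, both factors behave well and the product tends to $\infty$. Otherwise it is a product of convergent sequences.
\item Hence $|g[t_n]|^p \to (|g|f)^p$ pointwise, and in particular $\liminf_n |g[t_n]|^p = (|g|f)^p$.
\item Fatou's Lemma \ref{24} then gives
\[
\widehat{\mu}_p(|g|f,A)^p = \widehat{\mu}_1\big((|g|f)^p,A\big) \leq \liminf_{n\to\infty} \widehat{\mu}_1\big(|g[t_n]|^p,A\big) = \liminf_{n\to\infty} \widehat{\mu}_p(g[t_n],A)^p .
\]
The right-hand side is at most the $p$-th power of the supremum in the statement, which completes this direction.
\end{enumerate}

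\emph{Expected obstacle.} The delicate points are technical rather than conceptual. The first is the null-set reduction, needed so that Theorem \ref{11} (stated for $\SM(\A,\cdot)$) applies to a merely $\mu$-strongly measurable $g$. The second is the pointwise limit when $f$ takes the value $\infty$ or $g$ vanishes. Both are handled by the conventions above, together with the fact that Theorem \ref{22} is insensitive to modifications on $\widehat{\mu}$-null sets.
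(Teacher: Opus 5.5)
Your proof is correct and follows essentially the same route as the paper. The paper also obtains the unit-norm sequence $s_n$ from Theorem~\ref{11}, multiplies it by simple functions increasing to $f$, applies Fatou's Lemma~\ref{24}, and gets the reverse inequality from monotonicity; your null-set reduction and your handling of the cases $f=\infty$ and $g=0$ spell out what the paper leaves implicit. One small point: replacing $f$ by $\tilde f$ shrinks the class of admissible $s$, so Theorem~\ref{22} alone does not show the right-hand side is unchanged. This is harmless, because your $t_n$ satisfy $|t_n|\le \tilde f\le f$ and are therefore admissible for the original $f$.
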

\begin{proof}
	According to \ref{10} and \ref{11} there exist sequences:
	$$\{f_n\}_1^\infty \:\in\: \Sim\big[\A, [0,\infty)\big] \::\: f_n \nearrow f  \;\:\widehat{\mu}\text{ - a.e.}          \qquad\quad          \{s_n\}_1^\infty \:\in\: \Sim\big[\A, \V' \big] \::\: \big|s_n\big|\leq 1, \; \big|g s_n\big| \xrightarrow{\widehat{\mu}\text{ - a.e.}} \big|g\big|$$
	Using Fatou's Lemma \ref{24}, we get:
	\begin{multline*}
		\widehat{\mu}_p \big(|g| f, A\big)     \;=\;     \widehat{\mu}_1 \big(|g|^p f^p, A\big)^{1/p}     \;\leq\;     \liminf_{n\to\infty} \: \widehat{\mu}_1 \big(|g[f_ns_n]|^p, A\big)^{1/p}     \;\leq\;    \\
		 \leq\;    \sup \Big\{ \widehat{\mu}_p \big(g[s], A\big) \::\: s \:\in\: \Sim(\A,\V'), \: |s| \leq f \Big\}     \;\leq\;   \widehat{\mu}_p \big(|g| f, A\big)
	\end{multline*}
\end{proof}

\begin{lemma} \label{32}
	\hfill
	\vspace{-25pt}	
	\begin{enumerate}[label=\alph*)]
	\item
	$\displaystyle \forall s \in \Sim(\A, \V'): \quad gs \in \mathcal{I}_D(\mu, \V),      \quad      \int_{(\cdot)} gs \diff\mu = \int_{(\cdot)} s \diff\gamma$	

	\item
	$\displaystyle \forall (v', w^*) \in \V' \times \W^*: \quad       gv' \in \mathcal{I}_D(w^*\mu, \V),     \quad      \int_{(\cdot)} gv' \diff[w^*\mu] \:=\: w^*\left[  \left( \int_A g \diff M \right)v' \right] $	
	\end{enumerate}
\end{lemma}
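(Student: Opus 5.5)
Both parts will be reduced to the definition of Dobrakov integrability via approximating sequences. The key fact is that an approximating sequence for $g$ with respect to $M$ can be transported along the linear map "apply to a fixed vector".

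For part (a), it suffices by linearity of both sides to treat $s = \chi_B v'$ with $B\in\A$ and $v'\in\V'$. Indeed, $\int_E \chi_B v' \diff\gamma = \gamma(B\cap E)v'$, and integrability with respect to $\mu$ is closed under linear combinations (Definition \ref{15}). Fix $\{g_n\}\in\mathrm{AS}(g,M)$. Each $g_n=\sum_i \chi_{A_{n,i}}H_{n,i}$ is simple, so $g_n\chi_B v' = \sum_i \chi_{A_{n,i}\cap B}H_{n,i}v'\in\Sim(\A,\V)$. Moreover
$$\int_E g_n\chi_Bv'\diff\mu \;=\; \sum_i \mu(A_{n,i}\cap B\cap E)H_{n,i}v' \;=\; \Big(\int_{E\cap B} g_n \diff M\Big)v'.$$
Two things need checking.
\begin{enumerate}[label=\roman*)]
\item $g_n\chi_Bv'\to g\chi_Bv'$ $\widehat{\mu}$-a.e. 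This holds because $g_n\to g$ $\widehat{M}$-a.e. in operator norm, and $\widehat M=\widehat\mu$ by Theorem \ref{30}, so the null sets coincide.
\item The integrals converge setwise. Since $\int_{E\cap B} g_n\diff M$ converges in $L(\V',\W)$ for every $E$, evaluation at $v'$ gives convergence in $\W$, which is condition (i) of Lemma \ref{13}.
\end{enumerate}
Hence $\{g_n\chi_Bv'\}\in\mathrm{AS}(g\chi_Bv',\mu)$, and the limit equals $\gamma(E\cap B)v' = \int_E \chi_Bv'\diff\gamma$.

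For part (b), first apply (a) with $s=\chi_X v'$. This gives $gv'\in\mathcal I_D(\mu,\V)$ and $\int_A gv'\diff\mu = \gamma(A)v' = \big(\int_A g\diff M\big)v'$. Then apply Theorem \ref{18} with $T=w^*\in L(\W,\F)$. This yields $gv'\in\mathcal I_D(w^*\mu,\V)$ and $\int_A gv'\diff[w^*\mu]=w^*\big[\int_A gv'\diff\mu\big]$, which is the claim. The approximating sequences of Theorem \ref{18} live with respect to the same measurable structure, so no further work is needed.

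The only delicate point is the almost-everywhere bookkeeping in step i). One must ensure that "$\mu$-a.e." for $M$ and for $\mu$ refer to the same null sets, and that Definition \ref{15} requires a.e. convergence only with respect to the measure in question. Theorem \ref{30} (with $p=1$) settles this, since $\widehat M=\widehat\mu$ and all three submeasures $\overline\mu,\widehat\mu,\widetilde\mu$ share null sets. Everything else is a direct computation on simple functions.
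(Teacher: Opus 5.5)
Your proof is correct and follows essentially the paper's route: you push an approximating sequence $\{g_n\}\in\mathrm{AS}(g,M)$ through evaluation at a fixed $v'$, use $\widehat M=\widehat\mu$ (Theorem \ref{30}) for the null sets, and invoke Theorem \ref{18} for part (b). The differences are minor: working with $g_n\chi_B v'$ directly avoids the paper's tacit assumption that $\chi_{A_i}\,g v'_i$ is integrable, and you obtain (b) from (a) plus Theorem \ref{18} with $T=w^*$, whereas the paper applies the functional $(v',w^*)\in L(\V',\W)^*$ to $M$ directly.
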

\begin{proof}
	Let $\{g_n\}_1^\infty \in \mathrm{AS}(g, M)$ be an approximating sequence in the sense of Definition \ref{15}.
	Take $v' \in \V'$.
	Since $\widehat{\mu} = \widehat{M}$, we have $\Sim(\A, \V) \ni g_n v' \xrightarrow{\widehat{\mu}\text{ - a.e.}} gv'$.
	We can also view $v'$ as a bounded operator $v' : L(\V', \W) \ni H \mapsto Hv' \in \W$.
	Applying Theorem \ref{18} we get:
	$$\int_A g_n v' \diff\mu     \:=\:     \left( \int_A g_n \diff M \right)v'    \quad\xrightarrow{n\to\infty}\quad     \left( \int_A g \diff M \right)v'     \:=\:      \int_A g \diff[v'M] $$
	where the equality on the left is evident for simple functions, and passing to the limit is justified by continuity of the operator.
	Thus, by Definition \ref{15} we get $gv' \in \mathcal{I}_D(\mu, \V)$. \\
	Now if $s = \sum_{i=1}^{m} \chi_{A_i} v'_i \:\in\: \Sim(\A, \V')$, then $gs = \sum_{i=1}^{m} \chi_{A_i} g v'_i \:\in\: \mathcal{I}_D(\mu, \V)$ and :
	$$\int_A s \diff\gamma     \:=\:      \sum_{i=1}^{m} \gamma(A \cap A_i) v'_i       \:=\:      \sum_{i=1}^{m} \left( \int_{A \cap A_i} g \diff M \right) v'_i    \:=\:   \sum_{i=1}^{m} \int_{A \cap A_i} g v'_i \diff\mu     \:=\:    \int_{A} g s \diff\mu$$
	This proves the first assertion of the lemma. \\
	Now let us take $(v', w^*) \in \V' \times \W^*$.
	In an obvious way the pair $(v', w^*)$ can be thought of as an element of $L(\V', \W)^*$.
	Again, applying Theorem \ref{18} and taking into account that $\widehat{w^*\mu} \leq \widehat{\mu}$, we obtain:
	$$\Sim(\A, \V)    \:\ni\:     g_n v'     \:\xrightarrow{w^*\mu\text{ - a.e.}}\:      gv'       \:\in\:          \mathcal{I}_D(w^*\mu, \V)$$
	$$\int_A g_n v' \diff[w^*\mu]    \:=\:    w^*\left[  \left( \int_A g_n \diff M \right)v' \right]     \quad\xrightarrow{n\to\infty}\quad     w^*\left[  \left( \int_A g \diff M \right)v' \right]      \:=\:      \int_A g \diff[(v',w^*)M] $$	
	This establishes the second assertion by Lemma \ref{13} and Definition \ref{15}.
\end{proof}

\begin{theorem}[Change of measure formula] \label{33}
	Suppose $f \in \SM(\widehat{\mu}, \V')$ \:and\: $1 \leq p < \infty$, then:
	\begin{enumerate}[label=\alph*)]
		\item
		$\displaystyle \widehat{\gamma}_p(f, A)     \:=\:    \sup \left\{      \left\| \int_{A} g[s] \diff{\mu} \right\|_\W^{1/p}     \::\:    s \in \Sim(\A,\V'),    \;     |s| \leq  |f|^p   \right\}$
				
		\item
		$\displaystyle \widehat{\gamma}_p(f, \cdot)     \:\leq\:    \widehat{\mu}_1\big(|g| |f|^p, \cdot \big)^{1/p}$
		
		\item
		$\displaystyle |g| |f| \in \mathcal{L}_1(\mu)   \quad\implies\quad     f \in  \mathcal{L}_1(\gamma, \V')      \;,\quad       \int_{(\cdot)}f \diff\gamma = \int_{(\cdot)}gf \diff\mu$
	\end{enumerate}
\end{theorem}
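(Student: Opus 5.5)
Part (a) is essentially a rewriting of the definition of $\widehat{\gamma}_p$ via Lemma \ref{32}(a). By definition, $\widehat{\gamma}_p(f,A)^p$ is the supremum of $\left\|\int_A s\diff\gamma\right\|_\W$ over $s\in\Sim(\A,\V')$ with $|s|\le|f|^p$. Lemma \ref{32}(a) states that for every such simple $s$ we have $gs\in\mathcal{I}_D(\mu,\V)$ and $\int_A s\diff\gamma=\int_A g[s]\diff\mu$. Substituting this identity term by term in the supremum gives (a) immediately.

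For (b) we bound each term in (a). For a fixed simple $s$ with $|s|\le|f|^p$ we have $|g[s]|\le |g|\,|s|\le |g|\,|f|^p$ pointwise. Since $g[s]\in\mathcal{I}_D(\mu,\V)$, Theorem \ref{21}(a) with $p=1$ and density $|g||f|^p$ gives
$$\left\|\int_A g[s]\diff\mu\right\|_\W\le \widehat{\mu}_1\big(|g||f|^p,A\big).$$
Here $|g||f|^p$ is strongly $\mu$-measurable because $|g|$ is measurable by Theorem \ref{10} and $\widehat M=\widehat\mu$. Taking the supremum over $s$ and then the $p$-th root yields (b). One could alternatively use Lemma \ref{31}, but the direct estimate is shorter.

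For (c) I take $p=1$. Part (b) gives $\widehat{\gamma}_1(f,\cdot)\le\widehat{\mu}_1(|g||f|,\cdot)$, and the right-hand side is continuous by assumption, so $\widehat\gamma_1(f,\cdot)$ is continuous. Continuity requires only domination plus monotonicity of submeasures. Strong measurability of $f$ with respect to $\gamma$ holds because $\gamma\ll\widehat M=\widehat\mu$ by Theorem \ref{17}, so $\widehat\mu$-null sets are $\widehat\gamma$-null and $f\in\SM(\widehat\gamma,\V')$. Hence $f\in\mathcal{L}_1(\gamma,\V')$.

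\textbf{Main obstacle.} It remains to show that $f$ is $\gamma$-integrable and that the integrals agree, and I expect this to be the hardest step. I would take simple $s_n\to f$ a.e. with $|s_n|\nearrow|f|$, which exists by Theorem \ref{10}. By Lemma \ref{32}(a), $\int s_n\diff\gamma=\int gs_n\diff\mu$. Now $gs_n\to gf$ a.e. with $|gs_n|\le|g||f|\in\mathcal{L}_1(\mu)$, so Theorem \ref{28} gives $gs_n\to gf$ in $L_1(\mu)$ with $gf\in\mathcal{L}_1(\mu,\V)$. The remaining difficulty is that $\mathcal{L}_1$ need not consist of integrable functions. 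I would invoke the fact from \autocite{Dobrakov_2} that $\mathcal{L}_1(\mu,\V)\subseteq\mathcal{I}_D(\mu,\V)$ with $\left\|\int_A h\diff\mu\right\|\le\widehat\mu_1(h,A)$, so that $\int_{(\cdot)} gs_n\diff\mu\to\int_{(\cdot)}gf\diff\mu$ uniformly on $\A$. Then the integrals $\int_{(\cdot)}s_n\diff\gamma$ converge setwise, and by Definition \ref{15} and Lemma \ref{13} this gives $s_n\in\mathrm{AS}(f,\gamma)$ together with $\int_{(\cdot)}f\diff\gamma=\int_{(\cdot)}gf\diff\mu$.
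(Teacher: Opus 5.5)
Your proof is correct and follows the paper's own argument: (a) is Lemma \ref{32}(a) substituted into the definition of $\widehat{\gamma}_p$, (b) bounds each term of (a) by $\widehat{\mu}_1(|g||f|^p,A)$, and (c) combines (b) with simple $s_n\to f$, $|s_n|\nearrow|f|$, dominated convergence for $gs_n\to gf$, and Lemma \ref{32}(a). The differences are slight simplifications: in (b) you use the pointwise bound $|g[s]|\le|g||f|^p$ with Theorem \ref{21}(a) instead of going through Lemma \ref{31}, of which only the monotonicity direction is needed, and in (c) you check $(s_n)\in\mathrm{AS}(f,\gamma)$ directly from Definition \ref{15} instead of also applying Theorem \ref{28} with respect to $\gamma$. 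The ``difficulty'' you announce is misstated, though, because functions in Dobrakov's $\mathcal{L}_1$ (those with continuous $\widehat{\mu}_1(f,\cdot)$) are always integrable; only functions with merely finite seminorm can fail to be, and the inclusion $\mathcal{L}_1\subseteq\mathcal{I}_D$ is exactly what you then invoke and what the paper uses tacitly.
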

\begin{proof}
	The first two statements follow from \ref{31} and \ref{32} :
	\begin{multline*}
		\widehat{\gamma}_p(f, A) \;:=\;   \sup \left\{   \left\| \int_{A} s \diff{\gamma} \right\|_\W^{1/p}   \::\:s \in \Sim(\A,\V'),    \;     |s| \leq  |f|^p    \right\}    \\
		 =\;        \sup \left\{      \left\| \int_{A} g[s] \diff{\mu} \right\|_\W^{1/p}     \::\:    s \in \Sim(\A,\V'),    \;     |s| \leq  |f|^p   \right\}   \\
		 \leq\;    \sup \left\{      \widehat{\mu}_1\big( g[s], A\big)^{1/p}     \::\:    s \in \Sim(\A,\V'),    \;     |s| \leq  |f|^p   \right\}       \;=\;     \widehat{\mu}_1\big(|g| |f|^p, \cdot \big)^{1/p}
	\end{multline*}
	
	If now $|g| |f| \in \mathcal{L}_1(\mu)$, then the second statement implies that $\widehat{\gamma}_1(f, \cdot)$ is also continuous, hence $f \in  \mathcal{L}_1(\gamma, \V')$.
	By Theorem \ref{10} there exists a sequence $\{s_n\}_1^\infty \in \Sim(\A,\V')^\mathbb{N}$ such that $s_n \to f \;\mu\text{-a.e.}$ and $|s_n| \nearrow |f| \;\mu\text{-a.e.}$.
	Then $g s_n \to gf \;\mu\text{-a.e.}$ and at the same time $|g s_n| \leq |g||f| \;\mu\text{-a.e.}$.
	By the Dominated Convergence Theorem \ref{28} and Lemma \ref{32} we get:
	\begin{align*}
		\mathcal{L}_1(\gamma, \V') \ni s_n \xrightarrow{L_1} f \in \mathcal{L}_1(\gamma, \V')     \quad&\text{and}\quad      \mathcal{L}_1(\mu, \V) \ni gs_n \xrightarrow{L_1} gf \in \mathcal{L}_1(\mu, \V) \\
		\int_{A} f \diff\gamma  \;=\;  \lim_{n\to\infty} \int_{A} s_n \diff\gamma  \;&=\;   \lim_{n\to\infty} \int_{A} g s_n \diff\mu \;=\; \int_{A} g f \diff\mu
	\end{align*}
\end{proof}

Thus it has been verified, that under reasonable assumptions on the functions the Dobrakov integral with respect to $\gamma$ behaves as expected, and the change of measure formula is valid.
In the sequel when talking about integration of operator-valued functions we will allow ourselves not to mention explicitly the transition to measure $M$ and write $\mu g$ to mean $M g$, implying the construction described in this section.

It should be noted that the question of existence of a Dobrakov density of one measure relatively to another is rather complex and lies beyond the scope of this paper. 
A Radon-Nikodym theorem for the Dobrakov integral in the form of necessary and sufficient conditions for a pair of measures is obtained in \autocite{Maynard_AGRNT}.
However these conditions seem challenging to verify in practice.

\newpage
\section{Continuity and differentiablity of vector measures} \label{34}
The theory of differentiable measures has its origins in the works of Fomin and was suggested by him as a basis for infinite-dimensional generalization of Sobolev-Schwartz distribution theory \autocite[p. ix]{Bogachev_DM}.
An introduction to the main concepts of the theory at a high level of generality can be found in \autocite{Daletsky_Fomin},
while the monograph \autocite{Bogachev_DM} contains an in depth treatment of real-valued differentiable measures on locally convex spaces.
At the same time publications dedicated to continuous or differentiable vector measures are quite rare.
For that reason the main purpose of this section is to transfer a certain set of results to the setting of vector and operator-valued measures and Dobrakov integration. 

As desribed in \autocite{Daletsky_Fomin, Bogachev_DM} the notions of continuity and differentiability for real-valued measures may be introduced relatively to various topologies, such as the topologies of setwise convergence and convergence in variation.
In this particular case these two topologies turn out to be equivalent for both concepts.
In the vector-valued context there also arises naturally the topology of convergence in semivariation, and as demonstrated in \autocite{Romanov_cont, Romanov_diff}, these three topologies in general specify pairwise non equivalent conditions of continuity and differentiability.
We will be mainly concerned with topologies of setwise and semivariational convergence.

\subsection{General notions}

Throughout section \ref{34} we set $(X,\A) = (\mathbb{R}^d, \mathcal{B}_{\mathbb{R}^d})$.
The Lebesgue measure on $X$ is denoted by $Leb_X$ or $\diff{x}$.
For manipulating mixed partial derivatives it will be convenient to introduce the following sets of multivectors and finite sequences on $X$:

\begin{tabular}{l l l}
	$M^kX := \big\{ h: X \to \omega \:\big|\: |h| = k \big\}$ 				&\qquad\qquad&		$S^kX := X^{\{1 \cdots k\}}$									\\
	$M_0^kX  := \big\{ h: X \to \omega \:\big|\: 0 \leq |h|\leq k \big\}$		&\qquad\qquad&		$S_0^kX := \big\{ s: \{1 \cdots n\} \to X \:\big|\: 0 \leq n \leq k \big\}$		\\
	$MX := \big\{ h: X \to \omega \:\big|\: |h|<\infty \big\}$				&\qquad\qquad&		$SX := \big\{ s: \{1 \cdots n\} \to X \:\big|\: n \in \omega \big\}$
\end{tabular}

For two multivectors $\alpha, \beta \in MX$ define the combinatorial coefficient:
$$C(\alpha,\beta) := \prod_{x \in X} \frac{(\alpha(x)+\beta(x))!}{\alpha(x)! \: \beta(x)!}$$
Its combinatorial meaning is the following: if $(\alpha$, $\beta$) is a pair of multisets and every element of $\alpha+\beta$ is assigned a unique number, there are $C(\alpha,\beta)$ ways to split these numbers into two groups, so that the resulting pair of multisets is still $(\alpha,\beta)$.

As before $Y, \V, \W$ are Banach spaces over $\F \in \{\mathbb{R}, \mathbb{C}\}$.
For notational brevity in this section we let  $\mathbb{M} := \CA(\A,Y)$.
We also assume a fixed linear isometry $Y \hookrightarrow L(\V,\W)$ with the corresponding semivariation denoted by $\widehat{\mu}, \: \mu \in \mathbb{M}$.

Due to the translational invariance of $\A$ the following operations are correctly defined:
$$T \::\: \M \times X    \:\ni\:  (\mu, x)  \:\mapsto\:    \mu_x := \big[ A \mapsto \mu(A+x) \big]  \:\in\:  \M$$
$$T_\mu := T(\mu, \cdot) \::\: X \ni x \mapsto \mu_x \in \M \qquad T_{\mu, A} \::\: X \ni x \mapsto \mu(A+x) \in Y $$

For a fixed measure $\mu \in \M$ it is possible to examine continuity, differentiability and smoothness of $T_\mu : X \to \M$ in some topology $\Top$ on $\M$.
Thus we are led to the following defintions.

\begin{definition}[Differentiability of a measure, \autocite{Daletsky_Fomin}, ch. IV.2]
	Let $\Top \in \big\{\Top_{set}, \Top_{sv}, \Top_{v} \big\}$.
	\begin{enumerate}[label=\alph*)]
	\item
	Measure $\mu \in \M$ is called Fomin differentiable in topology $\Top$, if the mapping $T_\mu : X \to (\M, \Top)$ is everywhere Frechet differentiable \autocite[p. 331]{Bogachev_Smolyanov_Sobolev}.
	The space of such measures is denoted by $\mathcal{D}(Y, \Top)$.
	
	\item
	Analogously, measure $\mu \in \M$ is called Fomin differentiable along vector $h \in X$ in topology $\Top$, if the mapping $T_\mu : X \to (\M, \Top)$ is everywhere differentiable along $h$.
	The space of such measures is denoted by $\mathcal{D}(Y, \Top, h)$.
	\end{enumerate}
\end{definition}

It is easy to see that in any of the above topologies differentiablity of $\mu$ is equivalent to differentiability of $T_\mu$ at zero:
	$$ \mu \in \mathcal{D}(Y, \Top) \iff T_{\mu} \in \mathcal{D}(0; \Top)   \qquad      \mu \in \mathcal{D}(Y, \Top, h) \iff \exists \: \partial_hT_{\mu}(0) \text{in } \Top$$ 

\begin{proposition}[Differentiability along a vector in $\Top_{set}$]
	For $\mu \in \M$ and $h \in X$ the following conditions are equivalent:
	\begin{enumerate}[label=\alph*)]
		\item
		$\displaystyle \forall A \in \A: \: \exists \:\lim_{t\to\infty} \big( \mu(A+th) - \mu(A) \big)/t \: =: \: \partial_h\mu(A)$
		
		\item
		$\displaystyle \mu \in \mathcal{D}(Y, \Top_{set}, h)$
	\end{enumerate}
	Measure $\partial_h\mu$ is called the Fomin derivative of $\mu$ along vector $h$.
\end{proposition}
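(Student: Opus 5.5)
The limit in (a) should be read as $t\to 0$; that is the only reading under which it is a directional derivative. The plan is to unwind what differentiability of $T_\mu : X \to (\M,\Top_{set})$ along $h$ means. The topology $\Top_{set}$ is the locally convex topology of setwise convergence. So (b) says: for every $x\in X$ there is an element $\nu_x\in\M$ such that
$$\frac{\mu_{x+th}(A)-\mu_x(A)}{t}\xrightarrow{t\to 0}\nu_x(A)\qquad \forall A\in\A .$$
The essential point is that $\nu_x$ is required to lie in $\M=\CA(\A,Y)$, i.e. to be a countably additive vector measure. Condition (a), by contrast, only asserts a setwise limit for $x=0$. The proof therefore has three parts: the easy implication, the reduction to $x=0$, and countable additivity of the limit.

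\textbf{(b)$\Rightarrow$(a).} Evaluate the difference quotient at $x=0$ on a fixed $A\in\A$. Setwise convergence is exactly convergence of the evaluations $\mu\mapsto\mu(A)$, so the limit in (a) exists and equals $\nu_0(A)$.

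\textbf{(a)$\Rightarrow$(b), reduction to $x=0$.} Use translation invariance of $\A=\mathcal B_{\mathbb R^d}$. For each $x$ and $A$,
$$\frac{\mu_{x+th}(A)-\mu_x(A)}{t}=\frac{\mu\big((A+x)+th\big)-\mu(A+x)}{t}.$$
By (a) applied to the set $A+x\in\A$, this converges to $\partial_h\mu(A+x)$. Hence the candidate derivative at $x$ is $\nu_x=(\partial_h\mu)_x$. This is consistent with the remark preceding the proposition that differentiability everywhere reduces to differentiability at zero. It remains to show that $\nu_0:=\partial_h\mu$ belongs to $\M$; then each translate $(\partial_h\mu)_x$ belongs to $\M$ as well.

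\textbf{Countable additivity of $\partial_h\mu$.} This is the main obstacle, and I would handle it with the Vitali–Hahn–Saks–Nikodym Theorem \ref{3}.
\begin{enumerate}[label=\arabic*)]
\item Fix any sequence $t_n\to 0$ with $t_n\neq 0$, and set $\mu_n:=(\mu_{t_nh}-\mu)/t_n$.
\item Each $\mu_n$ is a linear combination of $\mu$ and its translate $\mu_{t_nh}$. Both are countably additive in norm, since translation preserves disjoint unions. Hence $\mu_n\in\CA(\A,Y)$.
\item By (a), $\lim_n\mu_n(A)=\partial_h\mu(A)$ exists for every $A\in\A$.
\item Theorem \ref{3} then gives $\partial_h\mu\in\CA(\A,Y)=\M$.
\end{enumerate}
The setwise limit in (a) is a limit over the continuous parameter $t\to 0$, so it coincides with the limit along any such sequence. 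Thus the measure obtained does not depend on the choice of $(t_n)$. This yields (b), and it also justifies calling $\partial_h\mu$ a measure, the Fomin derivative.
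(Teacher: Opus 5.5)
Your proof is correct and follows the paper's argument. You use the Vitali--Hahn--Saks--Nikodym theorem (along a sequence $t_n\to0$) to get $\partial_h\mu\in\M$, translation invariance of $\A$ to get the derivative $(\partial_h\mu)_x$ at every other point, and evaluation at $x=0$ for the converse. You are also right that the limit in (a) must be read as $t\to0$; the $t\to\infty$ in the statement is a typo.
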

\begin{proof}
	If the limit in (a) exists for every set, then the Vitali-Hahn-Saks-Nikodym Theorem \ref{3} assures that $\partial_h\mu \in \M$.
	By definiton $\partial_h\mu = \partial_h T_\mu(0)$.
	Next we can obtain differentiability of $T_\mu$ along $h$ at any other point:
	$$\frac{T_\mu(x+th) -T_\mu(x)}{t}[A]  \:=\:   \frac{\mu(A+x+th) - \mu(A+x)}{t}      \;\xrightarrow{t \to 0}\;     \partial_h\mu(A+x) $$
	The reverse implication is obvious.
\end{proof}

Inductively we can define (in case of existence) mixed Fomin derivatives of higher order.
The next assertion follows by analogous reasoning, so the proof is omitted.

\begin{proposition}[Mixed derivatives in $\Top_{set}$]
	Suppose $\mu \in \M$  \:and\: $s \:\in\: S^kX$.
	Define \: $\partial_{s} := \partial_{s(1)} \cdots \partial_{s(k)}$.\\
	Then the following conditions are equivalent:
		\begin{enumerate}[label=\alph*)]
			\item
			$\displaystyle \exists \: \partial_{s}T_{\mu}(0) \:\in\: \mathbb{M} \:\text{ in topology } \Top_{set}$
			
			\item
			$\displaystyle \exists \: \partial_{s}T_{\mu} : X \to \mathbb{M} \:\text{ in topology } \Top_{set}$
			
			\item
			$\displaystyle \forall A \in \A : \; \exists \: \partial_{s}T_{\mu,A}(0) \:\in\: Y$
			
			\item
			$\displaystyle \forall A \in \A : \; \exists \: \partial_{s}T_{\mu,A}: X \to Y$
			
			\item
			$\displaystyle \exists \: \partial_{s}\mu \in \mathbb{M}$
		\end{enumerate}
		In this case relatively to topology $\Top_{set}$ we have the expressions:
		$$T\big( \partial_{s}\mu , x \big) \:=\: \partial_{s}T_{\mu}(x)        \qquad\text{and}\qquad        \partial_{s}T_{\mu}(x)[A]   \:=\:   \partial_{s}T_{\mu,A}(x)    \:=\:     \partial_{s}\mu(A+x)$$
\end{proposition}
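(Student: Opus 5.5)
We argue by induction on the order $k$, reusing the single-vector argument just given. The chain of equivalences we aim for is (e) $\Rightarrow$ (b) $\Rightarrow$ (a) $\Rightarrow$ (c) $\Rightarrow$ (e), together with (b) $\Leftrightarrow$ (d). We write $s = (s(1), s')$ with $s' \in S^{k-1}X$, so that $\partial_s = \partial_{s(1)}\partial_{s'}$.

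\textbf{Setwise reformulation.} In $\Top_{set}$, differentiability of $T_\mu$ at a point is, by definition, the existence for every $A$ of the corresponding limit of difference quotients of $T_{\mu,A}$. Indeed, $\Top_{set}$ is the product topology of pointwise evaluations on $\A$, and $T_\mu(x)[A] = T_{\mu,A}(x)$. This gives (a) $\Leftrightarrow$ (c) and (b) $\Leftrightarrow$ (d) directly, provided that at each inductive stage the intermediate derivative $\partial_{s'}T_\mu$ is again a map into $\M$.

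\textbf{Inductive step.} Assume the proposition for $s'$, and suppose (c) holds for $s$. Then in particular $\partial_{s'}T_{\mu,A}$ exists near $0$ for every $A$, and by the inductive hypothesis $\nu := \partial_{s'}\mu \in \M$ with
$$\partial_{s'}T_{\mu,A}(x) = \nu(A+x).$$
Hence $\partial_s T_{\mu,A}(0) = \lim_{t\to0}\big(\nu(A+t s(1)) - \nu(A)\big)/t$ exists for every $A$. The previous proposition, which rests on the Vitali--Hahn--Saks--Nikodym Theorem \ref{3}, then gives $\partial_{s(1)}\nu \in \M$. This is (e) with $\partial_s\mu := \partial_{s(1)}\partial_{s'}\mu$.

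\textbf{Derivatives at every point.} From (e), the translation identity
$$\frac{\nu(A+x+t s(1)) - \nu(A+x)}{t} \to \partial_{s(1)}\nu(A+x)$$
yields (d) and (b), together with the formulas $T(\partial_s\mu, x) = \partial_s T_\mu(x)$ and $\partial_s T_\mu(x)[A] = \partial_s\mu(A+x)$. Finally, (b) $\Rightarrow$ (a) is trivial.

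\textbf{Main obstacle.} The one delicate point is the meaning of the iterated derivative: in (a) and (c) the higher derivative at $0$ presupposes that the lower-order derivative exists in a neighbourhood of $0$, or at least along the line $\mathbb{R}s(1)$. Under that reading, the inductive hypothesis applied at nearby points supplies a genuine measure $\nu$, by countable additivity via Theorem \ref{3} at order $k-1$. Without this, the limit of difference quotients of set functions could fail to be countably additive, and the argument would break. So the induction must carry the statement \emph{``$\partial_{s'}\mu \in \M$ and $\partial_{s'}T_\mu(x) = T(\partial_{s'}\mu, x)$ for all $x$''} at each stage.
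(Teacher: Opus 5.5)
Your proof is correct and is essentially what the paper means when it says the result follows by reasoning analogous to the single-vector case. Both use induction on $k$, with the Vitali--Hahn--Saks--Nikodym theorem ensuring each setwise limit is countably additive, and translation invariance of $\A$ transferring derivatives from $0$ to every $x$ (your concern in the last paragraph is milder than you state, since the inductive hypothesis at the single point $0$, quantified over all $A$, already gives $\partial_{s'}T_{\mu,A}(x)=\partial_{s'}\mu(A+x)$ for every $x$, which is exactly what your inductive step uses).
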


Now, bearing in mind the construction in subsection \ref{29}, let us again consider the idea of reinterpreting an operator-valued measure as acting on operators via left composition.
More precisely we have the following linear operator, that is a $\|\cdot\|_{sv}$-isometry by \ref{30} and can be seen as a form of lifting: 
$$\Lift \::\: \CA\big[\A,  L(\V,\W) \big]      \:\to\:       \CA\left[\A,  L\big[ L(\V), L(\V,\W) \big] \right]$$
$$\Lift(\mu)    \;:\;     \A \ni A      \;\mapsto\;     \big[  L(\V) \ni F \mapsto \mu(A)\circ F \in L(\V,\W) \big]      \;\in\;      L\big[ L(\V), \: L(\V,\W) \big]$$

The next proposition verifies that, as expected, $\Lift$ commutes with translations and Fomin differentiation.
\begin{proposition} \label{35}
	Let $\mu \in \CA\big[\A,  L(\V,\W) \big]$.
	Then:
	\begin{enumerate}[label=\alph*)]
	\item
	$\displaystyle \forall x \in X : \quad   T\big( \Lift\mu, x \big) = \Lift \big[T( \mu, x )\big]$
	
	\item
	$\displaystyle \forall s \in SX : \quad  \exists\: \partial_s\mu   \;\implies\;   \exists \: \partial_s(\Lift\mu) = \Lift(\partial_s\mu)$
	\end{enumerate}
\end{proposition}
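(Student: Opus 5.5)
Part (a) is a direct comparison of the two measures on an arbitrary set. For $x \in X$, $A \in \A$ and $F \in L(\V)$ we have by definition
$$T(\Lift\mu, x)[A](F) \;=\; (\Lift\mu)(A+x)(F) \;=\; \mu(A+x)\circ F \;=\; \big(T(\mu,x)[A]\big)\circ F \;=\; \Lift\big[T(\mu,x)\big][A](F).$$
Since the two measures agree on every set and every operator $F$, they are equal. Here I only use that translation acts on the set argument while $\Lift$ acts on the values.

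For part (b) I argue by induction on the length $k$ of $s \in SX$. The case $k=0$ is trivial. The main observation is that $\Lift$ is not only a bounded linear map on measures but is induced pointwise by the bounded linear map
$$\ell: L(\V,\W)\to L\big[L(\V),L(\V,\W)\big], \qquad \ell(A)=[F\mapsto A\circ F].$$
This map is an isometry: $\|A\circ F\|\le\|A\|\|F\|$ gives one inequality, and $F=\mathrm{id}_\V$ gives equality. For a single vector $h$, assume $\partial_h\mu$ exists. Then for every $A\in\A$ the difference quotients $\big(\mu(A+th)-\mu(A)\big)/t$ converge in norm to $\partial_h\mu(A)$. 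Applying the continuous linear map $\ell$, we get
$$\frac{(\Lift\mu)(A+th)-(\Lift\mu)(A)}{t} \;=\; \ell\Big(\frac{\mu(A+th)-\mu(A)}{t}\Big) \;\longrightarrow\; \ell\big(\partial_h\mu(A)\big) \;=\; \Lift(\partial_h\mu)(A).$$
By the proposition on differentiability along a vector in $\Top_{set}$, this gives $\Lift\mu\in\mathcal{D}(\cdot,\Top_{set},h)$ with $\partial_h(\Lift\mu)=\Lift(\partial_h\mu)$. Countable additivity of $\Lift(\partial_h\mu)$ is automatic, because $\ell$ is bounded and $\partial_h\mu\in\CA$.

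For the inductive step, write $\partial_s=\partial_{s(1)}\partial_{s'}$ with $s'=(s(2),\dots,s(k))$. Existence of $\partial_s\mu$ means that $\partial_{s'}\mu$ exists and that $\partial_{s(1)}$ of it exists. By the induction hypothesis $\partial_{s'}(\Lift\mu)=\Lift(\partial_{s'}\mu)$. Applying the one-vector case to the measure $\partial_{s'}\mu$ then gives $\partial_{s(1)}\Lift(\partial_{s'}\mu)=\Lift(\partial_s\mu)$, which is the claim.

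The only step that needs care is fixing the convention for mixed derivatives: one must check that $\partial_s$ is defined by iterating one-vector Fomin derivatives in the order used in the mixed-derivative proposition. Beyond that, the whole argument reduces to continuity and linearity of $\ell$.
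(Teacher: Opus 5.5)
Your proof is correct and follows the paper's argument: part (a) is the same direct computation on each set $A$ and each operator $F$, and part (b) is the same one-vector difference-quotient computation followed by induction on the length of $s$. Pushing the difference quotient through the bounded linear isometry $\ell$ is a small improvement. It gives convergence in the operator norm of $L\big[L(\V),L(\V,\W)\big]$, which is what $\Top_{set}$-differentiability requires, whereas the paper's computation is written pointwise in $F$ and leaves the uniformity over $\|F\|\le 1$ implicit.
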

\begin{proof}
	Let $M := \Lift(\mu)$.
	For any $A\in\A$, \: $F \in L(\V)$ and $x,h\in X$ we have:
	$$T\big( \Lift\mu, x \big)(A)[F] = M_x(A)[F] = M(A+x)[F] = \mu(A+x) \circ F = \mu_x(A) \circ F = \Lift \big[T( \mu, x )\big](A)[F]$$
	This gives the first assertion.
	Now take $h\in X$:
	\begin{multline*}
	\partial_h(\Lift\mu)(A)[F] = \left(\lim_{n\to\infty} \frac{M_{th}-M}{t}\right)(A)[F] =   \lim_{n\to\infty} \frac{M(A+th)[F]-M(A)[ F]}{t}  \\
	= \lim_{n\to\infty} \frac{\mu(A+th) \circ F-M(A) \circ F}{t} = \partial_h\mu(A) \circ F = \Lift\big[\partial_h\mu\big](A)[F]
	\end{multline*}
	The second assertion follows by induction.
\end{proof}

Now we pass to the definition of continuity ($k=0$) and smoothness.

\begin{definition}[Continuity and smoothness of a measure]
	Let $\Top \in \big\{\Top_{set}, \Top_{sv}, \Top_{ssv}, \Top_{v} \big\}$ and $k \geq 0$.
	\begin{enumerate}[label=\alph*)]
	\item
	Measure $\mu \in \M$ is called $k$ times continuously differentiable in topology $\Top$, if $T_\mu \in  C^k\big[X, (\M, \Top)\big]$. 
	The space of such measures is denoted by $C^k(Y, \Top)$.
	
	\item
	Analagously, measure $\mu \in \M$ is called $k$ times continuously differentiable along vector $h \in X$ in topology $\Top$, if $\forall x: [t \mapsto T_\mu(x+th)] \in C^k\big[\mathbb{R}, (\M, \Top)\big]$.
	The space of such measures is denoted by $C^k(Y, \Top, h)$.
	\end{enumerate}
\end{definition}

	When a measure $\mu$ belongs to the class $C^k, \: k \geq 2$ at least in topology $\Top_{set}$, by Schwartz Theorem \autocite[4.5.14]{Bogachev_Smolyanov_Sobolev}, all differentials of $T_\mu$ of order up to and including $k$ are symmetric multilinear operators.
	Consequently, the mixed Fomin derivatives do not depend on the order of differentiation, and we can speak of derivatives along multivectors instead of sequences of vectors.

\subsection{Results related to continuity}

\begin{theorem}[Continuity criterion in $\Top_{set}$] 
	Suppose $\mu \in \CA(\A,Y)$ \:and\: $(e_1 \cdots e_d)$ is a basis in $X$.\\
	Then the following conditions are equivalent:
	\begin{enumerate}[label=\alph*)]
	
	\begin{multicols}{3}
	\item
	$ \mu \in \bigcap_{i=1}^{d} C(Y, \Top_{set}, e_i)$
	
	\item
	$\displaystyle \mu << Leb_X$
	
	\item
	$\displaystyle \mu \in C(Y, \Top_{set})$
	
	\end{multicols}
	\end{enumerate}
\end{theorem}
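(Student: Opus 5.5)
We will prove the cycle (c) $\Rightarrow$ (a) $\Rightarrow$ (b) $\Rightarrow$ (c). The step (c) $\Rightarrow$ (a) is immediate. If $T_\mu : X \to (\M, \Top_{set})$ is continuous, then so is its restriction to every line $t \mapsto x + te_i$.

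\textbf{(b) $\Rightarrow$ (c).} By Theorem \ref{7}, condition (b) gives the $\varepsilon$-$\delta$ form of absolute continuity. Since $\overset{\smile}{\mu} << \mu$ by Proposition \ref{36}, it holds also for $\overset{\smile}{\mu}$: for every $\varepsilon > 0$ there is $\delta > 0$ with $Leb(B) < \delta \Rightarrow \overset{\smile}{\mu}(B) < \varepsilon$. Fix $A \in \A$ and $\varepsilon > 0$.
\begin{enumerate}[label=\arabic*)]
\item By continuity of $\overset{\smile}{\mu}$ (Proposition \ref{36}), choose $R$ with $\overset{\smile}{\mu}(X \setminus B_{R-1}) < \varepsilon$, where $B_r$ is the ball of radius $r$ centred at $0$.
\item Put $A_R := A \cap B_R$. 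For $|x| < 1$, both $A \setminus A_R$ and $(A \setminus A_R) + x$ lie in $X \setminus B_{R-1}$.
\item Hence
$$\|\mu(A+x) - \mu(A)\| \leq \|\mu(A_R + x) - \mu(A_R)\| + 2\varepsilon \leq 2\,\overset{\smile}{\mu}\big((A_R + x) \,\triangle\, A_R\big) + 2\varepsilon .$$
\item Since $Leb(A_R) < \infty$, continuity of translation in $L_1(Leb)$ gives $Leb\big((A_R + x) \triangle A_R\big) \to 0$ as $x \to 0$, so the middle term tends to zero.
\end{enumerate}
This shows $T_{\mu,A}$ is continuous at $0$, hence everywhere by translation. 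That is (c).

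\textbf{(a) $\Rightarrow$ (b).} By Theorem \ref{7} it suffices to show $Leb(A) = 0 \Rightarrow \mu(A) = 0$. By Hahn–Banach it is enough to show $\nu(A) = 0$ for each scalar measure $\nu = y^*\mu$, $y^* \in Y^*$. The measure $\nu$ is countably additive, and every map $t \mapsto \nu(A + x + te_i)$ is continuous by (a).

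Fix a Lebesgue-null $A$ and set $\varphi(x) := \nu(A + x)$. This function is separately continuous in the coordinates relative to $(e_1 \cdots e_d)$, hence Borel.
\begin{enumerate}[label=\arabic*)]
\item For any box $Q$, Fubini (applied to $|\nu|$ and $Leb$) gives
$$\int_Q \varphi(x)\diff{x} = \int_X Leb\big(\{x \in Q : y \in A + x\}\big)\, \nu(\diff{y}) = 0 ,$$
because $\{x \in Q : y \in A + x\} = Q \cap (y - A)$ is Lebesgue-null.
\item Therefore $\varphi = 0$ Lebesgue-a.e.
\item Now peel off coordinates one at a time. By Fubini, for a.e. $(x_1 \cdots x_{d-1})$ the function $x_d \mapsto \varphi$ vanishes a.e., hence everywhere by continuity along $e_d$.
\item So for every fixed $x_d$, $\varphi = 0$ for a.e. $(x_1 \cdots x_{d-1})$. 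Repeat the argument with $e_{d-1}$, and so on down to $e_1$.
\end{enumerate}
After $d$ steps $\varphi \equiv 0$, in particular $\nu(A) = \varphi(0) = 0$.

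\textbf{Main obstacle.} The delicate point is (a) $\Rightarrow$ (b). Only separate continuity along the basis directions is available, with no joint continuity. This is why the argument must go through the a.e.-vanishing of $\varphi$ obtained from Fubini, followed by the coordinate-wise bootstrapping. The measurability of $\varphi$, needed to apply Fubini, is justified by the standard fact that separately continuous functions on $\mathbb{R}^d$ are Borel.
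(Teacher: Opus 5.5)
Your proof is correct. For (c)$\Rightarrow$(a) and (b)$\Rightarrow$(c) you follow essentially the paper's route. You cut off to a bounded region using continuity of a submeasure comparable to $\widetilde{\mu}$, and bound $\|\mu(A+x)-\mu(A)\|$ by that submeasure of a symmetric difference. You then finish with $L_1$-continuity of translations and the $\varepsilon$--$\delta$ form of $\mu << Leb_X$. Your cut-off via $\overset{\smile}{\mu}(X\setminus B_{R-1})<\varepsilon$ is in fact stated more carefully than the paper's ``$\widetilde{\mu}(X)\leq\widetilde{\mu}(B)+\varepsilon$''. For a non-additive submeasure, that inequality does not by itself give the tail bound the paper then uses.

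The real difference is in (a)$\Rightarrow$(b). The paper, like you, passes to scalar measures $y^*\mu$, but then simply cites Bogachev's result that a real measure on $\mathbb{R}^d$ continuous along a basis is absolutely continuous. You instead prove this scalar fact directly. By Fubini, $\int_E\nu(A+x)\diff{x}=\int_X Leb_X\big(E\cap(y-A)\big)\,\nu(\diff{y})=0$ for a Lebesgue-null $A$. You then upgrade ``$x\mapsto\nu(A+x)$ vanishes a.e.'' to ``vanishes everywhere'' by peeling off the coordinates using separate continuity along $e_d,\dots,e_1$. The paper's version is shorter and fits its programme of transferring the real-valued theory through functionals. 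Yours is self-contained and elementary: it uses only Fubini, translation invariance of $Leb_X$, and exactly the setwise continuity along the basis that (a) provides.

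One minor simplification: you need not appeal to the theorem that separately continuous functions are Borel. Fubini--Tonelli, applied to the Borel function $(x,y)\mapsto\chi_A(y-x)$ and to the positive and negative parts of the real and imaginary parts of $\nu$, already shows that $x\mapsto\nu(A+x)$ is Borel.
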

\begin{proof}
	Obviously full continuity implies continuity along a basis.
	Suppose $\mu$ is continuous in $\Top_{set}$ along each basis vector.
	Consider a real-valued bounded linear functional $y^* \in Y^*_\mathbb{R}$.
	Then $y^*\mu$ is a real-valued countably additive measure, continuous along a basis.
	According to \autocite[Proposition 3.4.3]{Bogachev_DM} it is absolutely continuous with respect to Lebesgue measure.
	But then 
	$$\forall A \in \A:   \quad      Leb_X(A) = 0 \:\implies\: \|\mu(A)\| = \sup \big\{ |y^*\mu(A)| : y^* \in Y^*_{\mathbb{R}}, \: \|y^*\| \leq 1\big\} \:=\: 0$$
	Theorem \ref{7} yields $\mu << Leb_X$ and with Proposition \ref{36} we obtain $\tilde{\mu} << Leb_X$.

	Now assume $\tilde{\mu} << Leb_X$ and fix $\varepsilon > 0$.
	Since the Borel $\sigma$-algebra $\A$ on $\mathbb{R}^d$ is generated by the $\delta$-ring of bounded Borel sets, according to a statement in \autocite[p. 56]{Dinculeanu_VM},
	there exists a bounded Borel set $B \in \A$ such that $\tilde{\mu}(X) \leq \tilde{\mu}(B) + \varepsilon$.
	Then for any $A \in \A$ and $x\in X$:
	\begin{align*}
		\|T_\mu(x+h)[A] - T_\mu(x)[A]\|_Y \:&=\: \|\mu(A+x+h) - \mu(A+x)\|   \:\leq\:   \tilde{\mu}\Big( [A+x+h] \triangle [A+x] \Big) \\
		&\leq\:  \tilde{\mu}\Big( \big[[A+x+h] \triangle [A+x]\big] \cap B \Big) + \varepsilon
	\end{align*}
	Now we utilize the well-known fact that for $f \in L_1 \big( Leb_X, \mathbb{C}\big)$ the mapping $h \mapsto f(\cdot+h)$ is continuous in $L_1$-seminorm.
	Here the set $B$ compensates for the possibility that $\chi_A$ may not lie in $L_1$.
	\begin{align*}
		\tilde{\mu}\Big( \big[[A+x+h] &\triangle [A+x]\big] \cap B \Big)  \:<<\:    Leb_X \Big( \big[[A+x+h] \triangle [A+x]\big] \cap B \Big)  \:=\\
		&\int_X \big| \chi_{A+x}(r-h) - \chi_{A+x}(r)\big| \chi_B(r)\diff{r}  \:\leq \\
		&\int_X \big| \chi_{(A+x)\cap B}(r-h) - \chi_{(A+x)\cap B}(r) \big| \diff{r}   \:+\:   \int_X \big|  \chi_B(r-h) - \chi_B(r)\big| \diff{r} \:\xrightarrow{h\to0}\: 0
	\end{align*}
	This establishes continuity of $\mu$ in $\Top_{set}$.
\end{proof}

\begin{theorem}[Continuity criterion in $\Top_{sv}$] \label{37}
	Suppose $\mu \in \CA\big[\A, \: Y \hookrightarrow L(\V,\W) \big]$ and $(e_1 \cdots e_d)$ is a basis in $X$.\\
	Then the following conditions are equivalent:
	\begin{enumerate}[label=\alph*)]
	\item
	$\displaystyle \forall \: 1\leq i \leq d: \; \|\mu_{te_i} - \mu\|_{sv} \xrightarrow{t\to0} 0$ \quad (continuity of $T_\mu$ at zero along the basis)
	
	\item
	$\displaystyle \mu \in C(Y, \Top_{sv})$ \quad (continuity of $T_\mu$ everywhere)
	\end{enumerate}
\end{theorem}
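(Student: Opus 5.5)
Only the implication (a)$\Rightarrow$(b) needs work, since continuity of $T_\mu$ everywhere in $\Top_{sv}$ clearly gives continuity at zero along each $e_i$. The plan rests on two elementary facts about the semivariation norm.

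\textbf{Translation invariance.} For any $\nu \in \M$ and $x \in X$ we have $\|\nu_x\|_{sv} = \|\nu\|_{sv}$. Indeed, $A \mapsto A+x$ is a bijection of $\A$ onto itself that preserves disjointness and maps $X$ onto $X$. Hence the supremum defining $\svar_{\V,\W}(\nu_x, X)$ runs over exactly the same sums $\big\| \sum_i \nu(A_i) v_i \big\|_\W$ as the one for $\nu$. Since $T$ is linear in the measure, i.e. $(\mu_a - \mu_b)_x = \mu_{a+x} - \mu_{b+x}$, this gives
$$\|\mu_{x+h} - \mu_x\|_{sv} = \|\mu_h - \mu\|_{sv}.$$
So continuity of $T_\mu$ at an arbitrary point $x$ reduces to continuity at zero.

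\textbf{Decomposition along the basis.} Write $h = \sum_{i=1}^d t_i e_i$ and telescope through the intermediate points $h_k := \sum_{i \leq k} t_i e_i$, with $h_0 = 0$:
$$\|\mu_h - \mu\|_{sv} \leq \sum_{k=1}^{d} \|\mu_{h_k} - \mu_{h_{k-1}}\|_{sv} = \sum_{k=1}^{d} \|\mu_{t_k e_k} - \mu\|_{sv}.$$
The equality applies translation invariance with $x = h_{k-1}$, since $\mu_{h_k} = (\mu_{t_k e_k})_{h_{k-1}}$ and $\mu_{h_{k-1}} = \mu_{h_{k-1}}$. As $h \to 0$, each coordinate $t_k$ (taken with respect to the basis) tends to $0$. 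By (a), every term on the right then tends to $0$, which yields continuity at zero and therefore, by the first step, everywhere.

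\textbf{Where care is needed.} The only point requiring checking is the identity $(\nu_x)_y = \nu_{x+y}$ together with the isometry of translation for $\|\cdot\|_{sv}$. Both follow directly from the definition of semivariation, so I expect no real obstacle. Note that no absolute continuity or integrability arguments, unlike in the $\Top_{set}$ case, are required.
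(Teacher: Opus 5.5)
Your proposal is correct and follows the paper's argument. Both telescope $h$ through the partial sums along the basis, use translation invariance of the total semivariation to reduce each increment to $\|\mu_{t_k e_k}-\mu\|_{sv}$, and then pass from continuity at zero to continuity everywhere by the same invariance. The only cosmetic difference is that you apply the triangle inequality for $\|\cdot\|_{sv}$ directly, valid even if the semivariation is infinite, whereas the paper carries it out inside the supremum over finite disjoint families.
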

\begin{proof}
	In one direction the implication is obvious.
	Suppose there is continuity at zero along the basis.
	Let $(e^1 \cdots e^d)$ be the dual basis. For a vector $h\in X$ we introduce the notation:
	$$h_1^{k} := \sum_{i=1}^k e^i(h)\cdot e_i  \:=\: \sum_{i=1}^k h^i \cdot e_i \quad\text{where}\quad 0 \leq k \leq d$$
	Take a disjoint family $\{A_i\}_1^m \in \mathrm{FD}(\A)$ and unit vectors $\{v_i\}_1^m$ from $\V$.
	Using translation invariance of total semivariation $\|\cdot\|_{sv}$ we obtain:
	\begin{multline*}
		\left\| \sum_{i=1}^{m} (\mu_h - \mu)[A_i]v_i \right\|        \:=\:       \left\| \sum_{i=1}^{m} \big[\mu(A_i + h) - \mu(A_i)\big]v_i \right\| \\
		=\: \left\| \sum_{i=1}^{m} \sum_{k=0}^{d-1} \big[\mu(A_i + h_{k+1}) - \mu(A_i + h_{k})\big]v_i \right\|     \;\leq\;   \sum_{k=0}^{d-1} \left\| \sum_{i=1}^{m} \big[ \mu(A_i + h_{k+1}) - \mu(A_i + h_{k})\big]v_i \right\| \\
		\leq\: \sum_{k=0}^{d-1} \big\| \mu_{h_{k+1}} - \mu_{h_k}\big\|_{sv}     \;=\;    \sum_{k=0}^{d-1} \big\| \mu_{h^{k+1} e_{k+1}} - \mu \big\|_{sv}
	\end{multline*}
	This yields continuity of $T_\mu$ at zero:
	$$ \big\| \mu_h - \mu \big\|_{sv}   \;\leq\;   \sum_{k=1}^{d} \big\| \mu_{h^{k} e_{k}} - \mu \big\|_{sv}   \;\xrightarrow{h\to0}\; 0$$
	Continuity at any other point follows again from translation invariance of total semivariation.
\end{proof}

\begin{theorem}[Continuity and operator-valued densities] \label{38}
	Suppose $\mu \in \CA\big[\A, \:L(\V,\W) \big]$ is regular with continuous semivariation $\widehat{\mu}$.\\
	Let $f \in \mathcal{L}_1(\mu, L(\V))$ \:and\: $\mathcal{H}$ be a countable filter base on  X, converging to zero.\\
	Then:\quad       $\displaystyle \big\| \mu_h - \mu \big\|_{sv} \xrightarrow{\mathcal{H}} 0    \;\implies\;       \big\| (\mu f)_h - \mu f \big\|_{sv} \xrightarrow{\mathcal{H}} 0$
\end{theorem}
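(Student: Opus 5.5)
The plan is to reduce to simple densities by an $L_1$-approximation, treat indicator densities directly, and push all regularity issues into a single estimate for $\widehat{\mu}(A \triangle (A+h))$.

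\textbf{Step 1: reduction to simple densities.} By Theorem \ref{10}, pick simple functions $s_n \in \Sim(\A, L(\V))$ with $s_n \to f$ and $|s_n| \leq |f|$ $\mu$-a.e. By the Dominated Convergence Theorem \ref{28} this gives $\|f - s_n\|_1 \to 0$. Combining Theorem \ref{21}(b) with Theorem \ref{30} (so that $\widehat{M}_1 = \widehat{\mu}_1$), the measure $\mu(f-s)$ satisfies $\|\mu f - \mu s\|_{sv} \leq \widehat{\mu}_1(f - s, X) = \|f - s\|_1$. Total semivariation is translation invariant, so the same bound holds for $\|(\mu f)_h - (\mu s)_h\|_{sv}$. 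A standard $\varepsilon/3$ argument then reduces the claim to simple $s$. By linearity it further reduces to $s = \chi_A F$ with $A \in \A$ and $F \in L(\V)$.

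\textbf{Step 2: the indicator case.} For $B \in \A$ we have
$(\mu \chi_A F)_h(B) = \mu\big((B+h)\cap A\big)F = \mu_h\big(B \cap (A-h)\big)F$.
I will compare this with $\mu_h(B\cap A)F$ and then with $\mu(B\cap A)F$. This gives
$$\|(\mu\chi_AF)_h - \mu\chi_AF\|_{sv} \leq \|F\|\Big( \widehat{\mu_h}\big((A-h)\triangle A\big) + \|\mu_h - \mu\|_{sv}\Big) = \|F\|\Big( \widehat{\mu}\big(A\triangle (A+h)\big) + \|\mu_h - \mu\|_{sv}\Big).$$
The second term tends to zero along $\mathcal{H}$ by hypothesis. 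It therefore remains to show $\widehat{\mu}(A \triangle (A+h)) \xrightarrow{\mathcal{H}} 0$.

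\textbf{Step 3: the main obstacle, smallness of $\widehat{\mu}(A\triangle(A+h))$.} This is where regularity and continuity of $\widehat{\mu}$ enter. First I need regularity with respect to $\widehat{\mu}$ rather than supremation. The approach is as follows. Continuity of $\widehat{\mu}$ together with Theorem \ref{25}, applied to the constant density (identity operator via the lifting $M$), gives a control measure $\lambda$ with $\lambda \leq \svar_{\F,\cdot}(\ldots) \leq \widehat{\mu} << \lambda$. The middle quantity is a scalar-type semivariation, dominated by $4\overset{\smile}{\mu}$ through Proposition \ref{36}. Hence small supremation forces small $\lambda$, and therefore small $\widehat{\mu}$. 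I expect this step to be the most delicate to justify carefully; if needed, I would instead argue via Theorem \ref{5} and uniform regularity of the family $\{w^*\mu\}$.

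Given $\varepsilon$, choose compact $K \subseteq A \subseteq U$ open with $\widehat{\mu}(U\setminus K) < \varepsilon$. Compactness gives $\delta = \mathrm{dist}(K, U^c) > 0$, so $K \pm h \subseteq U$ for $|h| < \delta$. Then
$$A \triangle (A+h) \subseteq (U\setminus K) \cup \big((U\setminus K)+h\big),$$
using $(K+h)\setminus K \subseteq U \setminus K$ and $K\setminus(K+h) = ((K-h)\setminus K)+h$. Finally, $\widehat{\mu}(E+h) = \widehat{\mu_h}(E) \leq \widehat{\mu}(E) + \|\mu_h - \mu\|_{sv}$. Hence
$$\widehat{\mu}\big(A\triangle(A+h)\big) \leq 2\varepsilon + \|\mu_h - \mu\|_{sv},$$
and the right-hand side is eventually below $3\varepsilon$ along $\mathcal{H}$. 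This completes the indicator case and hence the proof.

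The countability of $\mathcal{H}$ is used only to pass to sequences wherever convenience demands it.
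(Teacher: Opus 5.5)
Your argument is correct, and it takes a genuinely different route from the paper. The paper approximates $f$ in $L_1$ by a \emph{bounded continuous} function $g$, using the truncation $\chi_{[|f|\le C]}f$, Dobrakov's Lusin theorem and the Dugundji--Tietze extension. It then gets $\widehat{M}_1(g_h-g,X)\to 0$ from pointwise continuity of $g$ via dominated convergence along $\mathcal{H}$; this is the only place where countability of $\mathcal{H}$ is needed. The remaining term $(M_h-M)g_h$ is bounded by $C\|\mu_h-\mu\|_{sv}$.

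You instead approximate by simple functions along a genuine sequence and reduce to $\chi_A F$. You then handle translation continuity of indicators by the direct geometric estimate $\widehat{\mu}(A\triangle(A+h))\le 2\varepsilon+\|\mu_h-\mu\|_{sv}$, where the hypothesis itself controls $\widehat{\mu}$ on translated sets. This avoids Lusin and Tietze entirely. It also never passes to a limit along $\mathcal{H}$ inside an integral, so your proof shows that the countability assumption on $\mathcal{H}$ is superfluous: it works for any filter base converging to zero.

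The price is that you need compact/open regularity with respect to $\widehat{\mu}$ rather than $\overset{\smile}{\mu}$. Your derivation of this is valid: Theorem \ref{25}, applied to $\Lift(\mu)$ with the constant density $\mathrm{Id}_\V$, gives $\widehat{\mu}<<\lambda\le\widetilde{\mu}$, and Proposition \ref{36} gives $\widetilde{\mu}\le 4\overset{\smile}{\mu}$. This is the same fact the paper invokes in Lemma \ref{51} via Dobrakov's *-Theorem.

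One small citation fix in Step 1. Theorem \ref{21}(b) only bounds the \emph{scalar} semivariation of $\mu(f-s)$ as an $L(\V,\W)$-valued measure, not $\|\mu(f-s)\|_{sv}=\svar_{\V,\W}(\mu(f-s),X)$, which can be larger. The bound you need, $\|\mu g\|_{sv}\le\widehat{\mu}_1(g,X)$, follows from Theorem \ref{33}(b) with $\V'=\V$ and a constant unit-vector function. Alternatively, write $\sum_i(\mu g)(A_i)v_i=\int_X g\sigma\diff\mu$ with $\sigma=\sum_i\chi_{A_i}v_i$ (Lemma \ref{32}), and apply Theorem \ref{21}(a) using $|g\sigma|\le|g|$.
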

\begin{proof}
	Recall that by $\mu f$ we mean $Mf$ where $M := \Lift(\mu)$. 
	Note that $\forall h\in X:\:\big\| \mu_h - \mu \big\|_{sv} = \big\| M_h - M \big\|_{sv}$ by \ref{30} and \ref{35}.
	Modifying $f$ on a $\widehat{\mu}$-null set if necessary, we may assume that $f \in \SM\big[\A,L(\V)\big]$.\\
	Suppose $\big\| \mu_h - \mu \big\|_{sv} \xrightarrow{\mathcal{H}} 0$.
	Fix $\varepsilon > 0$.
	By continuity of $\widehat{\mu}_1(f,\cdot) = \widehat{M}_1(f,\cdot)$ there exists $C>0$ such that $\displaystyle \widehat{\mu}_1\big(f, [|f|>C]\big) < \varepsilon$.
	Define $\displaystyle f_\varepsilon := \chi_{[|f|\leq C]} \cdot f$.
	Since $\widehat{\mu}$ is continuous, an analog of Lusin's Theorem \autocite[p. 519]{Dobrakov_1} is valid for functions in $\SM\big(\A, \V \big)$, so we have:
	$$\exists \: K \in \mathcal{K}_X : \quad  \widehat{\mu}(X\setminus K) < \varepsilon/C, \quad f_\varepsilon\restriction_K \in C\big[K, L(\V)\big]$$
	A generalization of the Tietze Extension Theorem \autocite[Theorem 4.1]{Dugundji_ETT} gives the existence of a continuous extension:
	$$ \exists \: g \in BC\big[ X, L(\V) \big] : \quad g\restriction_K = f_\varepsilon\restriction_K,   \quad   \big\|g\big\|_X \leq \big\|f_\varepsilon\big\|_K \leq C$$
	Clearly, $\im g$ is separable as a continuous image of a separable space, and by Theorem \ref{9} we conclude that $g \in \BSM\big[\A, L(\V)\big]$.
	In turn $\BSM\big[\A, L(\V)\big] \subseteq \mathcal{L}_1(M)$ by continuity of the semivariation $\widehat{M}$ and Theorem \ref{22}.
	As $g$ is continuous we have $g_h \to g$ pointwise along the filter base $\mathcal{H}$, and $|g_h| \leq \|g\|_X \in \mathcal{L}_1\big(\mu, [0,\infty]\big)$.
	Now, since $\mathcal{H}$ is countable, we can pass to the sequential characterization of convergence and invoke Theorem \ref{28} to conclude that $\widehat{M}_1\big( g_h - g ,  X \big) \xrightarrow{\mathcal{H}} 0$.
	Next, using translation invariance of total semivariation, we obtain:
	\begin{align*}
		&\big\| (\mu f)_h - \mu f \big\|_{sv}  :=  \big\| (Mf)_h - Mf \big\|_{sv} \leq \\
		&\big\| (Mf)_h - (Mf_\varepsilon)_h \big\|_{sv}      +      \big\| (Mf_\varepsilon)_h - (Mg)_h \big\|_{sv}         +       \big\| (Mg)_h - Mg_h \big\|_{sv}    + \\
		&+   \big\| Mg_h - Mg \big\|_{sv}    +   \big\| Mg - Mf_\varepsilon \big\|_{sv}      +      \big\| Mf_\varepsilon - Mf \big\|_{sv} = \\
		&2\big\| M(f - f_\varepsilon) \big\|_{sv}      +       2\big\| M(g - f_\varepsilon) \big\|_{sv}      +      \big\| M(g_h - g) \big\|_{sv}    +   \big\| (M_h - M) g_h \big\|_{sv} \leq \\
		&2 \widehat{M}_1\big( f , [|f|>C] \big)      \:+\:      2 \widehat{M}_1\big( g - f_\varepsilon , X\setminus K \big)      \:+\:       \widehat{M}_1\big( g_h - g ,  X \big)      \:+\:    C \big\| M_h - M \big\|_{sv} \:\leq \\
		&2\varepsilon + 4C\varepsilon/C +  \widehat{M}_1\big( g_h - g ,  X \big)      \:+\:    C \big\| M_h - M \big\|_{sv} \quad\xrightarrow{\mathcal{H}}\quad 6\varepsilon
	\end{align*}
	Since $\varepsilon$ is arbitrary, the proof is complete.
\end{proof}

\subsection{Results related to differentiability}
First we transfer several important facts known for real-valued measures to the case of a countably additive vector measure, working with its scalar semivariation (which is always continuous and finite).
\begin{theorem} \label{39}
	Suppose $\mu \in \mathcal{D}(Y, \Top_{set}, h)$. Then:
	\begin{enumerate}[label=\alph*)]
	\item
	$\displaystyle \forall A\in\A:\quad \big[\mathbb{R} \ni r \mapsto \mu(A+rh)\big]$ \; is Lipschitz continuous
	
	\item
	$\displaystyle \forall A\in\A:\quad \big[\mathbb{R} \ni r \mapsto \partial_h\mu(A+rh)\big]$ \; is locally Bochner integrable
	
	\item
	$\displaystyle \forall A\in\A:\quad \mu(A+th) - \mu(A) = \int_{0}^{t} \partial_h\mu(A+rh) \diff{r}$ \;(oriented integral)
	
	\item
	$\displaystyle \partial_h\mu \:<<\: \widetilde{\mu}$. \: If $\mu$ is regular, then so is $\partial_h\mu$
	\end{enumerate}
\end{theorem}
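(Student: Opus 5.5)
The plan is to reduce everything to the real-valued theory via functionals, and use the Vitali–Hahn–Saks–Nikodym theorem together with uniform boundedness for the vector statements.

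\textbf{Step 1 (pointwise differentiability).} Fix $A\in\A$ and put $\varphi_A(r):=\mu(A+rh)$. Differentiability of $T_\mu$ along $h$ in $\Top_{set}$ at the point $rh$ gives that $\varphi_A$ is differentiable everywhere, with $\varphi_A'(r)=\partial_h\mu(A+rh)$.

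\textbf{Step 2 (uniform bound, proving (a)).} I would show that the difference quotients $\{(\mu_{th}-\mu)/t : 0<|t|\leq 1\}$ are bounded in the norm $\sup_A\|\cdot(A)\|$, i.e. in supremation $\overset{\smile}{\cdot}$.

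First work with a functional $y^*\in Y^*$. The measure $y^*\mu$ is a real (or complex) measure that is Fomin differentiable along $h$. By the classical theory (\autocite{Bogachev_DM}, Proposition 3.3.x) one has $\|y^*\mu_{th}-y^*\mu\|_{v}\leq |t|\,\|\partial_h(y^*\mu)\|_v$. Moreover $\partial_h(y^*\mu)=y^*\partial_h\mu$, so
$$\|y^*\mu_{th}-y^*\mu\|_{v}\leq |t|\,\overline{y^*\partial_h\mu}(X)\leq 4|t|\,\|y^*\|\,\overset{\smile}{\partial_h\mu}(X).$$
The last quantity is finite by Proposition \ref{36}, since $\partial_h\mu\in\M$ by Theorem \ref{3}.

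Taking the supremum over $\|y^*\|\leq1$ gives
$$\|\mu(A+th)-\mu(A)\|\leq 4|t|\,\overset{\smile}{\partial_h\mu}(X).$$
Replacing $A$ by $A+sh$ yields the Lipschitz property (a), with a constant uniform in $A$.

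\textbf{Step 3 (integrability and Newton–Leibniz, proving (b) and (c)).} The map $r\mapsto\partial_h\mu(A+rh)=\varphi_A'(r)$ is a pointwise limit of the continuous maps $n(\varphi_A(r+1/n)-\varphi_A(r))$, which are continuous by (a). Hence it is strongly measurable: the range is separable because it lies in the closed span of the separable set $\varphi_A(\mathbb{R})$. It is also bounded by $\overset{\smile}{\partial_h\mu}(X)$. So it is locally Bochner integrable, which is (b).

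For (c), apply any $y^*$: the scalar Newton–Leibniz formula for the Lipschitz function $y^*\varphi_A$, whose derivative exists everywhere, gives
$$y^*\big(\mu(A+th)-\mu(A)\big)=\int_0^t y^*\partial_h\mu(A+rh)\diff r.$$
Since Bochner integrals commute with functionals and $Y^*$ separates points, (c) follows.

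\textbf{Step 4 (proving (d)).} If $\widetilde{\mu}(A)=0$, then every Borel subset $B\subseteq A$ satisfies $\mu(B)=0$. For each $y^*$, the measure $y^*\mu$ vanishes on all subsets of $A$, so by the classical result $y^*\partial_h\mu\ll|y^*\mu|$ (\autocite{Bogachev_DM}) we get $y^*\partial_h\mu(A)=0$. Hence $\partial_h\mu(A)=0$, and Theorem \ref{7} upgrades this to $\partial_h\mu\ll\widetilde{\mu}$.

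For regularity: given $A$ and $\varepsilon$, choose $\delta$ from absolute continuity, and compact $K$ and open $U$ with $K\subseteq A\subseteq U$ and $\overset{\smile}{\mu}(U\setminus K)<\delta/4$. Then $\widetilde{\mu}(U\setminus K)<\delta$ by Proposition \ref{36}, so $\overset{\smile}{\partial_h\mu}(U\setminus K)\leq\varepsilon$.

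\textbf{Main obstacle.} The expected difficulty is making Step 2 and the classical variation estimate rigorous in the complex case, and citing the precise real-valued results. If citing is unavailable, I would instead derive uniform boundedness directly. The family $\{(\mu_{th}-\mu)/t\}_{t}$ is setwise convergent along every sequence $t_n\to0$, so by Theorem \ref{3} and the Nikodym boundedness theorem it is uniformly bounded for small $|t|$. Local Lipschitz continuity then follows from the mean value inequality applied to $y^*\varphi_A$.
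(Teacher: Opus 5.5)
Your proof is correct, and for (b), (c) and (d) it follows the paper's argument. That means: strong measurability of $r\mapsto\partial_h\mu(A+rh)$ as a pointwise limit of difference quotients with separable range; the bound by $\overset{\smile}{\partial_h\mu}(X)$; scalar Newton--Leibniz plus separation of points by functionals; and the classical absolute continuity $\partial_h(y^*\mu)\ll|y^*\mu|$, upgraded by Theorem \ref{7}.

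The one genuine difference is in (a). The paper never passes through variation norms. Step 1 already gives that $\varphi_A$ is differentiable everywhere with $\varphi_A'(r)=\partial_h\mu(A+rh)$. Also, $\overset{\smile}{\partial_h\mu}(X)\le\widetilde{\partial_h\mu}(X)<\infty$ by Theorem \ref{3} and Proposition \ref{36}. So the mean value inequality for Banach-valued functions gives directly $\|\mu(A+th)-\mu(A+sh)\|\le|t-s|\,\widetilde{\partial_h\mu}(X)$.

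Your detour through each $y^*\mu$ and the classical estimate $\|\nu_{th}-\nu\|_v\le|t|\,\|\partial_h\nu\|_v$ is valid. However, it imports an extra result from the real-valued theory and forces you to handle complex scalars. You would need to split into real and imaginary parts, or use real functionals as the paper does in (d). It also gives a worse constant. The "main obstacle" you anticipate, and the Nikodym-boundedness fallback, disappear once you notice that the derivative bound is already available pointwise, so no uniform bound on the difference quotients is needed.

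In the regularity part of (d), your argument is slightly more careful than the paper's. Absolute continuity is taken with respect to $\widetilde{\mu}$, while regularity controls $\overset{\smile}{\mu}$, so the factor in $\widetilde{\mu}\le 4\overset{\smile}{\mu}$ has to be absorbed. You do this by requiring $\overset{\smile}{\mu}(U\setminus K)<\delta/4$. The paper takes $\overset{\smile}{\mu}(U\setminus K)<\delta$ and silently skips this factor.
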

\begin{proof}
	The mapping $f: \mathbb{R} \ni r \mapsto \mu(A+rh)$ is everywhere differentiable with derivative $f' : \mathbb{R} \ni r \mapsto \partial_h\mu(A+rh)$.
	The first assertion is proved by applying the Mean Value Theorem for normed vector spaces \autocite[Theorem 7.2-1]{Ciarlet_LNFA}:
	$$\big\| \mu(A+th) - \mu(A+sh) \big\|_Y \:\leq\: |t-s| \sup\left\{  \big\|\partial_h(A+rh)\big\|_y   \::\: r \in \overleftrightarrow{(t,s)} \right\} \:\leq\: |t-s|\big\|\partial_h\mu\big\|_{ssv}$$
	(where the notation $\overleftrightarrow{(t,s)}$ stands for the line segement).
	In turn this implies that $f \in \SM(\mathcal{B}_\mathbb{R}, Y)$ and is absolutely continuous.
	Since $f'(r) = \lim_{n\to\infty} (f(r+1/n) - f(r))/(1/n)$, the derivative is also strongly measurable by Theorem \ref{9}.
	Local Bochner integrability is ensured by boundedness: $\|f'(r)\| \leq \|\partial_h\mu\|_{ssv} < \infty$.
	
	To prove the third statement take a functional $y^*\in Y^*$.
	Then $g:= y^*\circ f$ is also Lipschitz (hence absolutely continuous) and everywhere differentiable with $g' = y^*\circ f'$.
	Applying the Newton-Leibniz formula for the classical Lebesgue integral \autocite[Theorem 3.35]{Folland_RA} we get:
	$$y^*\big[ f(t) - f(0)\big] \:=\: g(t)- g(0) \:=\: \int_{0}^{t} g'(r) \diff{r} \:=\: y^*\left[\int_{0}^{t} f'(r) \diff{r}\right]$$
	where the integral is understood in the oriented sense.
	Since $Y^*$ separates points on $Y$, the third assertion is proved.
	
	Now we turn to proving $\partial_h\mu << \widetilde{\mu}$.
	Let $\widetilde{\mu}(A) = 0$ and take a real-valued functional $y^* \in Y^*_{\mathbb{R}}$.
	By Theorem \ref{5} we have $\overline{y^*\mu}(A) = 0$.
	From bounded linearity of $y^*$ it is obvious that $y^*\partial_h{\mu} = \partial_hy^*\mu$.
	Invoking a known result about real-valued measures differentiable along a vector \autocite[Corollary 3.3.2]{Bogachev_DM}, we obtain $\partial_hy^*\mu << \overline{y^*\mu}$.
	Consequently, $y^*\partial_h{\mu}(A) = y^*\big[\partial_h\mu(A)\big] = 0$.
	Since $Y^*_{\mathbb{R}}$ is norming for $Y$, we conclude that $\big\|\partial_h\mu(A)\big\|_Y = 0$.
	Absolute continuity follows from Theorem \ref{7}.
	
	Lastly, suppose $\mu$ is regular.
	Take $A\in\A$ and $\varepsilon>0$.
	By absolute continuity with respect to $\widetilde{\mu}$:
	$$ \exists \delta>0: \quad \widetilde{\mu}(B)<\delta \implies \overset{\smile}{\partial_h\mu}(B) < \varepsilon$$
	By definition of regularity:
	$$ \exists \: K \in \mathcal{K}_X,\: U \in \Top_X:  \quad  K\subseteq A \subseteq U \; , \; \overset{\smile}{\mu}(U\setminus K) < \delta$$
	Hence \: $\overset{\smile}{\partial_h\mu}(U\setminus K) < \varepsilon$ and we are done.
\end{proof}

Taking into consideration the isometry $Y \hookrightarrow L(\V,\W)$,  we can obtain further results interpreting the measure as operator-valued.
In such context we have two semivariations $\widetilde{\mu}$ and $\widehat{\mu}$.

\begin{theorem} \label{40}
	Suppose $\mu \in \mathcal{D}\big[ Y \hookrightarrow L(\V,\W), \Top_{set}, h \big]$. Then:
	\begin{enumerate}[leftmargin=*, label=\alph*)]
		\item
		$\displaystyle \forall \gamma \in \CA\big[\A, L(\V,\W)\big]: \; \svar_{\V,\W}\big[ \mu_{th} - \mu - t\gamma, E \big] \:\leq\: |t| \cdot \sup\left\{  \svar_{\V,\W}\big[ (\partial_h\mu)_{rh} - \gamma, E \big]  \::\:  r \in \overleftrightarrow{(0,t)} \right\}$
	
		\item \label{41}
		$\displaystyle \big\| (\partial_h\mu)_{th} - \partial_h\mu \big\|_{sv} \xrightarrow{t\to0} 0    \quad\implies\quad    \big\| (\mu_{th} - \mu)/t - \partial_h\mu \big\|_{sv} \xrightarrow{t\to0} 0$	
	\end{enumerate}
	If in addition to assumptions above $\widehat{\mu}$ is continuous on $\A$:
	\begin{enumerate}[resume, leftmargin=*, label=\alph*)]
	\begin{multicols}{3}
		\item
		$\widehat{\partial_h\mu}$ is continuous on $\A$ 

		\item
		$\displaystyle \big\| \mu_{th} - \mu \big\|_{sv} \xrightarrow{t\to0} 0$
	
		\item
		$\widehat{\partial_h\mu} \:<<\: \widehat{\mu}$
	\end{multicols}	
	\end{enumerate}
	If moreover $\mu$ is regular and $\partial_h\mu$ has an operator-valued Dobrakov $L_1$-density with respect to $\mu$:
	\begin{enumerate}[resume, leftmargin=*, label=\alph*)]
		\item
		$\displaystyle \big\| (\partial_h\mu)_{th} - \partial_h\mu \big\|_{sv} \xrightarrow{t\to0} 0$
	\end{enumerate}
\end{theorem}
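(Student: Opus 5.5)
The plan is to prove (a) directly with a mean value argument, and then derive the remaining items from (a), the earlier Theorems \ref{6}, \ref{39} and \ref{38}, and translation invariance of total semivariation.

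\textbf{Parts (a) and (b).} For (a), fix a family $\{A_i\}_1^m \in \mathrm{FD}(\A)$ with $\bigsqcup A_i \subseteq E$ and unit vectors $v_i \in \V$, and consider
$$\varphi : \mathbb{R} \ni r \mapsto \sum_{i=1}^m \big[\mu(A_i+rh) - \mu(A_i) - r\gamma(A_i)\big]v_i \in \W .$$
Since $\mu \in \mathcal{D}(Y,\Top_{set},h)$, the map $\varphi$ is everywhere differentiable, with
$$\varphi'(r) = \sum_{i=1}^m \big[(\partial_h\mu)_{rh}(A_i) - \gamma(A_i)\big]v_i .$$
By the definition of semivariation, $\|\varphi'(r)\|_\W \leq \svar_{\V,\W}\big[(\partial_h\mu)_{rh}-\gamma, E\big]$. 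The Mean Value Theorem for normed spaces (as in the proof of Theorem \ref{39}) together with $\varphi(0)=0$ gives
$$\|\varphi(t)\| \leq |t|\sup_{r\in\overleftrightarrow{(0,t)}}\|\varphi'(r)\| .$$
Taking the supremum over families and vectors yields (a). Part (b) follows by setting $\gamma=\partial_h\mu$ and $E=X$, dividing by $|t|$, and noting that the supremum over $r\in\overleftrightarrow{(0,t)}$ tends to $0$ by the hypothesis of (b).

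\textbf{Parts (c), (d) and (e).} Now assume $\widehat{\mu}$ is continuous.
\begin{enumerate}[label=(\roman*)]
\item For (c): since $\svar(\mu_x,A)=\svar(\mu,A+x)$ and $A_n\searrow\varnothing$ implies $A_n+x\searrow\varnothing$, each translate $\mu_x$ has continuous semivariation. Hence so does each measure $\mu_n := n(\mu_{h/n}-\mu)$. These converge setwise to $\partial_h\mu$, so Theorem \ref{6} gives continuity of $\widehat{\partial_h\mu}$.
\item For (d): continuity implies finiteness, so $\|\partial_h\mu\|_{sv}<\infty$. Apply (a) with $\gamma=0$ and $E=X$, using translation invariance of $\|\cdot\|_{sv}$. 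This gives $\|\mu_{th}-\mu\|_{sv}\leq |t|\,\|\partial_h\mu\|_{sv}\to 0$.
\item For (e): by Theorem \ref{39}(d), $\partial_h\mu<<\widetilde{\mu}$, and $\widetilde{\mu},\widehat{\mu}$ have the same null sets. So $\widehat{\mu}(A)=0$ forces $\partial_h\mu$ to vanish on all subsets of $A$, hence $\widehat{\partial_h\mu}(A)=0$.

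Since Theorem \ref{7} is stated only for measures, I upgrade this to an $\varepsilon$-$\delta$ statement by contradiction. Suppose there are $\varepsilon>0$ and sets $A_n$ with $\widehat{\mu}(A_n)<2^{-n}$ but $\widehat{\partial_h\mu}(A_n)\geq\varepsilon$. Put $B_k=\bigcup_{n\geq k}A_n\searrow B$. By $\sigma$-subadditivity $\widehat{\mu}(B)=0$, hence $\widehat{\partial_h\mu}(B)=0$. Then continuity from (c) and subadditivity give
$$\varepsilon\leq\widehat{\partial_h\mu}(A_k)\leq\widehat{\partial_h\mu}(B_k\setminus B)+\widehat{\partial_h\mu}(B)\to 0,$$
a contradiction.
\end{enumerate}

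\textbf{Part (f).} Write $\partial_h\mu=\mu f$ with $f\in\mathcal{L}_1(\mu,L(\V))$, understood via $\Lift$ as in subsection \ref{29}. Let $\mathcal{H}$ be the countable filter base of sets $\{th : 0<|t|<1/n\}$ on $X$. By (d), $\|\mu_{th}-\mu\|_{sv}\xrightarrow{\mathcal{H}}0$. Theorem \ref{38} applies, since $\mu$ is regular with continuous $\widehat{\mu}$, and gives $\|(\mu f)_{th}-\mu f\|_{sv}\to 0$.

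The main obstacle is bookkeeping rather than substance. In (a), one must check that the translated derivative $(\partial_h\mu)_{rh}$ evaluated on the $A_i$ really is the derivative of $\varphi$ at $r$; this uses the translation formula for Fomin derivatives in $\Top_{set}$. In (e), one must avoid applying Theorem \ref{7} to a submeasure, which the contradiction argument above handles.
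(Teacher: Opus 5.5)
Your proof is correct. Parts (b), (c), (d) and (f) coincide with the paper's argument. You add two useful details: that translates and differences of measures with continuous semivariation again have continuous semivariation, and an explicit countable filter base for Theorem~\ref{38}. The genuine differences are in (a) and (e).

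In (a), the paper writes $\mu(A_i+th)-\mu(A_i)$ as the Bochner integral $\int_0^t\partial_h\mu(A_i+rh)\,dr$ from Theorem~\ref{39} and bounds the norm of that integral. You instead apply the Mean Value Theorem directly to the $\W$-valued map $\varphi$. Your route needs only pointwise differentiability along $h$ and sidesteps the integrability part of Theorem~\ref{39]}; the paper's route reuses a representation it needs anyway for Theorem~\ref{49}.

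In (e), the paper cites Dobrakov's $*$-Theorem (continuity of $\widehat{\nu}$ forces $\widehat{\nu}\ll\widetilde{\nu}$) and chains $\widehat{\partial_h\mu}\ll\widetilde{\partial_h\mu}\ll\widetilde{\mu}\le\widehat{\mu}$. You instead get the null-set implication from Theorem~\ref{39}(d). You then upgrade it to the $\varepsilon$-$\delta$ form by a Borel--Cantelli type contradiction argument. That argument uses only continuity of $\widehat{\partial_h\mu}$ from (c) and countable subadditivity of $\widehat{\mu}$.

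Your version is self-contained and in effect proves the submeasure analogue of Theorem~\ref{7}. You are right that Theorem~\ref{7} cannot be applied to $\widehat{\partial_h\mu}$ as stated. The paper's version is shorter given the citation, and it yields the stronger intermediate fact $\widehat{\partial_h\mu}\ll\widetilde{\partial_h\mu}$. The same cited theorem is used again later in Lemma~\ref{51}.
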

\begin{proof}
	\vspace{-20pt}
	\begin{enumerate}[leftmargin=*, label=\alph*)]
		\item
		For $E\subseteq X$ take a disjoint family $\{A_i\}_1^m \in \mathrm{FD}(\A)$ contained in $E$ and some unit vectors $\{v_i\}_1^m$ from $\V$.
		Using the integral representation from \ref{39}, we get:
		\begin{align*}
			&\left\|  \sum_{i=1}^{m} (\mu_{th} - \mu - t\gamma)[A_i]v_i \right\|_\W \;=\; 
			\left\|  \sum_{i=1}^{m} \int_0^t  \Big[  \partial_h\mu(A_i + rh) v_i - \gamma(A_i) v_i \Big] \diff{r} \right\|_\W      \;= \\
			&\left\| \int_0^t \left[ \sum_{i=1}^{m}   \big(  (\partial_h\mu)_{rh} - \gamma \big) [A_i] v_i  \right] \diff{r} \right\|_\W   \;\leq\;   
			\int_0^t \left\|  \sum_{i=1}^{m}   \big(  (\partial_h\mu)_{rh} - \gamma \big) [A_i] v_i  \right\|_\W  \diff{r}    \;\leq \\
			&|t| \cdot \sup \left\{   \left\|  \sum_{i=1}^{m}   \big(  (\partial_h\mu)_{rh} - \gamma \big) [A_i] v_i  \right\|_\W  \::\:  r \in \overleftrightarrow{(0,t)}  \right\}    \;\leq \\
			&|t| \cdot \sup\left\{  \svar_{\V,\W}\big[ (\partial_h\mu)_{rh} - \gamma, E \big]  \::\:  r \in \overleftrightarrow{(0,t)} \right\}
		\end{align*}
		
		\item
		If $\displaystyle \big\| (\partial_h\mu)_{th} - \partial_h\mu \big\|_{sv} \xrightarrow{t\to0} 0$   \;\:then also\;\:   $\displaystyle \sup\left\{ \big\| (\partial_h\mu)_{rh} - \partial_h\mu \big\|_{sv}  \::\:  r \in \overleftrightarrow{(0,t)} \right\} \xrightarrow{t\to0} 0 $.\\
		Applying the first statement with $\gamma = \partial_h\mu$, we obtain:
		$$\big\| (\mu_{th} - \mu)/t - \partial_h\mu \big\|_{sv}      \;\leq\;     \sup\left\{ \big\| (\partial_h\mu)_{rh} - \partial_h\mu \big\|_{sv}  \::\:  r \in \overleftrightarrow{(0,t)} \right\}   \;\xrightarrow{t\to0} 0$$
		
		\item
		Since $\partial_h\mu$ is the setwise limit of the sequence $\big\{(\mu_{n^{-1}h} -\mu)/n^{-1}\big\}_{n=1}^\infty$, continuity of $\widehat{\partial_h\mu}$ follows from continuity of $\widehat{\mu}$ and \ref{6}.
		
		\item
		Since $\widehat{\partial_h\mu}$ is continuous, it is also finite, so $\widehat{\partial_h\mu}(X) < \infty$.
		Using the first assertion with $\gamma=0$ we get:
		$$\big\| \mu_{th} - \mu \big\|_{sv}   \;\leq\;   |t| \cdot \sup\left\{  \big\| (\partial_h\mu)_{rh} \big\|_{sv}  \::\:  r \in \overleftrightarrow{(0,t)} \right\}   \;\leq\;    |t| \cdot \widehat{\partial_h\mu}(X) \xrightarrow{t\to0} 0$$
		
		\item
		According to \autocite[*-Theorem, p. 515]{Dobrakov_1}, for a countably additive operator-valued measure on a $\sigma$-algebra continuity of semivariation implies absolute continuity of semivariation with respect to scalar semivariation.
		This fact together with Theorem \ref{39} gives \: $\widehat{\partial_h\mu} \:<<\: \widetilde{\partial_h\mu}  \:<<\: \widetilde{\mu}  \:\leq\: \widehat{\mu}$.
		
		\item
		The last assertion follows from Theorem \ref{38}.
	\end{enumerate}
\end{proof}

Given $\mu \in \mathcal{D}\big[ Y \hookrightarrow L(\V,\W), \Top_{set}, h \big]$, in accordance with the terminology adopted for the real-valued measures \autocite[Definition 3.3.8]{Bogachev_DM}, an operator-valued Dobrakov density (if it exists) of $\partial_h\mu$ with respect to $\mu$ will be called a logarithmic derivative of $\mu$ along $h$.

\begin{theorem}[Smoothness criterion in $\Top_{sv}$] \label{42}
	Suppose $\mu \in \CA\big[\A, \: Y \hookrightarrow L(\V,\W) \big]$ ,\: $k\in \mathbb{N}$ \:and\: $E = (e_1 \cdots e_d)$ is a basis in $X$.\\
	Let $S_0^kE$ be the set of finite sequences of basis vectors of length up to and including $k$.\\
	Then the following conditions are equivalent:
	\begin{enumerate}[label=\alph*)]
		\item \label{43}
		$\displaystyle \mu \in C^k\big[Y, \Top_{sv}\big]$
		
		\item \label{44}
		$\displaystyle \forall \: s \in S_0^kE, \: 1\leq i \leq d: \quad  \exists\; \partial_s\mu \quad\text{and}\quad  \; \big\|(\partial_s\mu)_{te_i} - \partial_s\mu\big\|_{sv} \xrightarrow{t\to0} 0$ 
	\end{enumerate}
\end{theorem}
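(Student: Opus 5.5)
The plan is to argue by induction on $k$. The base case $k=0$ is Theorem \ref{37}, since $S_0^0E$ contains only the empty sequence. For both directions I will use the same mechanism. For a measure $\nu$ (standing for some $\partial_s\mu$) that is differentiable along $e_i$ in $\Top_{set}$, with $(\partial_{e_i}\nu)_{te_i}\to\partial_{e_i}\nu$ in $\|\cdot\|_{sv}$, Theorem \ref{40}\ref{41} gives that the difference quotients $(\nu_{te_i}-\nu)/t$ converge to $\partial_{e_i}\nu$ in $\Top_{sv}$. In other words, $T_\nu$ has a partial derivative along $e_i$ at zero in $\Top_{sv}$. By translation invariance of $\|\cdot\|_{sv}$ (applied to the relation $T(\nu,x)=T_\nu(x)$), the same holds at every point, with $\partial_{e_i}T_\nu(x)=(\partial_{e_i}\nu)_x$.

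\textbf{Direction \ref{44} $\Rightarrow$ \ref{43}.} Let $s$ have length at most $k-1$. The measures $\partial_{e_i}\partial_s\mu$ exist and are $\Top_{sv}$-continuous along the basis, so by Theorem \ref{37} each $T_{\partial_{e_i}\partial_s\mu}$ is $\Top_{sv}$-continuous everywhere. The previous paragraph then shows that $T_{\partial_s\mu}$ has everywhere-defined partial derivatives $x\mapsto(\partial_{e_i}\partial_s\mu)_x$ in the normed space $(\M,\|\cdot\|_{sv})$, and these are continuous. A map from $\mathbb{R}^d$ into a normed space with continuous partial derivatives along a basis is continuously Fréchet differentiable. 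This follows from the standard mean value argument used in the proof of Theorem \ref{37}: telescope along $h_1^k$ and apply the Mean Value Theorem \autocite[Theorem 7.2-1]{Ciarlet_LNFA} on each segment. So $T_{\partial_s\mu}\in C^1$ with $\partial_{e_i}T_{\partial_s\mu}=T_{\partial_{e_i}\partial_s\mu}$. Iterating from the top order down, the partial derivatives of $T_\mu$ of order at most $k$ along basis sequences are $T_{\partial_s\mu}$, and they are all continuous. Because $d<\infty$, this is equivalent to $T_\mu\in C^k[X,(\M,\Top_{sv})]$: the $j$-th differential is determined by these partials, and the $C^1$ criterion applies at each level.

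\textbf{Direction \ref{43} $\Rightarrow$ \ref{44}.} Convergence in $\|\cdot\|_{sv}$ dominates setwise convergence, because $\|\nu(A)\|\le\widehat{\nu}(A)\le\|\nu\|_{sv}$. Hence $\Top_{sv}$-derivatives of $T_\mu$ are also $\Top_{set}$-derivatives. By the proposition on mixed derivatives in $\Top_{set}$, the measures $\partial_s\mu$ therefore exist and satisfy $\partial_sT_\mu(x)=(\partial_s\mu)_x$ for $|s|\le k$. Continuity of $\partial_sT_\mu$ in $\Top_{sv}$ at $0$ along $e_i$ is exactly $\|(\partial_s\mu)_{te_i}-\partial_s\mu\|_{sv}\to0$.

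The main obstacle is the bookkeeping in \ref{44} $\Rightarrow$ \ref{43}. One must identify the $\Top_{sv}$-partials with the Fomin derivatives at every point and not only at $0$, and then pass rigorously from continuous partials to $C^k$ Fréchet smoothness in a normed space that may fail to be complete. The mean value inequality does not need completeness, so I expect this step to go through.
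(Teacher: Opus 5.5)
Your proposal is correct and follows essentially the same route as the paper. The paper also proceeds by induction. It uses Theorem \ref{37} to upgrade basis-directional $\Top_{sv}$-continuity of each $\partial_s\mu$ to continuity of $T_{\partial_s\mu}$, and Theorem \ref{40}-\ref{41} together with translation invariance of $\|\cdot\|_{sv}$ to get the basis partials $\partial_{e_i}T_{\partial_s\mu}(x)=(\partial_{e_i}\partial_s\mu)_x$. It then invokes the standard fact that continuous partials of all orders up to $k$ give $C^k$, which it cites as a well-known fact rather than sketching via the mean value inequality as you do. Your justification of \ref{43}$\Rightarrow$\ref{44}, via $\|\nu(A)\|\le\|\nu\|_{sv}$ and the $\Top_{set}$ mixed-derivative proposition, spells out what the paper simply calls obvious.
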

\begin{proof}
	Obviously \ref{43} $\implies$ \ref{44}.
	We will prove the reverse implication by induction on $k$.
	Due to a well known fact from differential calculus in normed vector spaces \autocite[p. 85]{Zorich_2}, it suffices to show existence and continuity of all mixed basis derivatives of $T_\mu$ of order exactly $k$ in topology $\Top_{sv}$.
	
	Suppose $k=1$ and condition \ref{44} holds.
	By Theorem \ref{37} all the measures $\{\partial_i\mu \::\: 1\leq i \leq d\}$ are continuous in $\Top_{sv}$.
	Existence of partial derivatives along the basis is ensured by Theorem \ref{40}-\ref{41}:
	$$\left\| \frac{T_\mu(x+te_i) - T_\mu(x)}{t} - (\partial_i\mu)_{x} \right\|_{sv}    =     \left\| \frac{\mu_{x+te_i}-\mu_x}{t} - (\partial_i\mu)_x \right\|_{sv}    =     \left\| \frac{\mu_{te_i} - \mu}{t} - \partial_i\mu \right\|_{sv}   \xrightarrow{t\to0}0$$
	Their continuity is established as follows:
	$$\Big\| \partial_iT_\mu(x+h) - \partial_iT_\mu(x) \Big\|_{sv}     =      \Big\| (\partial_i\mu)_{x+h} - (\partial_i\mu)_{x} \Big\|_{sv}     =      \Big\| (\partial_i\mu)_{h} - \partial_i\mu \Big\|_{sv} \xrightarrow{t\to0} 0$$
	Thus $T_\mu \in C^1(X)$ and the claim is true for $k=1$.
	
	Now suppose the claim is true up to and including $k\geq1$, and condition \ref{44} holds for $k+1$.
	By the induction hypothesis $T_\mu \in C^k(X)$.
	Now we show that all its mixed basis derivatives of order $k+1$ in $\Top_{sv}$ exist and are continuous.
	Take $s \in S^kE$ (i.e. $|s|=k$).
	Then 
	$$\forall i,j       :\;        \big\| (\partial_i\partial_s\mu)_{te_j} - \partial_i\partial_s\mu \big\|_{sv}     =      \big\| (\partial_{si}\mu)_{te_j} - \partial_{si}\mu \big\|_{sv} \xrightarrow{t\to0} 0 $$
	and again by Theorem \ref{40}-\ref{41}:
	\begin{multline*}
		\left\| \frac{\partial_sT_\mu(x+te_i) - \partial_sT_\mu(x)}{t} - (\partial_{si}\mu)_{x} \right \|_{sv}      =
		\left\| \frac{(\partial_s\mu)_{x+te_i} - (\partial_s\mu)_{x}}{t} - (\partial_i\partial_s\mu)_{x}\right \|_{sv}    =\\
		\left\| \frac{(\partial_s\mu)_{te_i} - \partial_s\mu}{t} - \partial_i\partial_s\mu \right \|_{sv} \xrightarrow{t\to0} 0
	\end{multline*}
	Invoking once more Theorem \ref{37}, we obtain that all the measures $\{\partial_{si}\mu \::\: 1\leq i \leq d\}$ are continuous in $\Top_{sv}$.
	This translates into continuity of the partial derivatives:
		$$\left\| \partial_{si}T_\mu(x+h) - \partial_{si}T_\mu(x) \right \|_{sv}       =       \left\| (\partial_{si}\mu)_{x+h} - (\partial_{si}\mu)_{x} \right \|_{sv}       =      \left\| (\partial_{si}\mu)_{h} - (\partial_{si}\mu \right \|_{sv}   \xrightarrow{t\to0}  0 $$
\end{proof}

\begin{corollary}[A sufficient condition for smoothness] \label{45}
	Suppose $\mu \in \CA\big[\A, \: L(\V,\W) \big]$ is a regular measure with continuous semivariation $\widehat{\mu}$.
	If it has Dobrakov-$L_1$ logarithmic derivatives of order up to and including $k \geq 1$ along some basis, then $\mu \in C^k(\Top_{sv})$.
\end{corollary}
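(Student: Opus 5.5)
We verify condition \ref{44} of the Smoothness criterion \ref{42}; Theorem \ref{42} then gives $\mu \in C^k(\Top_{sv})$. The hypothesis is read as follows: for every $s \in S_0^{k-1}E$ and every basis vector $e_i$, the measure $\partial_s\mu$ is differentiable along $e_i$ in $\Top_{set}$, and $\partial_i\partial_s\mu = \mu g_{si}$ for some $g_{si} \in \mathcal{L}_1(\mu, L(\V))$. Consequently every $\partial_s\mu$ with $s \in S_0^kE$ exists, and for $|s|\geq 1$ it is of the form $\mu g_s$ with an $\mathcal{L}_1$ density, where $\mu g$ denotes $\Lift(\mu) g$ as in subsection \ref{29}. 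For $s$ the empty sequence, $\partial_s\mu = \mu$.

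It remains to prove, for every $s \in S_0^kE$ and every $i$, that $\|(\partial_s\mu)_{te_i} - \partial_s\mu\|_{sv} \to 0$ as $t \to 0$.

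\emph{Case $|s|=0$.} Since $\mu$ is differentiable along $e_i$ and $\widehat{\mu}$ is continuous, Theorem \ref{40}(d) gives $\|\mu_{te_i}-\mu\|_{sv}\to 0$ directly.

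\emph{Case $|s|\geq 1$.} Here $\partial_s\mu = \mu g_s$ with $g_s \in \mathcal{L}_1(\mu, L(\V))$. We apply Theorem \ref{38} with $f = g_s$ and the countable filter base $\mathcal{H}$ generated by the sets $\{te_i : 0<|t|<1/n\}$, $n \in \mathbb{N}$. Its hypotheses hold: $\mu$ is regular, $\widehat{\mu}$ is continuous, and $\|\mu_h - \mu\|_{sv} \xrightarrow{\mathcal{H}} 0$ by the previous case. Theorem \ref{38} then yields $\|(\mu g_s)_{te_i} - \mu g_s\|_{sv} \to 0$, which is the required continuity. Equivalently, this step is Theorem \ref{40}(f) applied to $\partial_{s'}\mu$, where $s = s'j$; but that route needs regularity and continuity of the semivariation of $\partial_{s'}\mu$ rather than of $\mu$, so going through Theorem \ref{38} with $\mu$ itself is cleaner.

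\emph{Main obstacle.} The delicate point is justifying the reading of the hypothesis, namely that higher-order logarithmic derivatives are densities of $\partial_s\mu$ with respect to $\mu$ itself, and that the iterated Fomin derivatives $\partial_s\mu$ actually exist for every sequence $s$ of length at most $k$. If the hypothesis is instead stated recursively, e.g. $\partial_i\mu = \mu g_i$ and then $\partial_j(\mu g_i)$ having a density with respect to $\mu$, the same argument works without change, since each $\partial_s\mu$ is still $\mu$ times an $\mathcal{L}_1$ density.

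If only $\partial_i\partial_s\mu$ having a density with respect to $\partial_s\mu$ were assumed, one would need the change of measure formula (Theorem \ref{33}) to rewrite these as densities with respect to $\mu$. The product of the densities must then be shown to lie in $\mathcal{L}_1(\mu)$, and I would expect this to require extra integrability. For that reason I adopt the reading fixed at the start.
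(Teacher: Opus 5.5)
Your proposal is correct and is the argument the paper leaves implicit (the corollary is stated without proof right after Theorem~\ref{42}). You verify condition (b) of Theorem~\ref{42} using Theorem~\ref{40}(d) for the empty sequence, and for $|s|\geq 1$ you apply Theorem~\ref{38} (the mechanism behind Theorem~\ref{40}(f)) to $\partial_s\mu=\mu\,\Theta_s$ with the filter base along each $e_i$. Your reading of the hypothesis matches the convention $\partial_\alpha\mu=\mu\,\Theta_\alpha$ used in Section~\ref{57}, and you are right that invoking Theorem~\ref{38} directly, rather than Theorem~\ref{40}(f) as stated, is what gives continuity along every basis direction and for higher-order densities taken with respect to $\mu$ itself.
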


\subsection{Leibniz rule and the integration by parts formula} \label{46}

The first part of this subsection is concerned with an analog of Leibniz formula when differentiating the product of a measure and an operator-valued multiplier.
These results are proved under the assumptions of continuous semivariation and Fomin differentiability in topology $\Top_{sv}$.

\begin{theorem}[First order Leibniz formula in $\Top_{sv}$] \label{47}
	Suppose $\mu \in  \mathcal{D} \big[L(\V, \W), \Top_{sv}, h \big]$ with continuous semivariation $\widehat{\mu}$.\\
	Also assume \: $\varphi ,\: \partial_h\varphi \:\in\: \BSM\big[\A, L(\V)\big]$. \\
	Then: \qquad
	$\displaystyle  \mu \varphi \:\in\: \mathcal{D}\big[L(\V,\W), \Top_{sv}, h\big] \quad\text{and}\quad  \partial_h(\mu \varphi) \:=\: \partial_h\mu \cdot \varphi   \:+\:   \mu \cdot \partial_h\varphi$
\end{theorem}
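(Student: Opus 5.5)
Since $\mu \in \mathcal{D}[L(\V,\W),\Top_{sv},h]$ forces everything else to be translation-equivariant, the plan is to show differentiability of $T_{\mu\varphi}$ at zero only, i.e. that
$$\Big\| \frac{(\mu\varphi)_{th} - \mu\varphi}{t} - \partial_h\mu\cdot\varphi - \mu\cdot\partial_h\varphi \Big\|_{sv} \xrightarrow{t\to0} 0,$$
and afterwards transport the result to an arbitrary point $x$ by the identity $(\mu\varphi)_x = \mu_x\varphi_x$ and translation invariance of $\|\cdot\|_{sv}$. All products are understood via $M=\Lift(\mu)$ as in subsection \ref{29}; bounded strongly measurable multipliers are Dobrakov integrable (indeed lie in $\mathcal{L}_1$), because $\widehat{\mu}$ and hence $\widehat{\partial_h\mu}$ (Theorem \ref{40}, continuity passes to the setwise limit by \ref{6}) are continuous and \ref{22} applies. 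I will also use the elementary bound $\|\nu\psi\|_{sv}\le \|\psi\|_X\,\|\nu\|_{sv}$ for $\psi\in\BSM$, a consequence of Theorem \ref{17}(c) together with Theorem \ref{30}.

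The first step is the change of variables identity $(\mu\varphi)_{th} = \mu_{th}\varphi_{th}$ with $\varphi_{th}(x)=\varphi(x+th)$. It is obvious for simple $\varphi$ and follows for bounded $\varphi$ by uniform approximation (Theorem \ref{9}(c)). Next I split
$$\frac{\mu_{th}\varphi_{th}-\mu\varphi}{t} = \frac{\mu_{th}-\mu}{t}\,\varphi_{th} + \mu\,\frac{\varphi_{th}-\varphi}{t}.$$
The first term becomes $\big(\frac{\mu_{th}-\mu}{t}-\partial_h\mu\big)\varphi_{th} + \partial_h\mu\,(\varphi_{th}-\varphi) + \partial_h\mu\,\varphi$. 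The first summand has semivariation at most $\|\varphi\|_X$ times the $\Top_{sv}$-difference quotient error, which tends to $0$ by hypothesis. The second summand is bounded by $\widehat{(\partial_h\mu)}_1(\varphi_{th}-\varphi,X)$, which tends to $0$ by dominated convergence (Theorem \ref{28}) applied along a sequence $t_n\to0$. Pointwise convergence $\varphi_{th}\to\varphi$ holds because $\partial_h\varphi$ exists everywhere, and the domination is by the constant $2\|\varphi\|_X\in\mathcal{L}_1$.

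For the second term, the mean value theorem gives $|(\varphi_{th}-\varphi)/t|\le \|\partial_h\varphi\|_X$, and the quotient converges pointwise to $\partial_h\varphi$. Theorem \ref{28} then yields $\widehat{M}_1\big((\varphi_{th}-\varphi)/t-\partial_h\varphi, X\big)\to0$, so this term tends to $\mu\,\partial_h\varphi$ in $\|\cdot\|_{sv}$. Since each piece is controlled by an $\widehat{M}_1$-seminorm, the argument stays in the Dobrakov framework without needing finite variation.

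I expect the main obstacle to be the rigorous justification of $(\mu\varphi)_{th}=\mu_{th}\varphi_{th}$ and of the estimates in operator-valued form via $\Lift$. Concretely, one must check that integrating against the translated measure is the same as integrating the translated function, in the sense of approximating sequences in Definition \ref{15}, and that $\varphi_{th}$ is still in $\BSM$. My first attempt will be uniform approximation by elementary functions, whose integrals converge uniformly in semivariation by Theorem \ref{17}(c). A secondary point is the measurability of $(\varphi_{th}-\varphi)/t$, which follows from Theorem \ref{9}. Finally, since the estimates hold uniformly at every base point, differentiability at every $x$ follows by translation.
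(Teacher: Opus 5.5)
Your proof is correct. It differs from the paper's only in how the product is split. The paper also relies on $(M\varphi)_{th}=M_{th}\varphi_{th}$, though it uses this identity without comment, and writes
\[
\frac{(M\varphi)_{th}-M\varphi}{t}=(M_{th}-M)\,\frac{\varphi_{th}-\varphi}{t}+M\,\frac{\varphi_{th}-\varphi}{t}+\frac{M_{th}-M}{t}\,\varphi .
\]
There the cross term is bounded by $\|\partial_h\varphi\|_X\,\|\mu_{th}-\mu\|_{sv}$ and vanishes by $\Top_{sv}$-continuity of translations. The difference-quotient error is paired with the fixed multiplier $\varphi$.

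You instead pair the error with the translated multiplier $\varphi_{th}$, and you put the cross term $\partial_h\mu\,(\varphi_{th}-\varphi)$ against the derivative measure. This requires $\partial_h\mu$ to be an admissible integrator for varying bounded multipliers, i.e.\ continuity of $\widehat{\partial_h\mu}$, which Theorem~\ref{40} provides. It costs nothing extra, because the term $\partial_h\mu\cdot\varphi$ in the conclusion already needs it. You also make explicit two things the paper leaves implicit: this continuity and the translation identity.

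Your cross term does not even need dominated convergence. The mean value theorem gives $|\varphi_{th}-\varphi|\le |t|\,\|\partial_h\varphi\|_X$, so the term is $O(|t|)$ in $\|\cdot\|_{sv}$.

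Two citation details need fixing:
\begin{itemize}
\item Theorem~\ref{9}(c) gives countably-valued (elementary) uniform approximants, not simple ones. So you should check the translation identity for elementary $\varphi$, using countable additivity of the integral measures, before passing to the uniform limit with Theorem~\ref{17}(c).
\item The bound $\|\nu\psi\|_{sv}\le\|\psi\|_X\|\nu\|_{sv}$ controls a semivariation, whereas Theorem~\ref{17}(c) only controls single values. It is more accurately Theorem~\ref{33}(b) with $|f|\equiv 1$, $p=1$, combined with Theorem~\ref{22}(f), which is how the paper implicitly justifies its own three estimates.
\end{itemize}
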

\begin{proof}
	Again recall that multiplication between $\CA\big[\A, L(\V,\W)\big]$ and $\BSM\big[\A, L(\V)\big]$ is defined via the map $\Lift$.\\
	Let $M := \Lift(\mu)$.
	Continuity of $\widehat{\mu} = \widehat{M}$ ensures that $\BSM\big[\A, L(\V)\big] \subseteq \mathcal{L}_1\big[M, L(\V)\big]$.\\
	By the Mean Value Theorem \autocite[Theorem 7.2-1]{Ciarlet_LNFA} we have the following inequalities:
	$$\big\| (\varphi(x+th) - \varphi(x))/t \big\|    \;\leq\;     \sup \left\{ \big\|\partial_h\varphi(x+rh)\big\| \::\: r \in \overleftrightarrow{[0,t]} \right\}     \;\leq\;       \big\|\partial_h\varphi\big\|_X$$
	and consequently the Dominated Convergence Theorem \ref{28} gives $(\varphi_{th} - \varphi)/t \:\xrightarrow{t\to0}\: \partial_h\varphi$ in $L_1$ via a standard sequential argument.\\
	Next, for each $A\in\A$ we can write:
	$$\frac{(M \varphi)_{th} - M \varphi}{t}(A)    \;=\;     \int_{A} \frac{\varphi_{th} - \varphi}{t} \diff{[M_{th} - M]}     \;+\;    \int_{A} \frac{\varphi_{th} - \varphi}{t} \diff{M}       \;+\;     \int_{A} \varphi \diff{\left[ \frac{M_{th} - M}{t} \right]}$$
	Now we verify that this measure converges in semivariation to the specified limit as $t\to0$: 
	\begin{align*}
		&\left\| \frac{(M \varphi)_{th} - M \varphi}{t}    \:-\:    \partial_hM \: \varphi   \:-\:   M \: \partial_h\varphi \right\|_{sv}         \;\leq\;      \left\|  \int_{(\cdot)} \frac{\varphi_{th} - \varphi}{t} \diff{[M_{th} - M]} \right\|_{sv}          \;+\; \\
		&\left\|  \int_{(\cdot)} \frac{\varphi_{th} - \varphi}{t} \diff{M}    \:-\:   M \: \partial_h\varphi  \right\|_{sv}         \;+\;      \left\|  \int_{(\cdot)} \varphi \diff{\left[ \frac{M_{th} - M}{t} \right]}     \:-\:   \partial_hM \: \varphi   \right\|_{sv}         \;=\;      N_1(t)   +   N_2(t)    +   N_3(t) 
	\end{align*}
	The first summand tends to zero by semivariational continuity of $\mu$ along $h$:
	$$N_1(t)    \;\:\leq\;\:    \widehat{[M_{th}-M]}_1\left( \frac{\varphi_{th}-\varphi}{t} ,\: X \right)    \;\:=\;\:    \widehat{[\mu_{th}-\mu]}_1\left( \frac{\varphi_{th}-\varphi}{t} ,\: X \right)    \;\:\leq\;\:   \big\|\partial_h\varphi\big\|_X   \:  \big\| \mu_{th} -\mu \big\|_{sv}   \:\;\xrightarrow{t\to0}\:\; 0$$
	For the second we use the aforementioned $L_1$ convergence of the quotient:
	$$N_2(t)    \;\:\leq\;\:    \widehat{M}_1\left( \frac{\varphi_{th} -\varphi}{t} - \partial_h\varphi \:,\: X \right)    \;\:=\;\:    \widehat{\mu}_1\left( \frac{\varphi_{th} -\varphi}{t} - \partial_h\varphi \:,\: X \right)   \;\:=\;\:   \left\| \frac{\varphi_{th} -\varphi}{t} - \partial_h\varphi\right\|_{L_1}   \:\;\xrightarrow{t\to0}\:\; 0$$
	Lastly, for the third summand we utilize semivariational differentiability of the measure along $h$:
	$$N_3(t)    \;\:\leq\;\:   \widehat{\left[\frac{M_{th}-M}{t} - \partial_hM\right]}_1(\varphi,X)   \;\:=\;\:    \widehat{\left[\frac{\mu_{th}-\mu}{t} - \partial_h\mu\right]}_1(\varphi,X)    \;\:\leq\:\;     \big\| \varphi \big\|_X \left\| \frac{\mu_{th}-\mu}{t} - \partial_h\mu \right\|_{sv}   \:\;\xrightarrow{t\to0}\:\; 0$$
	The transition from $M$ back to $\mu$ in the lines above is justified by \ref{30} and \ref{35}.
\end{proof}

\begin{theorem}[Leibniz formula of order $k$ in $\Top_{sv}$] \label{48}
	Suppose $\mu \in  C^{k}\big[L(\V, \W), \Top_{sv} \big]$ with continuous semivariation $\widehat{\mu}$.
	Let $\varphi \in C_B^{k}\big[X, L(\V) \big]$.\\
	Then: \qquad
	$\displaystyle  \mu \varphi \:\in\: C^{k}\big[L(\V,\W), \Top_{sv}\big]     \qquad\text{and}\qquad     \forall h\in M_0^kX: \; \partial_h(\mu \varphi) = \sum_{\alpha+\beta=h} C(\alpha,\beta) \cdot \partial_{\alpha}\mu \cdot \partial_{\beta}\varphi  $
\end{theorem}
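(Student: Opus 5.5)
The plan is to argue by induction on $k$, with Theorem \ref{47} supplying each single-direction step and Theorem \ref{42} turning the resulting derivative formulas into membership in $C^k\big[L(\V,\W),\Top_{sv}\big]$.

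\textbf{Preparation.} Fix the standard basis $E=(e_1\cdots e_d)$. By Theorem \ref{42} it suffices to show two things for every $s\in S_0^kE$: the mixed Fomin derivative $\partial_s(\mu\varphi)$ exists, and it is continuous in $\Top_{sv}$ along each $e_i$. We will also need the hypotheses to be preserved under differentiation of $\mu$. For $|s|\le k-1$ the measure $\partial_s\mu$ is Fomin differentiable in $\Top_{sv}$, since $\mu\in C^k(\Top_{sv})$. Each $\partial_s\mu$ is moreover a setwise limit of difference quotients of lower-order derivatives, so Theorem \ref{40}(c), applied inductively, gives continuity of $\widehat{\partial_s\mu}$. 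Finally, $\varphi\in C_B^k$ gives $\partial_\beta\varphi\in BC\big[X,L(\V)\big]$ for $|\beta|\le k$; these are in $\BSM\big[\A,L(\V)\big]$ by Theorem \ref{9}, because continuous images of $\mathbb{R}^d$ are separable.

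\textbf{Induction step (derivative formula).} Suppose that for some sequence $s$ with $|s|=j<k$,
$$\partial_s(\mu\varphi)=\sum_{\alpha+\beta=s}C(\alpha,\beta)\,\partial_\alpha\mu\cdot\partial_\beta\varphi ,$$
where $s$ is viewed as a multivector. Apply Theorem \ref{47} to each summand, using the measure $\partial_\alpha\mu$ and the multiplier $\partial_\beta\varphi$ along $e_i$. This is allowed because $|\alpha|\le j<k$, so $\partial_\alpha\mu\in\mathcal{D}(\Top_{sv},e_i)$ with continuous semivariation, and both $\partial_\beta\varphi$ and $\partial_{e_i}\partial_\beta\varphi$ lie in $\BSM$. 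Each summand therefore differentiates to $\partial_{\alpha+e_i}\mu\cdot\partial_\beta\varphi+\partial_\alpha\mu\cdot\partial_{\beta+e_i}\varphi$. Since $\Top_{sv}$-differentiability is linear, the sum is differentiable as well. Collecting terms uses the Pascal identity
$$C(\alpha-e_i,\beta)+C(\alpha,\beta-e_i)=C(\alpha,\beta),$$
with the convention that a term is zero when the shifted multivector is undefined. This is the combinatorial splitting described after the definition of $C$, and it yields the formula for $s$ extended by $e_i$. Because $\mu$ and $\varphi$ are $C^2$, the derivatives do not depend on the order of differentiation (Schwartz, as noted in the paper), so it is legitimate to index by multivectors $h\in M_0^kX$. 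The formula for a general $h$, rather than only basis directions, follows by linearity of Fomin derivatives in the direction vectors once $C^k$ is established.

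\textbf{Continuity along $e_i$ (main obstacle).} Each term $\partial_\alpha\mu\cdot\partial_\beta\varphi$ with $|\alpha|+|\beta|\le k$ must be shown to be continuous in $\Top_{sv}$ along $e_i$. Translation acts on the product by $(\nu\psi)_h=\nu_h\psi_h$, where $\psi_h$ is the translated function. This gives the splitting
$$\|(\nu\psi)_h-\nu\psi\|_{sv}\le\|\psi\|_X\,\|\nu_h-\nu\|_{sv}+\widehat{\nu}_1(\psi_h-\psi,X),$$
with $\nu=\partial_\alpha\mu$ and $\psi=\partial_\beta\varphi$, proved exactly as the estimates of $N_1$ and $N_3$ in Theorem \ref{47}, through Theorems \ref{30} and \ref{35}. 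The first term tends to zero because $\mu\in C^k(\Top_{sv})$. For the second term, $\psi$ is bounded and continuous, so $\psi_{te_i}\to\psi$ pointwise while being dominated by the constant $2\|\psi\|_X$, which lies in $\mathcal{L}_1$ since $\widehat{\nu}$ is continuous. Dominated Convergence, Theorem \ref{28}, applied sequentially, then gives the convergence. I expect this step to be the main delicate point, in particular confirming that $\widehat{\partial_\alpha\mu}$ is continuous for $|\alpha|=k$, where Fomin differentiability in $\Top_{sv}$ is no longer assumed. The intended fix is to note that $\partial_\alpha\mu$ is the setwise limit of difference quotients of $\partial_{\alpha-e_j}\mu$, whose semivariations are continuous, and to apply Theorem \ref{6}. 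With this in place, Theorem \ref{42} yields $\mu\varphi\in C^k(\Top_{sv})$.
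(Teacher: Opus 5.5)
Your proof is correct and uses the same ingredients as the paper's: Theorem~\ref{47} for each first-order step, the splitting $(\nu\psi)_h-\nu\psi=(\nu_h-\nu)\psi_h+\nu(\psi_h-\psi)$ with Dominated Convergence for $\Top_{sv}$-continuity (exactly the paper's four-term estimate in the case $k=1$), and the criterion of Theorem~\ref{42}. The difference is only organizational: the paper uses Theorem~\ref{42} just for $k=1$ and gets $C^{k+1}$ by applying the order-$k$ statement to the pairs $(\partial_i\mu,\varphi)$ and $(\mu,\partial_i\varphi)$, tacitly using the continuity of $\widehat{\partial_i\mu}$ that you make explicit, while you unroll this recursion into an explicit Pascal-identity computation and check the order-$k$ criterion directly (Theorem~\ref{40}(c) already covers $|\alpha|=k$, since it needs only $\Top_{set}$-differentiability, so the point you flag as delicate is not an obstacle).
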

\begin{proof}
	The proof is carried out by induction.
	Take a basis $(e_1 \cdots e_d)$ for X.\\
	When $k=0$ the formula becomes an identity.
	Consider $k=1$.
	By Theorem \ref{47} the measure $\mu \varphi$ is differentiable in $\Top_{sv}$ along any vector and the Leibniz formula is valid.
	According to the continuity criterion \ref{42} it remains to verify continuity of basis Fomin derivatives at zero along the basis directions:
	\begin{align*}
		&\Big\| (\partial_i[\mu \varphi])_{te_j} - \partial_i[\mu \varphi] \Big\|_{sv}   \;=\; 
		\Big\|   (\partial_i\mu)_{te_j} \varphi_{te_j}   \:-\:  (\partial_i\mu)\varphi    \:+\:   \mu_{te_j} (\partial_i\varphi)_{te_j}   \:-\:   \mu(\partial_i\varphi)  \Big\|_{sv}  \;\leq\;  \\
		&\Big\|   \big[(\partial_i\mu)_{te_j}  - \partial_i\mu\big]\varphi_{te_j}   \Big\|       \:+\:        \Big\| \partial_i\mu \big[ \varphi_{te_j} -\varphi \big]   \Big\|         \:+\:        \Big\|  \big[ \mu_{te_j} -  \mu  \big]  (\partial_i\varphi)_{te_j}  \Big\|         \:+\:        \Big\|  \mu \big[  (\partial_i\varphi)_{te_j} -  \partial_i\varphi \big]    \Big\|      \leq\\
		&\big\| \varphi \big\|_X \Big\| (\partial_i\mu)_{te_j}  - \partial_i\mu \Big\|_{sv}    +\:    \widehat{\partial_i\mu}_1\Big[\varphi_{te_j} - \varphi , X\Big]      \:+\:      \big\|\partial_i\varphi \big\|_X \Big\| \mu_{te_j} -  \mu \Big\|_{sv}        +\:    \widehat{\mu}_1\Big[(\partial_i\varphi)_{te_j} -  \partial_i\varphi , X\Big] 
 	\end{align*}
	All summands in the last line converge to zero when $t\to0$ by the Dominated Convergence Theorem \ref{28} due to smoothness of $\mu$ and $\varphi$.
	Thus $\mu \varphi \:\in\: C^{1}\big[L(\V,\W), \Top_{sv}\big]$.\\
	Now assume the statement is true up to $k\geq 1$ inclusive, and the conditions are satisfied for $k+1$.
	Then $\mu \varphi \:\in\: C^{1}\big[L(\V,\W), \Top_{sv}\big]$ and the first order Leibniz formula holds for any vector.
	It suffices to show, that all the first order basis Fomin derivatives $\partial_i(\mu \varphi)    =    \partial_i\mu \: \varphi   +   \mu \: \partial_i\varphi$ are in the class $C^{k}\big[L(\V,\W), \Top_{sv}\big]$.
	But it follows from the induction hypothesis, since $\partial_i\mu, \: \varphi, \:   \mu, \: \partial_i\varphi$ are all of class $C^k$.
	Now for any multivector $h\in MX$ repeated application of the first order Leibniz formula with obvious combinatorial reasoning yields the expression in the statement of the theorem.
\end{proof}

Now we turn our attention to establishing the integration by parts formula.

\begin{theorem}[Integration by parts formula] \label{49}
	Suppose $\mu \in  \mathcal{D} \big[L(\V, \W), \Top_{set}, h  \big]$ with continuous semivariation $\widehat{\mu}$.\\
	Also let $f \in \BSM(\A,\V)$ \:and\: $1\leq p <\infty$. Then:
	\begin{enumerate}[label=\alph*)]
	\item
	$\displaystyle \forall \gamma \in \{\mu, \: \mu_{th}-\mu, \: \partial_h\mu\} \;:\; \BSM(\A,\V) \subseteq \mathcal{L}_p(\gamma, \V)$
	
	\item
	$\displaystyle \forall A \in \A    \;:\;     \int_{A} f \diff[\mu_{th}-\mu]     \:=\:    \int_{0}^{t} \int_{A+rh} \partial_h\mu(\diff x)  \: f(x-rh) \diff r$
	
	\item 
	If $f$ is continuous along $h$ almost everywhere $\widehat\mu$:
	$$\int_{X} \frac{\mu_{th}-\mu}{t}(\diff x)  \: f(x)  \;\xrightarrow{t\to 0}\;  \int_{X} \partial_h\mu(\diff x) \: f(x)          \qquad\qquad        \int_{X} \mu(\diff x) \: \frac{f_{th}-f}{t}(x)  \;\xrightarrow{t\to 0}\;  -\int_{X} \partial_h\mu(\diff x) \: f(x)$$
	
	\item
	If almost everywhere $\widehat\mu$ there exists a limit $\partial_hf = \lim_{t\to 0} (f_{th} - f)/t$ and eventually under $t\to 0$ it holds that $|(f_{th} - f)/t| \leq g \in \mathcal{L}_p(\mu)$, then $\partial_hf \in \mathcal{L}_p(\mu, \V)$ and:
	$$\int_{X} \mu(\diff x) \: \partial_hf(x)    \;=\;   - \int_{X} \partial_h\mu(\diff x) \: f(x)$$
	\end{enumerate}
\end{theorem}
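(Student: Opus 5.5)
The plan is to treat the four assertions in order, each time proving the statement first for $\A$-simple functions and then passing to $f \in \BSM(\A,\V)$ with the Dominated Convergence Theorem \ref{28}.

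\emph{Assertion (a).} First I show that each $\widehat{\gamma}$ is continuous on $\A$.
\begin{itemize}
\item For $\gamma=\mu$ this is assumed.
\item For $\gamma=\mu_{th}-\mu$ it follows because $\widehat{\mu_{th}}(A)=\widehat{\mu}(A+th)$ is continuous and semivariation is subadditive in the measure.
\item For $\gamma=\partial_h\mu$ it is Theorem \ref{40}(c).
\end{itemize}
Then, for bounded $f$, Theorem \ref{22}(f) gives $\widehat{\gamma}_p(f,A)\le \|f\|_X\,\widehat{\gamma}(A)^{1/p}$. This is continuous, so $f\in\mathcal{L}_p(\gamma,\V)$.

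\emph{Assertion (b).} Take $f=\sum_i\chi_{A_i}v_i$ simple.
\begin{itemize}
\item The left-hand side equals $\sum_i\big[\mu((A\cap A_i)+th)-\mu(A\cap A_i)\big]v_i$.
\item By Theorem \ref{39}(c) this is $\sum_i\int_0^t\partial_h\mu((A\cap A_i)+rh)v_i\,dr$. Evaluation at $v_i$ is a bounded operator and commutes with the Bochner integral.
\item The integrand in $r$ is exactly $\int_{A+rh}\partial_h\mu(dx)\,f(x-rh)$.
\end{itemize}
For general $f\in\BSM(\A,\V)$, Theorem \ref{10} gives simple $s_n\to f$ everywhere with $|s_n|\le|f|$.
\begin{itemize}
\item The left sides converge by (a) and Theorem \ref{28} applied to $\mu_{th}-\mu$.
\item For each fixed $r$, $s_n(\cdot-rh)\to f(\cdot-rh)$ boundedly everywhere, so the inner integrals converge by (a) for $\partial_h\mu$.
\item The inner integrals are bounded by $\|f\|_X\widehat{\partial_h\mu}(X)<\infty$, using Theorem \ref{40}(c).
\end{itemize}
The inner integral is then strongly measurable in $r$ as a pointwise limit (Theorem \ref{9}), and the Bochner dominated convergence theorem in $r$ closes the argument.

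\emph{Assertion (c).} Set $F(r):=\int_X\partial_h\mu(dx)\,f(x-rh)$. By (b) with $A=X$, the first quotient equals $\frac1t\int_0^tF(r)\,dr$, so it suffices to show that $F$ is continuous at $0$.
\begin{itemize}
\item For $x$ in the $\widehat\mu$-full set where $f$ is continuous along $h$, we have $f(x-rh)\to f(x)$.
\item By Theorem \ref{40}(e), $\widehat{\partial_h\mu}\ll\widehat\mu$, so this convergence holds $\partial_h\mu$-a.e.
\item Theorem \ref{28}, applied along arbitrary sequences $r_n\to0$, then gives $F(r)\to F(0)$.
\end{itemize}
For the second limit I would first prove the translation formula $\int_X\mu(dx)\,f(x+th)=\int_X\mu_{-th}(dx)\,f(x)$. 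It is immediate for simple functions and extends by the same approximation argument. With it,
$$\int_X\mu(dx)\,\frac{f_{th}-f}{t}(x)=\int_X\frac{\mu_{-th}-\mu}{t}(dx)\,f(x)=-\int_X\frac{\mu_{(-t)h}-\mu}{-t}(dx)\,f(x),$$
and the first limit, taken with $-t$ in place of $t$, gives $-\int_X\partial_h\mu(dx)\,f(x)$. I expect this step to be the main technical obstacle: keeping track of null sets under translation, and of measurability in $r$, so that the use of Theorem \ref{28} is legitimate.

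\emph{Assertion (d).}
\begin{itemize}
\item $\partial_hf$ is an a.e. limit of strongly measurable functions, so it lies in $\SM(\mu,\V)$.
\item Since $|\partial_hf|\le g$, Theorem \ref{22}(b) shows that $\widehat\mu_p(\partial_hf,\cdot)$ is continuous, so $\partial_hf\in\mathcal{L}_p(\mu,\V)$.
\item Along any sequence $t_n\to0$, Theorem \ref{28} gives $(f_{t_nh}-f)/t_n\to\partial_hf$ in $L_p$, hence in $L_1$ by Proposition \ref{26}.
\item The integrals over $X$ therefore converge by the bound following Theorem \ref{23}.
\item Existence of $\partial_hf$ a.e. implies continuity of $f$ along $h$ a.e., so the second limit in (c) identifies the limit as $-\int_X\partial_h\mu(dx)\,f(x)$.
\end{itemize}
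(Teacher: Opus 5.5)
Your proposal is correct and follows essentially the same route as the paper. It proves (a) from continuity of the three semivariations and (b) from the simple-function case via Theorem~\ref{39}(c), extended by Theorem~\ref{10}, Dobrakov dominated convergence for each fixed $r$, and Bochner dominated convergence in $r$; it proves (c) from the case $A=X$ of (b), continuity of $r\mapsto\int_X\partial_h\mu(\diff x)\,f(x-rh)$ at $0$, and the translation identity $\int_X f_{th}\,\diff\mu=\int_X f\,\diff\mu_{-th}$, and (d) by dominated convergence plus (c), while making explicit a few steps the paper only asserts (continuity of $\widehat{\mu_{th}-\mu}$ and $\widehat{\partial_h\mu}$, and that translation identity).
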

\begin{proof}
	\vspace{-20pt}
	\begin{enumerate}[leftmargin=*, label=\alph*)]
		\item
		Continuity of $\widehat{\mu}$ implies continuity of $\widehat{\gamma}$ for $\gamma \in \{\mu, \: \mu_{th}-\mu, \: \partial_h\mu\}$, which in turn ensures $\displaystyle \BSM(\A,\V) \subseteq \mathcal{L}_p(\gamma, \V)$.
		
		\item
		Fix $t \in \mathbb{R}$.
		Due to the first statement we have $f \in \mathcal{L}_p(\mu_{th}-\mu, \V)$ and $f_{-rh} \in \mathcal{L}_p(\partial_h\mu, \V)$ for any $r\in\mathbb{R}$.\\
		First we prove the claim for simple functions.
		Let $\Sim(\A,\V) \ni s = \sum_{i=1}^{m} \chi_{B_i} v_i$ and $A\in\A$.
		From Theorem \ref{39} it follows that $ \left[r \mapsto \int_{A+rh} \partial_h\mu(\diff{x}) s(x-rh) \right]$ is locally Bochner integrable.
		Next we get the chain of equalities:
		\begin{multline*}
			\int_{A} s \diff{[\mu_{th} - \mu]}     \;=\;
			\sum_{i=1}^{m} \Big( \mu(A\cap B_i + th) - \mu(A\cap B_i) \Big)v_i     \;=\;\\
			\sum_{i=1}^{m} \left( \int_{0}^{t} \partial_h\mu(A\cap B_i + rh) \diff{r} \right)v_i    \;=\;
			\int_{0}^{t} \left( \sum_{i=1}^{m} \partial_h\mu(A\cap B_i + rh)v_i \right) \diff{r}   \;=\; \\
			\int_{0}^{t} \int_{A} (\partial_h\mu)_{rh}(\diff x) \cdot s(x) \diff{r} \;=\;
			\int_{0}^{t} \int_{A+rh} \partial_h\mu(\diff x) \cdot s(x-rh) \diff{r}
		\end{multline*}
		Invoking Theorem \ref{10}, we can take a sequence $\{s_n\}_1^\infty \in \Sim(\A,\V)^{\mathbb{N}}$ such that $s_n \to f$ and $|s_n| \nearrow |f|$.
		By the Dominated Convergence Theorem \ref{28} we obtain for any $A\in\A$ and $r\in\mathbb{R}$:
		\begin{align*}
			\int_{A} s_n \diff{[\mu_{th} - \mu]}     \;&\xrightarrow{n\to\infty}\;     \int_{A} f \diff{[\mu_{th} - \mu]} \\
			g_n(r,A)    \;:=\;     \int_{A+rh} \partial_h\mu(\diff x) \cdot s_n(x-rh) \diff{r}         \;&\xrightarrow{n\to\infty}\;        \int_{A+rh} \partial_h\mu(\diff x) \cdot f(x-rh) \diff{r}     \;=:\;     g(r,A)
		\end{align*}
		As already mentioned, $g_n(\cdot, A)$ are Bochner integrable on $\overleftrightarrow{[0,t]}$ and it is obvious form the definition that $\big\|g_n(r,A)\big\|_{\W} \leq \big\|f\big\|_X \cdot \widehat{\partial_h \mu}(X)$.
		Once again applying the Dominated Convergence Theorem, this time for the Bochner integral \autocite[Theorem VI.5.8]{Lang_RFA}, we obtain $g(\cdot, A) \in L_1\left(\overleftrightarrow{[0,t]}, \W \right)$ and :
		$$ \int_{0}^{t} g_n(r,A) \diff{r}     \;\xrightarrow{n\to\infty}\;      \int_{0}^{t} g(r,A) \diff{r}$$
		Combining the above results gives:
		$$\int_{A} s_n \diff{[\mu_{th} - \mu]}       \;=\;        \int_{0}^{t} \int_{A+rh} \partial_h\mu(\diff x) \cdot s_n(x-rh) \diff{r}     \;\xrightarrow{n\to\infty}\;      \int_{0}^{t} \int_{A+rh} \partial_h\mu(\diff x) \cdot f(x-rh) \diff{r}$$
		
		\item
		As a special case of the previous statement we have:
		$$\int_{X} f \diff\left[\frac{\mu_{th}-\mu}{t} \right]    \;=\;   \frac{1}{t}   \int_{0}^{t} \int_{X} \partial_h\mu(\diff x)  \: f(x-rh) \diff{r}$$
		Suppose $f$ is continuous along $h$ almost everywhere $\widehat{\mu}$.\\
		Note that $\forall N: \; \widehat{\mu}(N)=0 \implies \widetilde{\partial_h\mu}(N)=0  \implies \widehat{\partial_h\mu}(N)=0$.\\
		Therefore $\widehat{\partial_h\mu}$ - almost everywhere we have $\big\| f(x+th) - f(x) \big\| \xrightarrow{t\to0} 0$.\\
		Theorem \ref{28}  now implies that \; $\widehat{\partial_h\mu}_1\big( f_{th} - f , X\big) \xrightarrow{t\to0} 0$.
		This gives the first limit as follows:
		\begin{multline*}
			\left\|  \int_{X} f \diff\left[\frac{\mu_{th}-\mu}{t} \right]    \quad-\quad   \int_{X} f \diff{\big[\partial_h\mu\big]}  \right\|_{\W} = \\
			\left\|  t^{-1} \int_{0}^{t} \int_{X} \partial_h\mu(\diff x)\:f(x-rh) \diff{r}      \quad-\quad    \int_{X} \partial_h\mu(\diff x)\:f(x)    \right\|_{\W} =\\
			t^{-1} \left\|  \int_{0}^{t} \int_{X} \partial_h\mu(\diff x)\:  \big[ f(x-rh) - f(x) \big] \diff{r}     \right\|_{\W}  \leq \\
			t^{-1}  \int_{\overleftrightarrow{[0,t]}}   \left\|\int_{X} \partial_h\mu(\diff x)\:  \big[ f(x-rh) - f(x) \big]  \right\|_{\W}  \diff{r} \leq \\
			\sup \left\{   \left\|\int_{X} \partial_h\mu(\diff x)\:  \big[ f(x-rh) - f(x) \big]  \right\|_{\W}    \;:\; r \in \overleftrightarrow{[0,t]} \right\} \leq \\
			\sup \left\{    \widehat{\partial_h\mu}_1\big( f_{-rh} - f , X\big)    \;:\; r \in \overleftrightarrow{[0,t]} \right\}    \;\xrightarrow{t\to0} 0
		\end{multline*}
		The second limit is derived from the first by transforming the integral:
			$$\int_{X}  \frac{f_{th}-f}{t} \diff\mu     \;=\;     t^{-1} \left( \int_{X} f \diff{\mu_{-th}}   \;-\;  \int_{X} f \diff{\mu} \right)    \;=\;  - \int_{X} f \diff\left[\frac{\mu_{-th}-\mu}{-t} \right]    \;\xrightarrow{t\to0}\;      -\int_{X} f \diff{\big[\partial_h\mu\big]}   $$
		
		\item
		If the conditions of this clause are satisfied, Theorem \ref{28} gives $\partial_hf \:\in\: \mathcal{L}_p(\mu,\V)$ and:
		$$\int_{X} \partial_hf \diff{\mu}    \;=\;   \lim_{t\to0} \int_{X} \frac{f_{th} - f}{t} \diff{\mu}    \;=\;   - \int_{X} f \diff{\big[\partial_h\mu\big]}  $$
	\end{enumerate}
\end{proof}

Note that, although Theorem \ref{49} is stated for $\V$-valued functions, the fact that the operator $\Lift$ is a linear isometry commuting with translations and derivatives, allows to elevate this result verbatim to $L(\V)$-valued functions.

As a direct corollary of Theorems  \ref{49} and \ref{48} we immediately obtain the integration by parts formula of higher order.

\begin{corollary}[Integration by parts formula of order $k\geq0$] \label{50}
	Suppose $\mu \in  C^{k}\big[L(\V, \W), \Top_{sv} \big]$ with continuous semivariation $\widehat{\mu}$.\\
	Also assume $f \in C_B^{k}\big[X, \V \big]$    \:and\:     $\varphi \in C_B^{k}\big[X, L(\V) \big]$.\\
	Then for any multivector $h \in M_0^kX$ of order up to and including $k$ the following formula holds:
	$$\int_{X} \mu(\diff x) \: \varphi(x) \: \partial_hf(x)        \;\:=\;\:       (-1)^{|h|}  \sum_{\alpha+\beta=h} C(\alpha,\beta) \int_X \partial_{\alpha}\mu(\diff{x}) \: \partial_{\beta}\varphi(x) \: f(x)$$
\end{corollary}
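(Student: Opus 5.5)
Fix a multivector $h \in M_0^kX$ and argue by induction on $|h|$, iterating the first-order integration by parts formula of Theorem \ref{49}(d), lifted to $L(\V)$-valued integrands via $\Lift$ as in the remark after that theorem. For $|h|=0$ the formula is trivial, since then $\alpha=\beta=0$ and $C(0,0)=1$. To pass from $h$ to $h+e$ for a single vector $e$, write $\partial_{h+e}f = \partial_e(\partial_h f)$; by the Schwartz theorem the order of differentiation does not matter for $f \in C_B^k$. Set $\nu := \mu\varphi$, i.e. $\Lift(\mu)\varphi$. By Theorem \ref{48}, $\nu \in C^k\big[L(\V,\W),\Top_{sv}\big]$, and $\partial_\gamma \nu = \sum_{\alpha+\beta=\gamma} C(\alpha,\beta)\,\partial_\alpha\mu\,\partial_\beta\varphi$ for every $\gamma\in M_0^kX$. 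The left-hand side of the corollary is then $\int_X \partial_h f \diff\nu$, by the change of measure formula Theorem \ref{33}(c). It applies because $\varphi$ and $\partial_h f$ are bounded and strongly measurable: they are continuous with separable range, hence in $\BSM$ by Theorem \ref{9}, and they are $\mathcal{L}_1$ since $\widehat\mu$ is continuous.

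The cleanest route avoids induction inside the integral. Apply Theorem \ref{49}(d) $|h|$ times to the measure $\nu$, each time along one vector of $h$. At the $j$-th step the measure is $\partial_{\gamma_j}\nu$ with $|\gamma_j|<|h|\le k$, and the function is $\partial_{h-\gamma_j-e}f$. Each step needs four hypotheses:
\begin{enumerate}
\item $\partial_{\gamma_j}\nu$ is Fomin differentiable in $\Top_{set}$, which follows from $C^k$ in $\Top_{sv}$.
\item Its semivariation is continuous. This comes from Theorem \ref{40}(c) applied repeatedly: $\widehat\nu \le \|\varphi\|_X\widehat\mu$ is continuous, and continuity passes to every derivative.
\item The function is in $\BSM(\A,\V)$.
\item The difference quotients $|(f_{te}-f)/t|$ are dominated by the constant $\|\partial_e\partial_{\cdot}f\|_X$. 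This follows from the mean value theorem, and constants lie in $\mathcal{L}_p$ because $\widehat\mu$ is continuous.
\end{enumerate}
Each application contributes a factor $-1$. The result is
$$\int_X \partial_h f\diff\nu = (-1)^{|h|}\int_X f \diff[\partial_h\nu].$$

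It remains to expand $\partial_h\nu$ by Theorem \ref{48} and distribute the integral. The integral of $f$ against a finite sum of measures is additive, and the integral against $\partial_\alpha\mu\,\partial_\beta\varphi$, i.e. $\Lift(\partial_\alpha\mu)\partial_\beta\varphi$, equals $\int_X \partial_\alpha\mu(\diff x)\,\partial_\beta\varphi(x) f(x)$ again by Theorem \ref{33}(c). Its hypothesis holds because $|\partial_\beta\varphi|\,|f|$ is bounded and $\widehat{\partial_\alpha\mu}$ is continuous, so the product lies in $\mathcal{L}_1(\partial_\alpha\mu)$.

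The main obstacle is bookkeeping of hypotheses rather than any new estimate. Two points need care. First, Theorem \ref{33}(c) is stated for a $\V'$-valued $f$ against $\gamma=\mu g$; here the integrand is $\varphi\cdot\partial_h f$, so we take $\V'=\V$ and $g=\varphi$. Second, continuity of the semivariation of each intermediate measure $\partial_\gamma\nu$ must be secured, and I would do this through Theorem \ref{40}(c) together with Theorem \ref{6}. With these checked, the formula follows from the iteration together with the Leibniz expansion.
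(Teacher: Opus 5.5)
Your proposal is correct and follows the paper's own route. The paper obtains this corollary directly from Theorems \ref{49} and \ref{48}: it applies the first-order integration by parts formula $|h|$ times to the measure $\mu\varphi$ and then expands $\partial_h(\mu\varphi)$ by the Leibniz formula, exactly as it does again in the adjointness lemma of Section \ref{57}; you simply make the hypothesis checks explicit, with one cosmetic slip in item 4, where the constant dominant must lie in $\mathcal{L}_p(\partial_{\gamma_j}\nu)$, which follows from the continuity of $\widehat{\partial_{\gamma_j}\nu}$ established in item 2 rather than from that of $\widehat{\mu}$.
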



\newpage
\section{Vector-valued distributions w.r.t. an operator-valued measure} \label{57}
As noted in \autocite{Bogachev_DM}, a general approach to distribution theory was proposed by S.V. Fomin as a way to extend the finite-dimensional theory of L. Schwartz to infinite-dimensional locally convex spaces.
An exposition of this approach can be found in \autocite{Daletsky_Fomin}.
It involves the introduction of four classes of objects: the space of test functions, the space of test measures, dual to the first, the space of generalized measures, and, dual to the second, the space of generalized functions.
The transport of operations to the generalized spaces is carried out by means of appropriate dual maps.
In the classical case of scalar-valued functions on a finite-dimensional space the presence of the canonical Lebesgue measure allows to identify the two test spaces via densities.

In our case the aim is to replace the Lebesgue measure with an operator-valued measure $\mu$, satisfying some appropriate requirements. 
Despite the finite dimensionality of the domain, the aforementioned identification is not possible in this situation, due to the fact that test functions are vector-valued, whereas densities are in general operator-valued.
Furthermore, the goal of this section is to rigorously arrive at such a definition of generalized differentiation that will match the idea of Sobolev derivative described in an ad hoc fashion in \ref{0}.
The general scheme from \autocite{Daletsky_Fomin} will be used to achieve this; moreover, not one but two spaces of test measures will be required.

\subsection{Test and generalized spaces}

Throughout this section, as in the previous one, we have $(X,\A):= (\mathbb{R}^d, \mathcal{B}_{\mathbb{R}^d})$.
As before $\V, \W$ are Banach spaces over $\F \in \{\mathbb{R}, \mathbb{C}\}$.
We also fix the exponent $1 < p < \infty$ together with its conjugate $p^{-1}+q^{-1}=1$.

From this point onwards we will work with a fixed measure $\mu \:\in\: \CA\big[\A, L(\V,\W)\big]$, satisfying the following conditions: continuity of semivariation $\widehat{\mu}$,  regularity and full support $\supp{\mu} = X$.
The later means that the semivariation $\widehat{\mu}$ does not vanish on any nonempty open set.
We also assume that $\mu$ has logarithmic derivatives $\Theta_h, \: h \in M_0^kX$ of order up to and including $k\geq1$ and all of them belong to $\mathcal{L}_q\big[ \mu, L(\V)\big]$.
From Corollary \ref{45} we know that these conditions imply $\mu \in C^k\big[ L(\V,\W), \Top_{sv}\big]$.

\begin{definition}[Space of test functions]
	The space of test functions is defined as:\\
	$\TF := C_B^\infty(X,\V)$ (smooth functions with bounded derivatives of all orders).\\
	It is equipped with the family of seminorms $\displaystyle \{\rho_j\}_{j\geq0}$ where $\displaystyle \rho_j(f) \;:=\; \big\|f^{(j)}\big\|_X \;=\;  \sup_{x\in X} \big\| f^{(j)}(x) \big\|_{L_j(X,\V)} $
\end{definition}

\begin{definition}[Spaces of test measures]
	The spaces of test measures are defined as follows:
	\begin{enumerate}[label=\alph*)]
	\item
	$\displaystyle \TM_B := \Big\{ \mu \varphi\::\: \varphi \in C_B^\infty\big[X,  L(\V) \big] \Big\}$. \\
	It is equipped with the family of seminorms $\displaystyle \{\rho_j\}_{j\geq0}$ where $\displaystyle \rho_j(\mu \varphi) \;:=\; \big\|\varphi^{(j)}\big\|_X \;=\;  \sup_{x\in X} \big\| \varphi^{(j)}(x) \big\|_{L_j(X,L(\V))} $
	
	\item
	$\displaystyle \TM_q := \Big\{ \mu g \::\: g \in \mathcal{L}_q\big[\mu, L(\V)\big] \Big\}$.\\
	It is equipped with the norm $\| \mu g\|_q := \| g\|_q = \widehat{\mu}_q(g,X)$ 
	\end{enumerate}
\end{definition}
In the above definition the seminorm of a measure is determined by the seminorm of its multiplier, which in both cases lies in $\mathcal{L}_1\big[\A, L(\V) \big]$.
Proposition \ref{27} assures that such a measure can vanish only if the multiplier is zero $\widehat{\mu}$ almost everywhere.
For a smooth multiplier this would mean everywhere, since $\mu$ is assumed to have full support.
Therefore in both cases the space of measures is in bijective correspondence with the space of multipliers, and the topologies are correctly determined. 
Obviously $\TM_B \subseteq \TM_q$.
 
Naturally, there arises the following bilinear pairing operation, continuous in the chosen topologies:
$$\TM_q \times \TF    \;\ni\;   (\mu g , f)   \;\mapsto\;   \pairing{\mu g}{f}  \;:=\;   \int_X \mu g(\diff{x})\:f(x)    \;=\;    \int_X \mu(\diff{x})\:g(x)\big[f(x)\big]    \;\in\;   \W$$
$$\big\| \pairing{\mu g}{f} \big\|_\W    \;\leq\;   \widehat{\mu}_1\big[gf, X\big]   \;\leq\;         \big\| f \big\|_X  \:    \big\| \mu \big\|_{sv}^{1/p}    \:     \big\| g \big\|_{q} $$

\begin{definition}[Spaces of generalized measures and functions]
	We define the following generalized spaces:
	\begin{enumerate}[label=\alph*)]
		\item
		$\W$-valued generalized measures : \; $\TF'(\W)    \;:=\;   LC\big( \TF , \W \big)$
		
		\item
		$\W$-valued generalized functions : \; $\TM_B'(\W)    \;:=\;   LC\big( \TM_B , \W \big)    \qquad     \TM_q'(\W)    \;:=\;   LC\big( \TM_q , \W \big)$		
	\end{enumerate}	
		Continuity of linear operators is understood in terms of the topologies introduced above.\\
		All three spaces are equipped with the topology of pointwise convergence  (strong operator topology).\\
		Pairing between test and generalized objects will also be denoted by $\pairing{\cdot}{\cdot}$.
\end{definition}

\begin{lemma} \label{51}
	For all $1 \leq r < \infty$ the sets $C_C^\infty(X,\V)$ and $C_C^\infty\big[X,L(\V)\big]$ are dense in $\mathcal{L}_r(\mu, \V)$ and $\mathcal{L}_r\big[\mu, L(\V)\big]$ respectively.
\end{lemma}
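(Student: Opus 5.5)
The proof will be a chain of approximations, each in the Dobrakov $L_r$-seminorm $\|\cdot\|_r = \widehat{\mu}_r(\cdot, X)$. I carry it out for $\V$-valued functions. The $L(\V)$-valued case is then obtained verbatim, since $\widehat{M}_r = \widehat{\mu}_r$ by Theorem \ref{30}, where $M = \Lift(\mu)$.

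\emph{Step 1: reduce to bounded functions of bounded support.} Fix $f \in \mathcal{L}_r(\mu,\V)$ and $\varepsilon>0$; modifying on a null set, assume $f \in \SM(\A,\V)$. Continuity of $\widehat{\mu}_r(f,\cdot)$ and the monotone sets $[|f|>C]\searrow\varnothing$ give $C$ with $\widehat{\mu}_r(f,[|f|>C])<\varepsilon$. Likewise, the sets $X\setminus B_R\searrow\varnothing$ give $R$ with $\widehat{\mu}_r(f,X\setminus B_R)<\varepsilon$, where $B_R$ is a ball. By $\sigma$-subadditivity (Theorem \ref{22}) we get $\|f - f\chi_{[|f|\le C]\cap B_R}\|_r < 2\varepsilon$.

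\emph{Step 2: approximate by a compactly supported continuous function.} Set $f_1 := f\chi_{[|f|\le C]\cap B_R}$. As in the proof of Theorem \ref{38}, the Lusin-type theorem of \autocite[p. 519]{Dobrakov_1} applies because $\widehat{\mu}$ is continuous. It gives a compact $K\subseteq \overline{B_R}$ with $\widehat{\mu}(X\setminus K)<\delta$ and $f_1\restriction_K$ continuous. Dugundji's extension theorem \autocite[Theorem 4.1]{Dugundji_ETT} extends $f_1\restriction_K$ to $g$ with $\|g\|_X\le C$. Multiplying by a continuous cutoff equal to $1$ on $\overline{B_R}$ and supported in $B_{R+1}$ makes $g \in C_C(X,\V)$. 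Then $f_1-g$ vanishes on $K$ and is bounded by $2C$, so Theorem \ref{22}(f) yields
$$\|f_1-g\|_r \le 2C\,\widehat{\mu}(X\setminus K)^{1/r} < 2C\delta^{1/r}.$$
Choosing $\delta$ small makes this less than $\varepsilon$.

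\emph{Step 3: mollify.} Let $g_n := g * \eta_n$, with $\eta_n$ a standard scalar mollifier; the convolution is a Bochner integral, and $g_n \in C_C^\infty(X,\V)$. By uniform continuity of $g$ we have $g_n \rightrightarrows g$, with all supports in a fixed compact set $L$. Hence Theorem \ref{22}(f) gives $\|g_n - g\|_r \le \|g_n-g\|_X\,\widehat{\mu}(L)^{1/r} \to 0$. Alternatively, dominated convergence (Theorem \ref{28}) with dominant $C\chi_L \in \mathcal{L}_r$ does the same. The dominant lies in $\mathcal{L}_r$ because $\widehat{\mu}$ is continuous, and bounded strongly measurable functions lie in $\mathcal{L}_r$, as noted in Theorem \ref{49}(a). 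The triangle inequality of Theorem \ref{22}(e) then combines the three steps.

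\emph{Main obstacle.} The delicate step is Step 2. In the operator case $L(\V)$ is nonseparable, so the extension must keep a separable image. This holds because $g$ is a continuous image of $X$, so Theorem \ref{9} still places $g$ in $\BSM$. The Lusin theorem must also be applied to $f_1$, which is strongly measurable with separable range, exactly as in Theorem \ref{38}. Regularity of $\mu$ is what lets the compact $K$ be found; I would double-check that this is precisely what the cited Lusin analog requires.
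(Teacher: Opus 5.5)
Your argument is correct, apart from one slip noted at the end, but it follows a different route from the paper. The paper never approximates a general $f$ directly. It first notes that simple functions are dense in $\mathcal{L}_r(\mu,\V)$: Theorem \ref{10} gives $s_n\to f$ with $|s_n|\nearrow|f|$, and Theorem \ref{28} with dominant $|f|$ upgrades this to $L_r$-convergence. So it suffices to approximate a single $\chi_A v$. For that the paper uses regularity directly. By the *-Theorem of Dobrakov, $\widehat{\mu}\ll\widetilde{\mu}\le 4\overset{\smile}{\mu}$, so one can choose $K\subseteq A\subseteq U$ with $\widehat{\mu}(U\setminus K)<\varepsilon$. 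A smooth Urysohn function $\varphi$ (equal to $1$ on $K$, supported in $U$) then gives $\widehat{\mu}_r(\chi_A v-\varphi v,X)\le\|v\|\,\widehat{\mu}(U\setminus K)^{1/r}$. This needs no Lusin theorem, no Dugundji extension and no mollification. The operator-valued case is also immediate, because the approximant $\varphi v$ is a scalar bump times a fixed $v\in L(\V)$; the separability issue you flag as the main obstacle never arises.

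Your truncate, Lusin, extend, mollify scheme is the classical route through density of $C_C(X,\V)$. It acts on $f$ itself and gives that density as a by-product. However, it hides the use of regularity inside the cited Lusin analog and brings in heavier machinery. That citation is legitimate here, since the paper invokes the same Lusin analog under the same hypotheses (regular, continuous $\widehat{\mu}$) in Theorem \ref{38}.

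The slip is in Step 2. You cannot in general require both $K\subseteq\overline{B_R}$ and $\widehat{\mu}(X\setminus K)<\delta$, since $\widehat{\mu}(X\setminus\overline{B_R})$ need not be small. The containment is unnecessary. Take $K$ from the Lusin theorem as it comes, extend $f_1\restriction_K$ to $g$ with $\|g\|_X\le C$, and let $\psi$ be your cutoff, with $0\le\psi\le 1$. Since $f_1=\psi f_1$, you get $f_1-\psi g=\psi(f_1-g)$. This vanishes on $K$ and is bounded by $2C$, so the bound $2C\,\widehat{\mu}(X\setminus K)^{1/r}$ holds unchanged.
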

\begin{proof}
	Note that, as far as the measure is concerned, only the assumptions of continuous semivariation and regularity are needed for this lemma.
	We first treat the space $C_C^\infty(X,\V)$.
	It follows from Theorems \ref{10} and \ref{28} that $\Sim(\A,\V)$ is dense in $\mathcal{L}_r(\mu, \V)$.
	Hence it suffices to show that $C_C^\infty(X,\V)$ approximates any function of the form $\chi_A v$ where $v\in \V, A\in \A$.
	Fix $\varepsilon > 0$.
	Recalling Proposition \ref{36} and \autocite[*-Theorem, p. 515]{Dobrakov_1}, we have $\widehat{\mu} << \widetilde{\mu} \leq 4 \overset{\smile}{\mu}$.
	So we may replace $\overset{\smile}{\mu}$ with $\widehat{\mu}$ in the definition of regularity and approximate $A$ in semivariation with compact and open sets:
	$$\exists \: K \in \mathcal{K}_X, \: U \in \Top_X: \quad K \subseteq A \subseteq U, \quad \widehat{\mu}(U \setminus K) < \varepsilon$$
	Next, we can find a smooth bump function \autocite[Proposition 2.25]{Lee_ISM}:
	$$\varphi \in C_C^\infty\big( X, [0,1] \big) : \quad \varphi\restriction K = 1, \quad \supp{\varphi} \subseteq U$$
	Clearly $\varphi v \in C_C^\infty(X,\V)$ and there is a chain of inequalities:
	$$\widehat{\mu}_r\big[ \chi_A v - \varphi v, X\big]   \;\leq\;   \|v\| \: \widehat{\mu}_1\big[ |\chi_A  - \varphi |^{r} , X\big]^{1/r}    \;\leq\;   \|v\| \: \widehat{\mu}(U \setminus K)^{1/r}   \;\leq\;   \|v\| \: \varepsilon^{1/r}$$
	This proves the claim.
	The second assertion reduces to the first by passing to the measure $\Lift(\mu)$, which by Proposition \ref{30} is also regular with continuous semivariation.
\end{proof}

\begin{theorem} \label{52}
	Thre exist continuous linear injections: \;
	$\TF     \:\hookrightarrow\:       L_p(\mu, \V)    \:\hookrightarrow\:    \TM'_q(\W)    \:\hookrightarrow\:    \TM'_B(\W)$
\end{theorem}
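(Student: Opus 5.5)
The plan is to construct the three maps explicitly and check, for each, linearity, continuity and injectivity. The key tools are Hölder's inequality \ref{23}, the change of measure formula \ref{33}, Proposition \ref{27} and the density Lemma \ref{51}.

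\emph{First map} $\TF \hookrightarrow L_p(\mu,\V)$. This is the identity $f \mapsto [f]$.
\begin{enumerate}
\item A function $f \in C_B^\infty(X,\V)$ is continuous, so it is Borel measurable. Its image is separable, being a continuous image of $\mathbb{R}^d$. By Theorem \ref{9} and boundedness, $f \in \BSM(\A,\V)$.
\item Since $\widehat{\mu}$ is continuous, Theorem \ref{22}(f) shows that $\widehat{\mu}_p(f,\cdot) \le \|f\|_X\,\widehat{\mu}(\cdot)^{1/p}$ is continuous. Hence $f \in \mathcal{L}_p(\mu,\V)$ and $\|f\|_p \le \|\mu\|_{sv}^{1/p}\rho_0(f)$, which gives continuity.
\item For injectivity, suppose $f = 0$ $\widehat{\mu}$-a.e. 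The set $[f\neq 0]$ is open and $\widehat{\mu}$-null. Full support then forces it to be empty.
\end{enumerate}

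\emph{Second map} $L_p(\mu,\V) \hookrightarrow \TM'_q(\W)$. Send $f$ to the functional $\mu g \mapsto \int_X \mu g(\diff x)\, f(x)$. This is well defined on $\TM_q$ because $\mu g$ determines $g$ a.e. by Proposition \ref{27}.
\begin{enumerate}
\item To justify the integral, I apply Hölder \ref{23} on every set $A$. This gives $\widehat{\mu}_1(|g||f|,A) \le \widehat{\mu}_p(f,A)\,\widehat{\mu}_q(g,A)$.
\item Continuity of $\widehat{\mu}_p(f,\cdot)$, together with the finiteness $\widehat{\mu}_q(g,X)<\infty$, yields continuity of $\widehat{\mu}_1(|g||f|,\cdot)$. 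So $|g||f| \in \mathcal{L}_1(\mu)$.
\item Theorem \ref{33}(c), with $\V'=\V$, then gives $f \in \mathcal{L}_1(\mu g,\V)$ and $\int \mu g(\diff x) f = \int \mu(\diff x)\, g f$.
\item The resulting bound $\|\pairing{\mu g}{f}\|_\W \le \|g\|_q\|f\|_p$ shows two things. Each image is a continuous linear functional on $\TM_q$. The map itself is linear and continuous into the strong operator topology.
\item The value is independent of the a.e.-representative of $f$, by Theorem \ref{22}(c) applied to the bound.
\item For injectivity, suppose the image of $f$ vanishes. Test with $g = \chi_A \cdot \mathrm{Id}_\V$, which lies in $\BSM \subseteq \mathcal{L}_q$. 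This gives $\int_A f\diff\mu = 0$ for every $A\in\A$. Since $f \in \mathcal{L}_1(\mu,\V)$ by Proposition \ref{26}, Proposition \ref{27} yields $f = 0$ a.e.
\end{enumerate}

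\emph{Third map} $\TM'_q(\W) \hookrightarrow \TM'_B(\W)$. This is restriction along the inclusion $\TM_B \subseteq \TM_q$.
\begin{enumerate}
\item The inclusion is continuous, since $\|\mu\varphi\|_q = \widehat{\mu}_q(\varphi,X) \le \|\mu\|_{sv}^{1/q}\rho_0(\mu\varphi)$. So restricted functionals remain continuous.
\item Restriction is obviously linear and continuous for the pointwise topologies.
\item For injectivity, let $T \in \TM'_q(\W)$ vanish on $\TM_B$. By Lemma \ref{51}, $C_C^\infty[X,L(\V)] \subseteq C_B^\infty[X,L(\V)]$ is dense in $\mathcal{L}_q[\mu,L(\V)]$. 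Since the norm on $\TM_q$ is exactly that of the multiplier, $\TM_B$ is dense in $\TM_q$. Continuity of $T$ then gives $T=0$.
\end{enumerate}

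The main obstacle is the second map, specifically the integrability of the mixed product $gf$ with respect to the operator-valued construction. One must pass from the Hölder bound to genuine continuity of $\widehat{\mu}_1(|g||f|,\cdot)$ so that Theorem \ref{33}(c) applies. I would handle this via the setwise Hölder inequality described above, using continuity of $\widehat{\mu}_p(f,\cdot)$ and finiteness of $\widehat{\mu}_q(g,X)$.
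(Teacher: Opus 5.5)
Your proposal is correct and follows the same route as the paper. The three steps match: the inclusion $\TF\subseteq\BSM(\A,\V)\subseteq\mathcal{L}_p(\mu,\V)$ with full support giving injectivity; the Hölder-bounded pairing, tested against $g=\chi_A\operatorname{Id}_\V$ and combined with Proposition \ref{27}, for the second map; and restriction to $\TM_B$ plus the density Lemma \ref{51} for the third. You also spell out three points the paper leaves implicit: that $\int_X \mu g(\diff x)\,f(x)=\int_X \mu(\diff x)\,g(x)[f(x)]$ is justified for $f\in\mathcal{L}_p$ via Theorem \ref{33}(c), that the pairing does not depend on the representative of $f$, and that $\mathcal{L}_p\subseteq\mathcal{L}_1$ by Proposition \ref{26} before Proposition \ref{27} is invoked.
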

\begin{proof}
	We begin with the first injection.
	It is obvious that $\TF \subseteq \BSM(\A,\V) \subseteq \mathcal{L}_p(\mu,\V)$.
	Continuity of the inclusion is easily seen from:
	$$\forall f \in \TF: \; \big\|f\big\|_p = \widehat{\mu}_1(|f|^p,X)^{1/p} \leq \big\|f\big\|_X \widehat{\mu}(X)^{1/p}$$
	We need to show its injectivity as a mapping into $L_p(\mu,\V)$.
	If a function $f \in \TF$ is nonzero at least at one point, by coninuity it must be different from zero on some open set $U\subseteq X$.
	Since $\mu$ has full support, we get $\widehat{\mu}(U) > 0$.
	Thus, $f$ lies in a nonzero equivalence class of $L_p$.
	
	Now we turn to establishing the second injection as the following mapping:
	$$L_p(\mu, \V) \;\ni\; f \;\mapsto\; \pairing{\cdot}{f} \;\in\;  \TM'_q(\W)$$
	Its image is indeed contained in $\TM'_q(\W)$ due to H\"older's inequality \ref{23}.
	To verify its injectivity, let $0 \neq f \in L_p(\mu,\V)$.
	Proposition \ref{27} insures that the integral measure $\mu f$ is not identically zero.
	Therefore, there exists $A \in \A$ such that $\int_A f \diff{\mu} \neq 0$.
	But this means $\pairing{\mu g}{f} \neq 0$, where $g = \chi_A \operatorname{Id}_\V \in \mathcal{L}_q\big[\mu, L(\V)\big]$ due to continuity of $\widehat{\mu}$.
	Next, to show continuity of the injection, let $\mathcal{L}_p(\mu,\V) \ni f_n \xrightarrow{L_p} f \in \mathcal{L}_p(\mu,\V)$.
	For any $g \in \mathcal{L}_q\big[\mu,L(\V)\big]$ we have:
	\begin{multline*}
		\big\| \pairing{\mu g}{f} - \pairing{\mu g}{f_n} \big\|_\W   \;=\;  \left\| \int_X g[f - f_n] \diff{\mu} \right\|_\W    \;\leq\;   \widehat{\mu}_1\big( g[f-f_n], X \big) \\
		\;\leq\;   \widehat{\mu}_1\big( |g||f-f_n|, X \big)  \;\leq\;  \widehat{\mu}_p\big( f-f_n, X \big) \: \widehat{\mu}_q\big( g , X \big)  \;\xrightarrow{n\to\infty}\; 0
	\end{multline*}
	Therefore $f_n \to f$ in $\TM_q'(\W)$ as generalized functions.
	
	The third mapping is given by the restriction to the space $\TM_B$:
	$$\TM'_q(\W)    \;\ni\;   F   \;\mapsto\;    F\restriction_{\TM_B}   \;\in\;       \TM'_B(\W)$$
	The fact that its image lies in  $\TM'_B(\W)$ follows from the inequalities:
	$$\forall \varphi \in C_B^\infty\big[X, L(\V)\big]: \;  \big\| \pairing{\mu \varphi}{F} \big\|_\W    \;\leq\;    \big\|F\big\|_{op} \big\| \varphi \big\|_q  \;\leq\;  \big\|F\big\|_{op} \big\| \varphi \big\|_X  \big\| \mu \big\|_{sv}^{1/q}$$
	Its injectivity is a consequence of Lemma \ref{51}: 
	if $F\restriction \TM_B = 0$ then by continuity of $F$ and density of $C_B^\infty\big[X,L(\V)\big]$ in $\mathcal{L}_q\big[\mu, L(\V)\big]$ we obtain $F=0$.
	Continuity of the injection is obvious: 
	if a net $\big\langle F_t \big\rangle$ converges pointwise to $F \in \TM'_q(\W)$, then so do the restrictions.
\end{proof}

\begin{theorem}
	There exist continuous linear injections: \;
	$\TM_B     \:\hookrightarrow\:      \TM_q      \:\hookrightarrow\:     \TF'(\W)$
\end{theorem}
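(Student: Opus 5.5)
The plan mirrors the proof of Theorem \ref{52}. The first map is the inclusion $\TM_B \subseteq \TM_q$, $\mu\varphi \mapsto \mu\varphi$, and the second is
$$\TM_q \;\ni\; \mu g \;\mapsto\; \pairing{\mu g}{\cdot} \;\in\; \TF'(\W).$$
Both maps are well defined because $\mu g$ determines $g$ up to $\widehat{\mu}$-null sets (Proposition \ref{27}), as already noted after the definition of the test measure spaces. Linearity of both maps is clear.

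\textbf{The inclusion $\TM_B \subseteq \TM_q$.} Injectivity is trivial. For continuity, take $\varphi \in C_B^\infty\big[X,L(\V)\big]$. By continuity of $\widehat{\mu}$ and Theorem \ref{22}(f) we have $\varphi \in \BSM \subseteq \mathcal{L}_q$, and
$$\|\mu\varphi\|_q \;=\; \widehat{\mu}_q(\varphi,X) \;\le\; \|\varphi\|_X\, \widehat{\mu}(X)^{1/q} \;=\; \rho_0(\mu\varphi)\,\|\mu\|_{sv}^{1/q}.$$
So the norm of $\TM_q$ is dominated by a multiple of the seminorm $\rho_0$ of $\TM_B$, which gives continuity.

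\textbf{The map $\TM_q \to \TF'(\W)$: well-definedness and continuity.} The pairing estimate $\|\pairing{\mu g}{f}\|_\W \le \|f\|_X\,\|\mu\|_{sv}^{1/p}\,\|g\|_q$ shows that $\pairing{\mu g}{\cdot}$ is continuous with respect to $\rho_0$ on $\TF$. Hence it lies in $LC(\TF,\W)$.

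For continuity of the map, let $\mu g_n \to \mu g$ in $\TM_q$. For each fixed $f\in\TF$,
$$\big\|\pairing{\mu g_n}{f}-\pairing{\mu g}{f}\big\|_\W \;\le\; \|f\|_X\,\|\mu\|_{sv}^{1/p}\,\|g_n-g\|_q \;\to\; 0.$$
This is pointwise convergence, which is exactly the topology of $\TF'(\W)$. Since $\TM_q$ is normed, sequential continuity suffices.

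\textbf{Injectivity of $\TM_q \to \TF'(\W)$: the main obstacle.} Suppose $\int_X \mu(\diff x)\, g(x)[f(x)] = 0$ for all $f\in\TF$. We want $\mu g = 0$, i.e.\ $\int_A g\,v\diff\mu=0$ for every $A\in\A$ and $v\in\V$. That suffices, because $(\mu g)(A)v = \int_A gv \diff\mu$ by Lemma \ref{32}.

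\begin{enumerate}
\item The functional $\Phi: f \mapsto \int_X g[f]\diff\mu$ is continuous on $\mathcal{L}_p(\mu,\V)$. This follows from the Hölder bound $\|\Phi(f)\|_\W \le \widehat{\mu}_1(|g||f|,X)\le \|g\|_q\|f\|_p$, via Theorem \ref{33}(b),(c), applied with the density $g$.
\item Each $\chi_A v$ lies in $\mathcal{L}_p(\mu,\V)$. By Lemma \ref{51} it is the $L_p$-limit of functions $f_n \in C_C^\infty(X,\V)\subseteq \TF$.
\item Then $\int_A gv\diff\mu = \lim_n \Phi(f_n) = 0$. Here one must use Lemma \ref{32} and Theorem \ref{33}(c) to identify $\Phi(\chi_A v)$ with $\int_A gv \diff\mu$.
\end{enumerate}

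The delicate point is step 1: it needs $|g||f| \in \mathcal{L}_1(\mu)$ for $f\in\mathcal{L}_p$ and $g\in\mathcal{L}_q$. I would get this from Hölder's inequality \ref{23}: the bound $\widehat{\mu}_1(|g||f|,\cdot)\le \widehat{\mu}_p(f,\cdot)\,\widehat{\mu}_q(g,\cdot)$ shows that $\widehat{\mu}_1(|g||f|,\cdot)$ is continuous, since the factor $\widehat{\mu}_p(f,\cdot)$ is continuous and the factor $\widehat{\mu}_q(g,\cdot)$ is bounded. This yields $\mu g = 0$, and therefore $g=0$ a.e.
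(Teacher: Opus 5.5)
Your proposal is correct and follows essentially the same route as the paper. The inclusion $\TM_B\subseteq\TM_q$ is controlled by $\|\varphi\|_X\|\mu\|_{sv}^{1/q}$, continuity of $\TM_q\to\TF'(\W)$ comes from the H\"older bound on the pairing, and injectivity comes from approximating $\chi_A v$ in $L_p$ by test functions (Lemma \ref{51}) and passing to the limit with H\"older's inequality. Your explicit justification of $\int_X f\,\diff(\mu g)=\int_X g[f]\,\diff\mu$ via Theorem \ref{33}(c), with $|g||f|\in\mathcal{L}_1(\mu)$ obtained from H\"older, fills in a step that the paper uses only implicitly.
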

\begin{proof}
Continuity of the inclusion $\TM_B  \subseteq  \TM_q$ is easily seen from:
$$ \forall \varphi \in C_B^\infty\big[X, L(\V)\big]:      \;         \big\| \varphi \mu \big\|_q      \;:=\;       \big\| \varphi \big\|_q      \;\leq\;     \big\| \varphi \big\|_X \big\| \mu \big\|_{sv}^{1/q}$$
The second mappinf is given by:
$$\TM_q \;\ni\; \gamma \;\mapsto\; \pairing{\gamma}{\cdot} \;\in\;  \TF'(\W)$$
To show its injectivity, suppose $\gamma = \mu g \in \TM_q$ and $\pairing{\gamma}{\cdot} = 0$, i.e. $f \in \TF: \: \int_X f \diff{\gamma} = 0$.
Take arbitrary $ A\in\A, \: v\in \V$.
By Lemma \ref{51} there exists a sequence $\TF \ni f_n \xrightarrow{L_p} \chi_A v$.
Then: 
\begin{multline*}
	\big\| \gamma(A) v \big\|_{\W}    \;=\;    \left\| \int_{X} \chi_A v \diff{\gamma} \right\|_{\W}    \;\leq\;     \left\| \int_{X} [\chi_A v - f_n]  \diff{\gamma} \right\|_{\W}     +    \left\| \int_{X} f_n \diff{\gamma} \right\|_{\W}    \\
	=\;    \left\| \int_{X} g[\chi_A v - f_n]  \diff{\mu} \right\|_{\W}    \;\leq\;    \widehat{\mu}_1\big( g[\chi_A v - f_n] , X\big)    \;\leq\;     \big\|g\big\|_q  \big\| \chi_A v - f_n \big\|_p   \;\xrightarrow{n\to\infty}\;   0
\end{multline*}
From here we conclude that $\forall A \in \A: \gamma(A) = 0$.\\
Now we verify continuity of this injection.
Let $\TM_q \ni \gamma_n = \mu g_n \xrightarrow{n\to\infty} \gamma \in \TM_q$.
Then for any $f \in \TF$ we have:
$$\big\| \pairing{\gamma_n}{f} - \pairing{\gamma}{f} \big\|_\W     \;=\;    \left\| \int_X f \diff{[\gamma_n - \gamma]} \right\|_\W    \;=\;      \left\| \int_X [g_n-g]f \diff{\mu} \right\|_\W    \;\leq\;  \big\|f\big\|_p \big\| g_n - g \big\|_q    \:\xrightarrow{n\to\infty}\:0$$
Thus, $\gamma_n \to \gamma$ in $\TF'$ as generalized measures.
\end{proof}


\subsection{Generalized derivatives}

In light of Theorem \ref{52} let us fix the notation for the two continuous linear injections, each of which maps an element of $L_p(\mu,\V)$ to the induced regular generalized function on the respective space of test measures:
$$\mathcal{R}_q \::\: L_p(\mu,\V)  \:\ni\:   f   \:\mapsto\:     \pairing{\cdot}{f}     \:\in\:      \TM'_q(\W)              \qquad             \mathcal{R}_B    \::\:  L_p(\mu,\V)    \:\ni\:   f   \:\mapsto\:    \pairing{\cdot}{f}\restriction \TM_B   \:\in\:   \TM'_B(\W)$$
Let us also recall that for any continuous linear operator $\mathcal{T} \in LC\big(\TM_B , \TM_q \big)$ there is a continuous transpose:
$$ \mathcal{T}'  \::\:   \TM_q'(\W) \:\ni\: F  \:\mapsto\:   F \circ \mathcal{T}   \:\in\:   \TM_B'(\W)$$

\begin{definition}[The general scheme for transporting operations onto the space of generalized functions] \label{54}
	Suppose we have the following initial data:\\
	$\TF_0  \:\subseteq\:  \TF$ \;- some linear subspace\\
	$\mathcal{T}_0 \::\: \TF_0 \:\to\: \TF$ \;- a linear operator (not necessarily continuous)\\
	$\mathcal{T}_1 \::\: \TM_B \:\to\: \TM_q$ \;- a continuous linear operator\\
	Then the statements below are equivalent formulations of the same condition:
	\begin{enumerate}[label=\alph*)]
		\item
		$\displaystyle \forall \gamma \in \TM_B, \: f \in \TF_0 : \; \pairing{\gamma}{\mathcal{T}_0 f} = \pairing{\mathcal{T}_1 \gamma}{f}$
		
		\item
		$\displaystyle \mathcal{R}_B \circ \mathcal{T}_0 = \mathcal{T}'_1 \circ \mathcal{R}_q \restriction \TF_0$
	\end{enumerate}
	If this condition is satisfied, $\mathcal{T}'_1$ can be considered a continuous linear extension of $\mathcal{T}_0$ to the space $\TM'_q(\W)$, bearing in mind the injection $\TF_0 \hookrightarrow \TM'_q(\W)$.
	The result of this extended operation lies in $\TM'_B(\W)$.
\end{definition}

In particular, the described scheme allows to introduce the operation of generalized differentiation on the space $\TM'_q(\W)$ in a way consistent with classical differentiation on $\TF$.
The construction involves transposing the Fomin differentiation operator, and the adjointness condition is ensured by the integration by parts formula, derived in the preceding section.
First it must be verified that Fomin differentiation is continuous in the chosen topologies.

\begin{theorem}[Continuity of Fomin differentiation] \label{53}
	If $h \in M_0^kX$ is a multivector of order not exceeding $k$, then $\displaystyle \partial_h \:\in\: LC\big[  \TM_B , \TM_q \big]$.
\end{theorem}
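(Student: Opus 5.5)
The plan is to compute $\partial_h(\mu\varphi)$ explicitly by the Leibniz formula, rewrite it as $\mu$ times an $\mathcal{L}_q$ multiplier, and bound that multiplier's $L_q$-norm by the seminorms $\rho_j$ of $\varphi$.

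Fix $\varphi \in C_B^\infty\big[X, L(\V)\big]$ and $h \in M_0^kX$. Under the standing assumptions, $\mu \in C^k\big[L(\V,\W),\Top_{sv}\big]$ by Corollary \ref{45}, and $\widehat{\mu}$ is continuous. So Theorem \ref{48} applies and gives
$$\partial_h(\mu\varphi) \;=\; \sum_{\alpha+\beta=h} C(\alpha,\beta)\, \partial_\alpha\mu \cdot \partial_\beta\varphi .$$

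Next I would write each $\partial_\alpha\mu$ as $\mu\Theta_\alpha$, where $\Theta_\alpha \in \mathcal{L}_q\big[\mu, L(\V)\big]$ is the logarithmic derivative and $\Theta_0 = \operatorname{Id}_\V$. The product $\partial_\alpha\mu\cdot\partial_\beta\varphi = (\mu\Theta_\alpha)\partial_\beta\varphi$ should then equal $\mu(\Theta_\alpha\,\partial_\beta\varphi)$. This is the change of measure formula, Theorem \ref{33}(c), applied with $\V' = \V$ to the lifted measure $\Lift(\mu)$. That is legitimate because $\Lift$ is a $\|\cdot\|_{sv}$-isometry commuting with the relevant operations (Theorem \ref{30}, Proposition \ref{35}).

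The hypothesis of Theorem \ref{33}(c) requires $|\Theta_\alpha|\,|\partial_\beta\varphi| \in \mathcal{L}_1(\mu)$. This follows from $|\Theta_\alpha|\,|\partial_\beta\varphi| \le \rho_{|\beta|}(\varphi)\,|\Theta_\alpha|$, combined with $\Theta_\alpha \in \mathcal{L}_q \subseteq \mathcal{L}_1$ (Proposition \ref{26}) and the monotonicity in Theorem \ref{22}. Here $|\partial_\beta\varphi(x)| \le \|\varphi^{(|\beta|)}(x)\|$ up to a constant depending on $h$ and the norms of the vectors making up $\beta$. Hence
$$\partial_h(\mu\varphi) \;=\; \mu\, G_\varphi, \qquad G_\varphi := \sum_{\alpha+\beta=h} C(\alpha,\beta)\,\Theta_\alpha\,\partial_\beta\varphi .$$
The map $\varphi\mapsto G_\varphi$ is linear. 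Each term is strongly measurable as a product of a strongly measurable and a continuous function. Its $q$-semivariation is continuous, being dominated by a constant times $\widehat{\mu}_q(\Theta_\alpha,\cdot)$ (Theorem \ref{22}). So $G_\varphi \in \mathcal{L}_q\big[\mu, L(\V)\big]$ and $\partial_h(\mu\varphi) \in \TM_q$.

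For the estimate, the triangle inequality and monotonicity of Theorem \ref{22} give
$$\|\partial_h(\mu\varphi)\|_q \;=\; \widehat{\mu}_q(G_\varphi,X) \;\le\; \sum_{\alpha+\beta=h} C(\alpha,\beta)\, c_\beta\, \rho_{|\beta|}(\varphi)\, \|\Theta_\alpha\|_q .$$
This bounds the norm by a finite combination of the seminorms $\rho_0,\dots,\rho_{|h|}$ on $\TM_B$, which is exactly continuity. Well-definedness of the map on $\TM_B$ uses the bijection between measures and multipliers noted after the definition of the test measures.

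The main obstacle is the identification $(\mu\Theta_\alpha)\partial_\beta\varphi = \mu(\Theta_\alpha\partial_\beta\varphi)$. It requires applying Theorem \ref{33}, which is stated for $\V'$-valued integrands, to $L(\V)$-valued multipliers through the lift. One must check that the iterated lift behaves correctly, i.e.\ that integrating $\partial_\beta\varphi$ against $\Lift(\mu)\Theta_\alpha$ agrees with integrating $\Theta_\alpha\partial_\beta\varphi$ against $\Lift(\mu)$. The check is to apply both sides to a fixed $v\in\V$ and invoke Lemma \ref{32} and Theorem \ref{18}. A second, milder point is that a logarithmic derivative $\Theta_\alpha$ along a multivector is, by definition, a density of the Fomin derivative $\partial_\alpha\mu$; I take this as given from the standing assumptions.
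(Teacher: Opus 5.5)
Your proposal is correct and follows essentially the same route as the paper. Both arguments use the Leibniz formula (Theorem~\ref{48}), substitute $\partial_\alpha\mu=\mu\Theta_\alpha$, bound $\widehat{\mu}_q(\Theta_\alpha\partial_\beta\varphi,\cdot)\le\|\partial_\beta\varphi\|_X\,\widehat{\mu}_q(\Theta_\alpha,\cdot)$, and apply the triangle inequality to get $\|\partial_h(\mu\varphi)\|_q\le\mathrm{const}(h)\max_{j\le|h|}\rho_j(\mu\varphi)$. The paper uses the identification $(\mu\Theta_\alpha)\partial_\beta\varphi=\mu(\Theta_\alpha\partial_\beta\varphi)$ silently, whereas you justify it by testing against a fixed $v\in\V$, applying Lemma~\ref{32}(a) to both sides and Theorem~\ref{33}(c) with $\V'=\V$ to the $\V$-valued function $\partial_\beta\varphi\,v$; that justification is sound.
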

\begin{proof}
	From our assumptions on $\mu$ and Corollary \ref{48} we know that $\TM_B \subseteq C^k\big[L(\V,\W), \Top_{sv}\big]$,	
	hence the operation $\partial_h : \TM_B \to \CA\big[ \A, L(\V,\W)\big]$ is well-defined and is obviously linear.\\
	First we check that its image lies in $\TM_q$.
	Let $\varphi \in C_B^\infty\big[  X, L(\V) \big]$.
	Applying the Leibniz formula \ref{48} and appealing to the existence of logarithmic derivatives in $\mathcal{L}_q\big[\mu, L(\V)\big]$, we obtain:
	$$\partial_h(\mu \varphi)     \;=\;       \sum_{\alpha+\beta=h} C(\alpha,\beta) \cdot \partial_{\alpha}\mu \cdot \partial_{\beta}\varphi    \;=\;   \sum_{\alpha+\beta=h} C(\alpha,\beta) \cdot \mu \cdot \Theta_{\alpha} \cdot \partial_{\beta}\varphi$$
	$$\widehat{\mu}_q\big( \Theta_{\alpha} \cdot \partial_{\beta}\varphi  \:,\: \cdot \big)    \;\leq\;     \big\| \partial_{\beta}\varphi \big\|_X \: \widehat{\mu}_q\big( \Theta_{\alpha}, \cdot \big)$$
	Consequently, all $\Theta_{\alpha} \cdot \partial_{\beta}\varphi$ lie in $\mathcal{L}_q\big[\mu, L(\V)\big]$, and $\partial_h(\mu \varphi) \in \TM_q$. \\
	Continuity can be verified as follows:
	\begin{align*}
		\big\|  \partial_h(\mu \varphi) \big\|_q     \;&=\;     \widehat{\mu}_q \left[   \sum_{\alpha+\beta=h} C(\alpha,\beta) \cdot  \Theta_{\alpha} \cdot \partial_{\beta}\varphi    \:,\;  X \right]         \;\leq\;      \sum_{\alpha+\beta=h} C(\alpha,\beta)  \cdot \widehat{\mu}_q \big[  | \Theta_{\alpha} |  | \partial_{\beta}\varphi |  \:,\:  X \big]   \\
		&\leq\:      \max_{0 \leq j \leq |h|} \max_{x \in X} \big\|\varphi^{(j)}(x) \big\|_{L_j(X,L(\V))}   \cdot    \left(\prod_{x \in X} \|x\|^{h(x)}\right)   \cdot    \sum_{\alpha+\beta=h} C(\alpha,\beta)  \big\| \Theta_{\alpha} \big\|_q   \\
		&=\;     \mathrm{const}(h)       \cdot        \max_{0 \leq j \leq |h|} \rho_j(\mu \varphi)   
	\end{align*}
\end{proof}

Now we can layout the precise setup in line with Definition \ref{54}, verify the adjointness condition and obtain the explicit expression for the derivative.

\begin{lemma}
	Let $h \in M_0^kX$ be a multivector of order not exceeding $k$. Define the following operations:\\
	$\displaystyle \mathcal{T}_0 \::\: \TF \:\ni\:  f  \:\mapsto\:  \partial_hf \:\in\: \TF$ \; (continuous linear operator)\\
	$\displaystyle \mathcal{T}_1 \::\:  \TM_B   \:\ni\:  \gamma  \:\mapsto\:  (-1)^{|h|} \partial_h\gamma  \:\in\: \TM_q$ \; (continuous linear operator by \ref{53})\\
	Then:
	\begin{enumerate}[label=\alph*)]
	\item
	$\displaystyle \forall \gamma \in \TM_B, \: f \in \TF : \quad         \pairing{\gamma}{\mathcal{T}_0 f} = \pairing{\mathcal{T}_1 \gamma}{f}$
	
	\item
	$\displaystyle \forall \varphi \in C_B^\infty\big[X, L(\V)\big], \:F \in \TM'_q(\W) : \quad        \pairing{\mu\varphi}{\mathcal{T}'_1 F}    \;=\;     (-1)^{|h|}  \sum_{\alpha+\beta=h} C(\alpha,\beta)  \pairing{\mu \: \Theta_{\alpha} \: \partial_{\beta}\varphi}{F}$	
	\end{enumerate}
\end{lemma}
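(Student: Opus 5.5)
Part (a) comes from the higher order integration by parts formula, Corollary \ref{50}, applied to test objects; part (b) then follows from (a), the Leibniz rule \ref{48} and the logarithmic derivatives.

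For (a), fix $\gamma = \mu\varphi \in \TM_B$ with $\varphi \in C_B^\infty\big[X,L(\V)\big]$, and fix $f \in \TF$. Unwinding the pairing and the change of measure formula \ref{33}, the left side is
$$\pairing{\mu\varphi}{\partial_h f} = \int_X \mu(\diff x)\,\varphi(x)\,\partial_h f(x).$$
This requires $|\varphi||\partial_h f| \in \mathcal{L}_1(\mu)$, which holds because the function is bounded and strongly measurable and $\widehat{\mu}$ is continuous.

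We check the hypotheses of Corollary \ref{50}. By the standing assumptions and Corollary \ref{45}, $\mu \in C^k\big[L(\V,\W),\Top_{sv}\big]$ with continuous $\widehat{\mu}$. We also have $f \in C_B^\infty \subseteq C_B^k$ and $\varphi \in C_B^k$. The corollary therefore gives
$$\int_X \mu(\diff x)\,\varphi\,\partial_h f = (-1)^{|h|}\sum_{\alpha+\beta=h} C(\alpha,\beta)\int_X \partial_\alpha\mu(\diff x)\,\partial_\beta\varphi(x)\,f(x).$$

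The right side of (a) is $(-1)^{|h|}\pairing{\partial_h(\mu\varphi)}{f}$. By the Leibniz formula \ref{48}, $\partial_h(\mu\varphi)=\sum C(\alpha,\beta)\,\partial_\alpha\mu\cdot\partial_\beta\varphi$. Each summand $\partial_\alpha\mu\cdot\partial_\beta\varphi$ integrates $f$ to $\int_X \partial_\alpha\mu(\diff x)\,\partial_\beta\varphi\, f$. This is again the change of measure formula \ref{33}, now applied to the measure $\partial_\alpha\mu$ (its semivariation is continuous by \ref{40}) with the bounded density $\partial_\beta\varphi$. The two sides therefore coincide.

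The one point needing care is that the pairing $\pairing{\cdot}{f}$ on $\TM_q$ is defined through the density with respect to $\mu$. We must check that $\partial_h(\mu\varphi)$, viewed as $\mu\sum C(\alpha,\beta)\Theta_\alpha\partial_\beta\varphi$, gives the same integral of $f$ as the measure-level expression. This follows from \ref{33}(c), because
$$\partial_\alpha\mu\cdot\partial_\beta\varphi=\mu\,\Theta_\alpha\,\partial_\beta\varphi \quad\text{and}\quad |\Theta_\alpha\partial_\beta\varphi||f|\le C|\Theta_\alpha| \in \mathcal{L}_q\subseteq\mathcal{L}_1 \text{ (Proposition \ref{26})}.$$
This identification of integrals with respect to $\partial_\alpha\varphi$-weighted measures with density integrals is what I expect to be the main obstacle. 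It is handled by \ref{33} together with associativity of the lifted multiplication, namely $(\mu\Theta_\alpha)\partial_\beta\varphi=\mu(\Theta_\alpha\partial_\beta\varphi)$, which also follows from \ref{33}(c) applied with $L(\V)$-valued integrands via $\Lift$.

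For (b), by definition of the transpose,
$$\pairing{\mu\varphi}{\mathcal{T}_1'F}=\pairing{\mathcal{T}_1(\mu\varphi)}{F}=(-1)^{|h|}\pairing{\partial_h(\mu\varphi)}{F}.$$
As computed in the proof of Theorem \ref{53}, $\partial_h(\mu\varphi)=\sum C(\alpha,\beta)\,\mu\,\Theta_\alpha\,\partial_\beta\varphi$, where each term lies in $\TM_q$. Since $F$ is linear on $\TM_q$, the pairing splits into the stated sum.
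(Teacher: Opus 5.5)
Your proof is correct and follows the paper's argument. Part (b) is exactly the paper's computation, and for (a) the paper applies Theorem \ref{49} $|h|$ times directly to the test measure $\mu\varphi$ to reach $(-1)^{|h|}\int_X \partial_h[\mu\varphi](\diff x)\,f(x)$, whereas you go through the expanded Corollary \ref{50} and re-collapse it with the Leibniz formula \ref{48}: the same ingredients in a slightly roundabout order, and your extra consistency check via Theorem \ref{33}(c) is harmless (the paper leaves it implicit, since it defines $\pairing{\gamma}{f}$ as $\int_X \gamma(\diff x)\,f(x)$ at the level of the measure).
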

\begin{proof}
	For the empty multivector $\mathcal{T}_0$ and $\mathcal{T}_1$ become the identities and both claims are trivially valid.\\
	Assume $0 < |h| \leq k$.
	To show the first claim, take $f\in \TF, \: \varphi \in C_B^\infty\big[X, L(\V)\big]$ and apply the integration by parts formula \ref{49} $|h|$ times:
	$$\pairing{\mu \varphi}{\mathcal{T}_0 f}     \;=\;        \int_X\mu(\diff{x}) \: \varphi(x) \: \partial_hf(x)       \;=\;       (-1)^{|h|} \int_X  \partial_h[\mu \varphi](\diff{x}) \: f(x)      \;=\;     \pairing{\mathcal{T}_1 \gamma}{f}$$
	To prove the second claim we take $F \in \TM'_q(\W), \: \varphi \in C_B^\infty\big[ X, L(\V) \big]$ and use the Leibniz formula \ref{48} together with the representation $\partial_{\alpha} \mu = \mu \cdot \Theta_{\alpha}$ to obtain:
	\begin{align*}
		\mathcal{T}'_1(F)[\mu\varphi]     \;&:=\;      F \circ \mathcal{T}_1[\mu\varphi]      \;=\;      F\big[ (-1)^{|h|} \partial_h[\mu\varphi] \big]     \;=\;    (-1)^{|h|} \: F\left[  \sum_{\alpha+\beta=h} C(\alpha,\beta) \cdot \mu \cdot \Theta_{\alpha} \cdot \partial_{\beta}\varphi  \right]   \\
		&=\;     (-1)^{|h|}  \sum_{\alpha+\beta=h} C(\alpha,\beta) \: F\big[  \mu \cdot \Theta_{\alpha} \cdot \partial_{\beta}\varphi \big]    \;=\;      (-1)^{|h|}  \sum_{\alpha+\beta=h} C(\alpha,\beta)  \pairing{\mu \cdot \Theta_{\alpha} \cdot \partial_{\beta}\varphi}{F}
	\end{align*}
\end{proof}

Finally, the following definition is now fully motivated.

\begin{definition}[Generalized derivative along a multivector]
	Suppose $h \in M_0^kX$ is a multivector of order not exceeding $k$.\\
	The operation of generalized differentiation along $h$ is defined as follows:
	$$D_h   \::\:  \TM'_q(\W)    \:\ni\:     F    \:\mapsto\:    \TM'_B(\W)$$
	$$\pairing{\mu \varphi}{D_hF}       \;:=\;     (-1)^{|h|}  \sum_{\alpha+\beta=h} C(\alpha,\beta)  \pairing{\mu \cdot \Theta_{\alpha} \cdot \partial_{\beta}\varphi}{F}$$
\end{definition}

For a test function $f \in \TF$ this definition coincides with the integration by parts formula \ref{45}.
Therefore, as intended by the general scheme \ref{54}, the generalized derivative is consistent with the classical one in the sense of its action on the space $\TM_B$.


\newpage
\section{Dobrakov-Sobolev spaces} \label{58}

In this section we continue to work in the environment and notation set up in the preceding section.
Due to the results established therein, for any element of $L_p(\mu,\V)$ and any multivector $h$ of order not exceeding $k$ there is a generalized derivative defined in accordance with the scheme:
$$L_p(\mu,\V)    \;\overset{\mathcal{R}_q}{\hookrightarrow}\;    \TM'_q(\W)   \;\xrightarrow{\; D_h \;}\;  \TM'_B(\W)$$
In analogy with the classical definition of Sobolev spaces, we can now consider those functions from $L_p(\mu,\V)$, for which all the generalized derivatives of order up to and including $k$ still remain in the same class.
Thus, we are naturally led to the following definition of a Dobrakov-Sobolev space of order $(k,p)$:

\begin{definition}[Dobrakov-Sobolev space] \label{55}
	$$\DS_p^k(\mu,\V)   \;:= \;  \Big\{ f \in L_p(\mu,\V)  \;\Big\vert\;  \forall h \in M_0^kX \::\: D_h \big[ \mathcal{R}_q f \big]  \in  \mathcal{R}_B \big[ L_p(\mu,\V) \big]\Big\}$$
	Dropping the explicit mention of the injections $\mathcal{R}_q$ and $\mathcal{R}_B$, we shall write simply $D_h f \in L_p(\mu,\V)$.\\
	The corresponding Sobolev norm is defined with the help of Dobrakov semivariation:
	$$\big\| f \big\|_p^k   \;:=\;    \sum_{0 \leq |h| \leq k} \big\| D_h f \big\|_p    \:=\:    \sum_{0 \leq |h| \leq k} \widehat{\mu}_p\big(D_h f , X\big)$$
\end{definition}

Building on the results obtained previously, it is possible to give the following equivalent description of this class:
\begin{proposition} \label{56}
	A function $f \in L_p(\mu,\V)$ belongs to the class $\DS_p^k(\mu,\V)$, if for any multivector $h \in M_0^kX$ there exists a function $g_h \in L_p(\mu,\V)$ satisfying:
	$$ \forall \varphi \in C_B^\infty\big[ X, L(\V)\big] :\; \int_{X} \mu(\diff x) \: \varphi(x) \: g_h(x)        \;\:=\;\:       (-1)^{|h|}  \sum_{\alpha+\beta=h} C(\alpha,\beta) \int_X \mu(\diff{x}) \: \Theta_{\alpha}(x) \: \partial_{\beta}\varphi(x) \: f(x)$$
\end{proposition}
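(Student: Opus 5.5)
The plan is to unfold Definition \ref{55} together with the definition of the generalized derivative $D_h$ and the injections $\mathcal{R}_q$, $\mathcal{R}_B$. Everything then reduces to rewriting the pairings involved as Dobrakov integrals. Although the statement reads ``if'', we will prove both directions, since they are the same computation read in opposite orders.

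\textbf{Step 1: unfold the definitions.} Fix $f \in L_p(\mu,\V)$ and $h \in M_0^kX$. By definition, $D_h[\mathcal{R}_q f] \in \mathcal{R}_B[L_p(\mu,\V)]$ means that there is $g_h \in L_p(\mu,\V)$ such that, for every $\varphi \in C_B^\infty[X,L(\V)]$,
$$\pairing{\mu\varphi}{\mathcal{R}_B g_h} \;=\; \pairing{\mu\varphi}{D_h \mathcal{R}_q f} \;=\; (-1)^{|h|}\sum_{\alpha+\beta=h} C(\alpha,\beta)\, \pairing{\mu\,\Theta_\alpha\,\partial_\beta\varphi}{\mathcal{R}_q f}.$$
Since $\TM_B = \{\mu\varphi\}$, equality of the two functionals on $\TM_B$ is exactly equality on every $\mu\varphi$.

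\textbf{Step 2: express the pairings as integrals.} By the definition of the pairing $\TM_q\times\TF$, extended to $f\in L_p$ through $\mathcal{R}_q$,
$$\pairing{\mu g}{\mathcal{R}_q f} = \int_X \mu(\diff x)\, g(x)[f(x)] \quad\text{for } g\in\mathcal{L}_q[\mu,L(\V)].$$
We apply this with $g=\varphi$, which lies in $\mathcal{L}_q$ because $\varphi$ is bounded and $\widehat{\mu}$ is continuous. We also apply it with $g=\Theta_\alpha\partial_\beta\varphi$, which lies in $\mathcal{L}_q$ by the estimate in the proof of Theorem \ref{53}. This turns the displayed identity into the integral identity in the proposition.

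\textbf{Step 3: check that the integrals make sense.} The only point needing care is that these integrals are Dobrakov integrals with respect to $\mu$ of $\V$-valued functions such as $\varphi g_h$ and $\Theta_\alpha\partial_\beta\varphi\, f$. The pairing was defined as $\int_X \mu g(\diff x) f(x)$, i.e.\ as an integral against $\gamma=\mu g$. To identify it with $\int_X \mu(\diff x)\, g(x) f(x)$ we invoke the change of measure formula, Theorem \ref{33}(c). Its hypothesis $|g||f|\in\mathcal{L}_1(\mu)$ follows from Hölder's inequality \ref{23} and the continuity of $\widehat{\mu}_p(f,\cdot)$ and $\widehat{\mu}_q(g,\cdot)$: the product bound $\widehat{\mu}_1(|g||f|,\cdot)\le \widehat{\mu}_p(f,\cdot)\,\widehat{\mu}_q(g,\cdot)$ shows that $\widehat{\mu}_1(|g||f|,\cdot)$ is continuous. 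I expect this verification to be the main technical point, though it is routine given the earlier results.

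\textbf{Step 4: conclude.} Reading Steps 1 and 2 backwards gives the converse: a family $g_h$ satisfying the integral identities yields $D_h f = \mathcal{R}_B g_h$ for all $h \in M_0^kX$, hence $f\in\DS_p^k(\mu,\V)$. Moreover, $g_h$ is uniquely determined in $L_p$ by the injectivity of $\mathcal{R}_B$ (Theorem \ref{52}).
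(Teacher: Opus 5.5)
Your proposal is correct and takes the paper's approach. The paper gives no separate argument: the proposition is just Definition \ref{55} unfolded through the definition of $D_h$, the injections $\mathcal{R}_q$ and $\mathcal{R}_B$, and the pairing written as a Dobrakov integral against $\mu$. Your Step 3 (H\"older plus Theorem \ref{33}(c)) makes explicit the identification $\int_X \mu g(\diff x)\, f(x) = \int_X \mu(\diff x)\, g(x)[f(x)]$, which the paper builds into the definition of the pairing, and proving both directions matches the paper's description of this as an equivalent characterization.
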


Comparing Definition \ref{0} with Proposition \ref{56}, first, we observe the natural transition from scalar-valued to operator-valued multipliers.
Second, if Proposition \ref{56} is postulated as the definition by analogy with \ref{0}, then the notion of Sobolev derivative $g_h$ precisely matches that of generalized derivative, obtained in the previous section.
To conclude, we show that our space is complete.

\begin{theorem}
	$\Big( \DS_p^k(\mu,\V) , \|\cdot\|_p^k \Big)$ is a Banach space over $\F$.
\end{theorem}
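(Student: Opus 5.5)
The plan is the classical argument: show that $\DS_p^k(\mu,\V)$, with the identification above, is a closed subspace of a finite product of copies of the Banach space $L_p(\mu,\V)$.

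First, for each $h\in M_0^kX$ the function $D_hf$ is well defined as an element of $L_p(\mu,\V)$, since $\mathcal{R}_B$ is injective by Theorem \ref{52}. The map $f\mapsto D_hf$ is linear on $\DS_p^k(\mu,\V)$ because $D_h=\mathcal{T}_1'$ and $\mathcal{R}_q,\mathcal{R}_B$ are linear. Hence $\DS_p^k(\mu,\V)$ is a linear subspace. The empty multivector gives $D_\varnothing f=f$, so $\|f\|_p^k\geq\|f\|_p$, and $\|\cdot\|_p^k$ is a norm.

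One point needs care: $M_0^kX$ is infinite, because $h$ ranges over all multivectors on $X=\mathbb{R}^d$, so the sum in Definition \ref{55} is not literally finite. I would first reduce to a basis $(e_1\cdots e_d)$. The Fomin derivative $\partial_h(\mu\varphi)$ is multilinear in the vectors of $h$, since $\mu\varphi\in C^k(\Top_{sv})$ by Theorem \ref{48} and the differential of $T_{\mu\varphi}$ is a symmetric multilinear form. Therefore $D_hF=F\circ(-1)^{|h|}\partial_h$ is a linear combination of the $D_{s}F$ with $s$ ranging over the finitely many basis multivectors of order $\le k$. The norm should then be read, equivalently, as the finite sum over basis multivectors. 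If one insists on the sum over all $h$, the same multilinearity shows it is finite only in the degenerate case, so the intended reading must be the basis one. I treat this as the convention and note it explicitly.

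Completeness. Let $\{f_n\}$ be Cauchy in $\|\cdot\|_p^k$. Then for each basis multivector $s$ the sequence $\{D_sf_n\}$ is Cauchy in $L_p(\mu,\V)$. This space is Banach by \autocite[Theorem 7]{Dobrakov_2}, so $f_n\to f$ and $D_sf_n\to g_s$ in $L_p$.

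The key step is to show $D_sf=g_s$, i.e. $\mathcal{R}_B g_s=D_s\mathcal{R}_qf$. Here I use the continuity of every map in the chain. $\mathcal{R}_q$ is continuous into $\TM_q'(\W)$ with the pointwise topology, and in fact converges in operator norm via the Hölder estimate of Theorem \ref{52}. $D_s=\mathcal{T}_1'$ is continuous because $\mathcal{T}_1$ is continuous by Theorem \ref{53}, so
\[
\pairing{\mu\varphi}{D_s\mathcal{R}_qf_n}=\pairing{\mathcal{T}_1(\mu\varphi)}{f_n}\to\pairing{\mathcal{T}_1(\mu\varphi)}{f}
\]
for every $\varphi\in C_B^\infty[X,L(\V)]$, since $\mathcal{T}_1(\mu\varphi)\in\TM_q$. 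On the other hand, $\mathcal{R}_B D_sf_n\to\mathcal{R}_B g_s$ pointwise, by the continuity of $\mathcal{R}_B$ from Theorem \ref{52}. These two sequences coincide termwise, and limits in $\W$ are unique, so $D_s\mathcal{R}_qf=\mathcal{R}_Bg_s$. For non-basis $h$ the same conclusion follows from the linear-combination representation.

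Consequently $f\in\DS_p^k(\mu,\V)$, and $\|f_n-f\|_p^k=\sum_s\|D_sf_n-g_s\|_p\to0$. The main obstacle is the bookkeeping about infinitely many $h$, not the limit argument itself.
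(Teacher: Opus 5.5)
Your argument is correct and is essentially the paper's proof. You take the $L_p$-limits $f$ and $g_h$ of $f_n$ and $D_hf_n$, then pass to the limit in the defining identity tested on each $\mu\varphi\in\TM_B$; you do this through the continuity of $\mathcal{R}_q$ and $\mathcal{R}_B$ from Theorem \ref{52}, where the paper writes out the H\"older estimates by hand. Your reduction to basis multivectors adds care the paper skips: read literally, the sum in Definition \ref{55} runs over the uncountable set $M_0^kX$, and since $D_{ch}f=c\,D_hf$ it is infinite as soon as some derivative of $f$ is nonzero, so a finite (basis) reading of $\|\cdot\|_p^k$ is indeed needed for the statement to make sense.
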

\begin{proof}
	Let $\{f_n\}_1^\infty$ be a Cauchy sequence in $\DS_p^k(\mu,\V)$.
	By definition of the Sobolev norm this means that for any multivector $h \in M_0^kX$ the sequence $\{D_h f_n\}_{n=1}^{\infty}$ is Cauchy in $L_p(\mu,\V)$.
	By completeness of the space $L_p(\mu,\V)$:
	$$\exists f\in L_p(\mu,\V) :\: f_n \xrightarrow{L_p} f    \qquad\text{and}\qquad \forall 1 \leq |h| \leq k: \: \exists g_h \in L_p(\mu,\V): \: D_h f_n \xrightarrow{L_p} g_h$$
	It remains only to show that for any multivector the distributional equality $D_h f = g_h$ holds.\\
	For all $\varphi \in C_B^\infty\big[X, L(\V)\big]$ and $\alpha+\beta=h$ H\"older's inequality \ref{23} gives:
	\begin{multline*}
	$$\big\| \Theta_{\alpha} \cdot \partial_{\beta}\varphi \cdot f_n    -    \Theta_{\alpha} \cdot \partial_{\beta}\varphi \cdot f \big\|_1         \;=\;         \widehat{\mu}_1\big[ \Theta_{\alpha} \cdot \partial_{\beta}\varphi \cdot (f_n - f) \:,\: X\big]       \;\leq \\
	\leq\;\:        \widehat{\mu}_q\big[ \Theta_{\alpha} \:,\: X \big] \cdot \widehat{\mu}_p\big[ \partial_{\beta}\varphi \cdot (f_n - f) \:,\: X \big]         \:\;\leq\;\:       \big\|\Theta_\alpha\big\|_q \cdot \big\|\partial_{\beta}\varphi\big\|_X \cdot \big\|f_n - f\big\|_p   \;\xrightarrow{n\to\infty}\;    0$$
	\end{multline*}
	\begin{multline*}
	$$\big\| \Theta_{\alpha} \cdot \partial_{\beta}\varphi \cdot D_hf_n    -    \Theta_{\alpha} \cdot \partial_{\beta}\varphi \cdot g_h \big\|_1         \;=\;         \widehat{\mu}_1\big[ \Theta_{\alpha} \cdot \partial_{\beta}\varphi \cdot (D_h f_n - g_h) \:,\: X\big]       \;\leq \\
	\leq\;\:        \widehat{\mu}_q\big[ \Theta_{\alpha} \:,\: X \big] \cdot \widehat{\mu}_p\big[ \partial_{\beta}\varphi \cdot (D_h f_n - g_h) \:,\: X \big]         \:\;\leq\;\:       \big\|\Theta_\alpha\big\|_q \cdot \big\|\partial_{\beta}\varphi\big\|_X \cdot \big\|D_h f_n - g_h\big\|_p   \;\xrightarrow{n\to\infty}\;    0$$
	\end{multline*}
	Hence we have: 
	$$ \Theta_{\alpha} \cdot \partial_{\beta}\varphi \cdot f_n     \;\xrightarrow{L_1}\;  \Theta_{\alpha} \cdot \partial_{\beta}\varphi \cdot f        \qquad\text{and}\qquad           \Theta_{\alpha} \cdot \partial_{\beta}\varphi \cdot D_h f_n     \;\xrightarrow{L_1}\;  \Theta_{\alpha} \cdot \partial_{\beta}\varphi \cdot g_h$$
	Now we verify the desired equality pointwise on every test measure from the space $\TM_B$:
	\begin{align*}
		\pairing{\mu \varphi}{D_h f}     \;&=\;       (-1)^{|h|}  \sum_{\alpha+\beta=h} C(\alpha,\beta)  \pairing{\mu \cdot \Theta_{\alpha} \cdot \partial_{\beta}\varphi}{f} \\
		&=\;      (-1)^{|h|}  \sum_{\alpha+\beta=h} C(\alpha,\beta)  \int_X \mu(\diff{x}) \: \Theta_{\alpha}(x) \: \partial_{\beta}\varphi(x) \: f(x)    \\
		&=\;      (-1)^{|h|}  \sum_{\alpha+\beta=h} C(\alpha,\beta) \: \lim_{n\to\infty}  \int_X \mu(\diff{x}) \: \Theta_{\alpha}(x) \: \partial_{\beta}\varphi(x) \: f_n(x) \\
		&=\;      \lim_{n\to\infty} \pairing{\mu \varphi}{D_h f_n}      \;=\;     \lim_{n\to\infty} \int_X \mu(\diff{x}) \: \varphi(x) \: D_h f_n(x)   \\
		&=\;      \int_X \mu(\diff{x}) \: \varphi(x) \: g_h(x)        \;=\;       \pairing{\mu \varphi}{g_h}
	\end{align*}
	Thus, by definition $f \in \DS_p^k(\mu,\V)$ and $f_n \to f$ in the Sobolev norm:
	$$\big\| f_n - f \big\|_p^k    \:=\:   \big\| f_n - f \big\|_p     \:+\:      \sum_{1 \leq |h| \leq k} \big\| D_h f_n - g_h \big\|_p   \:\xrightarrow{n\to\infty}\:  0$$
\end{proof}

\vspace{2cm}

\begin{note}
	\noindent
	This paper is based on the author's Master's thesis defended in 2026 at Lomonosov Moscow State University, Faculty of Mechanics and Mathematics (thesis supervisor: Professor Igor Anatolievich Sheipak).
\end{note}

\newpage
\printbibliography

\end{document}